\newtheorem{theorem}{Theorem}[section]
\newtheorem{corollary}[theorem]{Corollary}
\newtheorem{proposition}[theorem]{Proposition}
\newtheorem{lemma}[theorem]{Lemma}
\theoremstyle{definition}    
\theoremstyle{remark}
\newtheorem{remark}[theorem]{Remark}
\newtheorem{example}[theorem]{Example}
\newcommand{\pt}{\operatorname{pt}}
\newcommand\A{\mathcal{A}}
\newcommand{\V}{\mathcal{V}}
\renewcommand{\L}{\mathcal{L}}
\renewcommand{\O}{\mathcal{O}}
\newcommand{\Co}{\mathcal{C}}
\newcommand{\U}{\on{U}}
\newcommand{\R}{\mathbb{R}}
\newcommand{\C}{\mathbb{C}}
\newcommand{\Z}{\mathbb{Z}}
\newcommand\lie[1]{\mathfrak{#1}}
\renewcommand{\k}{\lie{k}}
\newcommand{\h}{\lie{h}}
\newcommand{\g}{\lie{g}}
\renewcommand{\a}{\mathsf{a}}
\newcommand{\on}{\operatorname}
\newcommand{\Ad}{ \on{Ad} }
\renewcommand{\ker}{ \on{ker}}
\newcommand{\Cour}[1]      {[\![#1]\!]}
\newcommand\qu{/\kern-.7ex/} 
\renewcommand{\d}{{\mbox{d}}}
\newcommand\sig{\sigma}
\newcommand\Om{\Omega}
\newcommand\om{\omega}
\renewcommand{\deg}{\on{deg}}
\newcommand{\f}{\frac}
\newcommand{\p}{\partial}
\renewcommand{\l}{\langle}
\renewcommand{\r}{\rangle}
\newcommand\hh{{\f{1}{2}}}
\newcommand{\ti}{\tilde}
\newcommand{\eeq}{\end{eqnarray*}}
\newcommand{\beq}{\begin{eqnarray*}}
\newcommand{\pr}{\on{pr}}
\newcommand{\wh}{\widehat}
\newcommand{\mf}{\mathfrak}
\begin{document}

\title[]{The Atiyah algebroid\\ of the path fibration over
a Lie group}

\author{A. Alekseev}
\address{University of Geneva, Section of Mathematics,
2-4 rue du Li\`evre, c.p. 64, 1211 Gen\`eve 4, Switzerland}
\email{Anton.Alekseev@unige.ch}

\author{E. Meinrenken}
\address{University of Toronto, Department of Mathematics,
40 St George Street, Toronto, Ontario M4S2E4, Canada }
\email{mein@math.toronto.edu}

\begin{abstract}
  Let $G$ be a connected Lie group, $LG$ its loop group, and
  $\pi\colon PG\to G$ the principal $LG$-bundle defined by
  quasi-periodic paths in $G$.  This paper is devoted to differential
  geometry of the Atiyah algebroid $A=T(PG)/LG$ of this bundle. Given
  a symmetric bilinear form on $\g$ and the corresponding central
  extension of $L\g$, we consider the lifting problem for $A$, and
  show how the cohomology class of the Cartan 3-form $\eta\in\Om^3(G)$
  arises as an obstruction. This involves the construction of a 2-form
  $\varpi\in \Om^2(PG)^{LG}=\Gamma(\wedge^2 A^*)$ with
  $\d\varpi=\pi^*\eta$. In the second part of this paper we obtain
  similar $LG$-invariant primitives for the higher degree analogues of the
  form $\eta$, and for their $G$-equivariant extensions.
\end{abstract}
\maketitle
\setcounter{tocdepth}{3}
\tableofcontents
\section{Introduction}
Let $G$ be a connected Lie group with loop group $LG$. Denote by
$\pi\colon PG\to G$
the principal $LG$-bundle, given by the set of `quasi-periodic' paths in $G$. 
Thus $\gamma\in C^\infty(\R,G)$ belongs to the fiber $(PG)_g$ if 
it has the property,
\[ \gamma(t+1)=g\gamma(t)\]
for all $t$. The principal action of $\lambda\in LG$ reads 
$(\lambda.\gamma)(t)=\gamma(t)\lambda(t)^{-1}$; it commutes with the
action of $a\in G$ given as $(a.\gamma)(t)=a\gamma(t)$. 

We are interested in the differential geometry of the
infinite-dimensional space $PG\to G$. Since all of our considerations
will be $LG$-equivariant, it is convenient to phrase this discussion
in terms of the \emph{Atiyah algebroid} $A=T(PG)/LG\to G$. As
explained below, the fiber of $A$ at $g\in G$ consists of paths
$\xi\in C^\infty(\R,\g)$ such that $\xi(t+1)-\Ad_g\xi(t)=:v_\xi$ is constant.
We may directly write down the Lie algebroid bracket on sections
of $A$, thus avoiding a discussion of Lie brackets of vector fields on
infinite dimensional spaces. The Lie algebra bundle $L\subset A$,
given as the kernel of the anchor map, has fibers the \emph{twisted
  loop algebras} defined by the condition $\xi(t+1)=\Ad_g\xi(t)$.

An invariant symmetric bilinear form on $\g$ defines a central
extension $\wh{L}\to L$ by the trivial bundle $G\times\R$. One may
then ask for a lift $\wh{A}\to A$ of the Atiyah algebroid to this
central extension. More generally, we will study a similar lifting
problem for any transitive Lie algebroid $A$ over a manifold $M$.
We will show that the choice of a connection on $A$, together with a
`splitting', define an element $\varpi\in \Gamma(\wedge^2 A^*)$ whose
Lie algebroid differential is basic. The latter defines a closed
3-form $\eta\in\Om^3(M)$, whose cohomology class turns out to be the
obstruction to the lifting problem. In the case of the Atiyah
algebroid over $G$, with suitable choice of connection, $\eta$ is the
Cartan 3-form, while $\varpi$ is explicitly given as 
\[ \varpi(\xi,\zeta)=
\int_0^1 \dot\xi\cdot \zeta-\hh v_\xi\cdot v_\zeta-\Ad_g\xi(0)\cdot
v_\zeta
\]
for $\xi,\zeta\in\Gamma(A)$. Similarly, the obstruction for the
$G$-equivariant lifting problem is the equivariant Cartan 3-form
$\eta_G$, while it turns out that $\varpi_G=\varpi$. Note that 
$\varpi$ may be viewed as a $G\times LG$-invariant 2-form on $PG$. 

The second part of this paper is devoted to `higher analogues' of the 
equations $\d\varpi=\pi^*\eta$, respectively $\d_G\varpi=\pi^*\eta_G$. For any 
invariant polynomial $p\in (S\g^*)^G$ of homogeneous degree $k$, 
the Bott-Shulman construction \cite{bo:le,je:gr,shu:th} defines closed forms 
\[ \eta^p\in \Om^{2d-1}(G),\ \ \eta^p_G\in\Om^{2d-1}_G(G).\] 
These become exact if pulled back to elements of $ \Gamma(\wedge A^*)$, 
and we will construct explicit primitives 
\[ \varpi^p\in \Gamma(\wedge^{2d-2} A^*),\ \ \varpi^p_G\in
\Gamma_G(\wedge^{2d-2} A^*).\]
which may be viewed as $LG$-invariant differential forms on $PG$. We
stress that while the existence of primitives of $\pi^*\eta^p,\ 
\pi^*\eta^p_G$ is fairly obvious, the existence of
\emph{$LG$-invariant} primitives is less evident. Pulling $\varpi^p$
back to the fiber over the identity $LG=(PG)_e$, one recovers the
closed invariant forms on the loop group $LG$ discussed in
Pressley-Segal \cite{pr:lo}.

\vskip 0.2cm

{\bf Acknowledgments:}  Research of A.A. was supported in part by 
the grants 200020-120042 and 200020-121675 of the Swiss National Science Foundation.
E.M. was supported by an NSERC Discovery Grant and a Steacie Fellowship.

\section{Review of transitive Lie algebroids}\label{sec:trans}
In this Sections we collect some basic facts about connections and
curvature on transitive Lie algebroids. Most of this material is due
to Mackenzie, and we refer to his book \cite{mac:gen} or to the
lecture notes by Crainic-Fernandes \cite{cra:lect} for further
details. 

\subsection{Lie algebroids}
A Lie algebroid is a smooth vector bundle $A\to N$, with a Lie bracket
on the space of sections $\Gamma(A)$ and an \emph{anchor map}
$\a\colon A\to TN$ satisfying the Leibniz rule,
$[\xi_1,f\xi_2]_A=f[\xi_1,\xi_2]_A+\a(\xi_1)(f)\,\xi_2$. This implies
that $\a$ induces a Lie algebra homomorphism on sections. An example
of a Lie algebroid is the Atiyah algebroid $TP/H$ of a principal
$H$-bundle $P\to N$, where $\Gamma(TP/H)=\mf{X}(P)^H$ with the usual
bracket of vector fields. A \emph{representation} of a Lie algebroid
$A$ on a vector bundle $\V\to N$ is given by a \emph{flat
  $A$-connection} on $\V$, i.e. by a $C^\infty(N)$-linear Lie algebra
homomorphism $\Gamma(A)\to \on{End}(\Gamma(\V)),\ \xi\mapsto
\nabla_\xi$ satisfying the Leibnitz
rule, $\nabla_\xi(f\sig)=f\nabla_\xi\sig+\a(\xi)(f)\sig$. Given additional structure on $\V$
one can ask for the representation to preserve that structure: For
instance, if $\V=L$ is a bundle of Lie algebras, one would impose that
$\nabla_\xi$ acts by derivations of the bracket $[\cdot,\cdot]_L$.
Tensor products and direct sums of $A$-representations are defined in
the obvious way. The \emph{trivial} $A$-representation is the bundle
$\V=N\times\R$ with $\nabla_\xi=\a(\xi)$ given by the anchor map.

Suppose $\V$ is an $A$-representation, and consider the graded
$\Gamma(\wedge A^*)$-module $\Gamma(\wedge A^*\otimes\V)$.
Generalizing from $A=TM$, we will think of the sections of $\wedge A^*
\otimes \V$ as \emph{$\V$-valued forms} on $A$. 
For $\xi\in\Gamma(A)$, the \emph{Lie derivatives} $\L_\xi$ are the
operators of degree $0$ on $\Gamma(\wedge A^*\otimes\V)$,
defined inductively by 
\[ \iota_\zeta\circ
\L_\xi=\L_\xi\circ\iota_\zeta-\iota_{[\xi,\zeta]_A},\ \ \ \zeta\in
\Gamma(A),\ \]
with $\L_\xi\sig=\nabla_\xi\sig$ for $\sig\in \Gamma(\V)$. Here
$\iota_\xi$ are the operators of \emph{contraction} by $\xi$. 
Similarly, $\d$ is the operator of degree $1$ on $\Gamma(\wedge A^*\otimes\V)$
defined by Cartan's identity $\iota_\xi\circ \d=\L_\xi-\d\circ\iota_\xi$. The operators
$\iota_\xi,\L_\xi,\d$ satisfy the usual commutation relations of
contractions, Lie derivative and differential. In particular, $\d$
squares to zero.


\subsection{Transitive Lie algebroids}
A Lie algebroid $A$ over $N$ is called \emph{transitive} if its anchor
map $\a\colon A\to TN$ is surjective. In that case, the kernel of the
anchor map is a bundle $L\to N$ of Lie algebras, 
and we have the exact sequence of Lie algebroids,
\begin{equation}\label{eq:transalg} 
0\to L\to A\to TN\to 0.\end{equation}
The \emph{structure Lie algebra bundle} $L$ carries an
$A$-representation $\nabla_\xi\zeta=[\xi,\zeta]$ ($\xi\in\Gamma(A),\ 
\zeta\in\Gamma(L)$) by derivations of the Lie bracket.
\begin{example}\label{ex:prin}
  The Atiyah algebroid $A=TP/H$ of a principal bundle is a transitive
  Lie algebroid, with $L$ the associated bundle of Lie algebras
  $L=P\times_H \h$. The bracket on $\Gamma(A)$ is given by its
  identification with $H$-invariant vector fields on $P$. The induced
  bracket on $\Gamma(L)$ is \emph{minus} the pointwise bracket on
  $C^\infty(P,\h)^H\cong \Gamma(L)$.
\end{example}

The dual $\a^*\colon T^*N\to
A^*$ of the anchor map extends to the exterior algebras. 
Given an $A$-representation $\V$,  it hence gives
an injective map $\a^*\colon \wedge T^*N\otimes\V\to \wedge A^*\otimes
\V$, defining a map on sections, 
\[ \a^*\colon \Om(N,\V)\cong \Gamma(\wedge T^*N\otimes \V)\to \Gamma(\wedge A^*\otimes
\V).\]
The image of this map is the \emph{horizontal subspace} $\Gamma(\wedge
A^*\otimes \V)_{\on{hor}}$, consisting of sections $\phi$ satisfying
$\iota_\xi\phi=0$ for all $\xi\in \Gamma(L)$. We will often view
$\Om(N,\V)$ as a subspace of $\Gamma(\wedge A^*\otimes \V)$, without
always spelling out the inclusion map $\a^*$. The \emph{basic
  subcomplex} $\Gamma(\wedge A^*\otimes \V)_{\on{basic}}$ is the
subspace of horizontal sections satisfying $\L_\xi\phi=0$ for all
$\xi\in \Gamma(L)$; it is stable under the
differential $\d$.
\begin{lemma}  \label{lemma}
Suppose that the $A$-connection on $\V$ descends to a flat $TN=A/L$-connection, 
i.e. that $\nabla_\xi=0$ for $\xi\in \Gamma(L)$. Then 
\[\Gamma(\wedge A^*,\V)_{\on{basic}}\cong  \Gamma(\wedge
A^*,\V)_{\on{hor}}\cong \Om(N,\V).\] 
\end{lemma}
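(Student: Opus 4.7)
The plan is the following. The second isomorphism $\Gamma(\wedge A^*\otimes\V)_{\on{hor}}\cong \Om(N,\V)$ is essentially formal: since $A$ is transitive, $\a$ is surjective, hence $\a^*\colon \wedge T^*N\otimes\V\to\wedge A^*\otimes\V$ is fiberwise injective, and by definition the horizontal subspace is precisely its image. So the real content is the first isomorphism, which after noting the evident inclusion $\Gamma(\wedge A^*\otimes\V)_{\on{basic}}\subset \Gamma(\wedge A^*\otimes\V)_{\on{hor}}$ reduces to showing that every horizontal section is automatically $L$-invariant: $\L_\xi \phi=0$ for all $\xi\in\Gamma(L)$.

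Two ingredients will drive the argument. First, $L\subset A$ is an ideal for the algebroid bracket, because for $\xi\in\Gamma(L)$ and $\zeta\in\Gamma(A)$ we have $\a([\xi,\zeta]_A)=[\a(\xi),\a(\zeta)]=0$, whence $[\xi,\zeta]_A\in\Gamma(L)$. Second, the inductive definition $\iota_\zeta\circ \L_\xi=\L_\xi\circ\iota_\zeta-\iota_{[\xi,\zeta]_A}$ together with $\L_\xi\sig=\nabla_\xi\sig$ for $\sig\in\Gamma(\V)$ yields, by induction on the form degree, the explicit Cartan-type formula
\[ (\L_\xi\phi)(\zeta_1,\ldots,\zeta_k)=\nabla_\xi\bigl(\phi(\zeta_1,\ldots,\zeta_k)\bigr)-\sum_{i=1}^{k} \phi\bigl(\zeta_1,\ldots,[\xi,\zeta_i]_A,\ldots,\zeta_k\bigr)\]
for $\phi\in\Gamma(\wedge^k A^*\otimes\V)$ and $\zeta_1,\ldots,\zeta_k\in\Gamma(A)$.

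With these in hand, take $\xi\in\Gamma(L)$ and $\phi\in\Gamma(\wedge A^*\otimes\V)_{\on{hor}}$. The hypothesis that $\nabla$ descends to $TN=A/L$ gives $\nabla_\xi=0$, killing the first term on the right. By the ideal property, each $[\xi,\zeta_i]_A$ lies in $\Gamma(L)$, and horizontality of $\phi$ then makes every summand in the second term vanish as well. Hence $\L_\xi\phi=0$, so $\phi$ is basic, completing the proof.

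The only real obstacle is the bookkeeping needed to extract the explicit Cartan formula for $\L_\xi$ from the paper's inductive definition; once that is on the table the conclusion is immediate, being a simultaneous application of the ideal property of $L$ and the hypothesis $\nabla|_{\Gamma(L)}=0$.
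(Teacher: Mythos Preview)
Your proof is correct and follows essentially the same route as the paper's. Both arguments rest on the two ingredients you name: $\nabla_\xi=0$ for $\xi\in\Gamma(L)$, and the ideal property $[\Gamma(L),\Gamma(A)]_A\subset\Gamma(L)$. The only cosmetic difference is that the paper keeps the induction on degree $k$ running via the defining relation $\iota_\zeta\L_\xi=\L_\xi\iota_\zeta-\iota_{[\xi,\zeta]_A}$ (so $\iota_\zeta\L_\xi\phi=\L_\xi\iota_\zeta\phi-\iota_{[\xi,\zeta]_A}\phi=0$ by induction, horizontality, and the ideal property), whereas you first unwind that induction into the explicit Cartan formula and then apply it in one stroke.
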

\begin{proof}
  Let $\xi\in \Gamma(L)$ so that $\a(\xi)=0$, and let $\phi\in
  \Gamma(\wedge^k A^*,\V)_{\on{hor}}$. We will show $\L_\xi\phi=0$ by
  induction on $k$. If $k=0$, we have $\L_\xi\phi=\nabla_\xi\phi=0$ by
  assumption. If $k>0$, the induction hypothesis shows that for all $\zeta\in
  \Gamma(A)$, $\iota_\zeta \L_\xi\phi=\L_\xi
  \iota_\zeta\phi-\iota_{[\xi,\zeta]_A}\phi=0$, hence $\L_\xi\phi=0$.
  Here we used that $\Gamma(\wedge A^*,\V)_{\on{hor}}$ is stable under
  $\iota_\zeta$ and that $[\xi,\zeta]_A\in \Gamma(L)$.
\end{proof}

\begin{remark}
  Lemma \ref{lemma} applies in particular to the trivial $A$-representation
  $\V=N\times\R$. Thus $\Gamma(\wedge A^*)_{\on{basic}}\cong \Om(N)$.
  For general $A$-representations the space
  $\Gamma(\wedge A^*\otimes\V)_{\on{basic}}$ can be strictly smaller
  than $\Om(N,\V)$.  For instance, if $N=\pt$, so that $A=\k$ is a Lie
  algebra and $\V=V$ is a $\k$-representation, the space
  $\Gamma(\wedge A^*\otimes\V)_{\on{basic}}=V^\k$ is the space of
  $\k$-invariants, while $\Om(N,\V)=V$.
\end{remark}

A \emph{connection} on a transitive Lie algebroid is a left
splitting $\theta\colon A\to L$ of the exact sequence
\eqref{eq:transalg}. The corresponding right splitting
$\on{Hor}^\theta\colon TN\to A$ is called the \emph{horizontal lift}.
Dually, the connection defines a horizontal projection 
\[ \on{Hor}^\theta_*\colon \Gamma(\wedge A^*\otimes \V)\to
\Gamma(\wedge A^*\otimes \V)_{\on{hor}}.\]
One defines the \emph{covariant derivative} by
$\d^\theta=\on{Hor}^\theta_*\circ \d$, and the  
\emph{curvature} of $\theta$ is given as
\footnote{The minus sign in this formula is consistent with Example
  \ref{ex:prin}.}
\begin{equation}\label{eq:curv}
 F^\theta=\d^\theta\theta=\d\theta-\hh
 [\theta,\theta]_A\in 
\Gamma(\wedge^2 A^*\otimes
 L)_{\on{hor}}
\end{equation}
%
%
%
%
%
%

\subsection{Pull-backs}
We recall the notion of \emph{pull-back Lie algebroids}, due to
Higgins-Mackenzie \cite{hig:alg}, for the special case of transitive Lie algebroids.
Suppose $A\to N$ is a transitive Lie algebroid, and $\Phi\colon M\to
N$ is a smooth map. Let $\Phi^! A\to M$ be the bundle \footnote{We
  remark that our use of the notation $\Phi^!$ is different from that
  in the book \cite{mac:gen}.}  over $M$, defined by the fiber product
diagram
\[ \begin{CD} \Phi^! A @>>> A\\
@VVV @VV{a}V\\
TM @>>{d\Phi}> TN
\end{CD}\]
That is, $\Phi^!A=(d\Phi)^*A$ if $A$ is viewed as a bundle over $TN$. 
Then $\Phi^! A$ carries a natural structure of a transitive Lie
algebroid, with the left vertical map $\Phi^! A\to TM$ as the anchor
map, and the upper horizontal map is a morphism of Lie algebroids.  

We refer to $\Phi^! A$ as the pull-back of $A$ by the map $\Phi$.  It
is a pull-back in the category of Lie algebroids, not to be confused
with the pull-back $\Phi^* A$ of $A$ as a vector bundle. For instance,
taking $A=TN$ one has $\Phi^! TN=TM\not= \Phi^*TN$.  Note that if
$A=TP/H$ is the Atiyah algebroid of a principal $H$-bundle $P\to N$,
then $\Phi^! A=T(\Phi^*P)/H$ is the Atiyah algebroid of the pull-back
principal bundle.

The kernel of the anchor map of $\Phi^! A$ is $\Phi^*L$, the usual
pull-back as a bundle of Lie algebras. For any $A$-representation
$\V$, the pull-back $\Phi^*\V$ inherits a $\Phi^! A$-representation,
and there is a natural cochain map $\Phi^!\colon \Gamma(\wedge
A^*\otimes\V) \to \Gamma(\wedge(\Phi^! A)\otimes\Phi^*\V)$.  Given a
connection $\theta\colon A\to L$, the pull-back algebroid inherits a
pull-back connection $\Phi^! \theta\colon \Phi^! A\to \Phi^*L$. The
curvature of the pull-back connection is $F^{\Phi^!\theta}=\Phi^!
F^\theta$.

\subsection{Equivariant transitive Lie algebroids}\label{subsec:equiv}
Suppose $G$ is a Lie group acting on $A\to N$ by Lie algebroid
homomorphisms. By \emph{infinitesimal generators} for the action we
mean a $G$-equivariant map
\begin{equation}\label{eq:infaction}
 \g\to \Gamma(A),\ x\mapsto x_A\end{equation}
with the property $[x_A,\xi]=\f{\p}{\p u}\Big|_{u=0}\exp(u x).\xi$.
It is then automatic that \eqref{eq:infaction} is a Lie algebra
homomorphism.
For any $G$-equivariant $A$-representation $\V$, the complex $\wedge
A^*\otimes\V$ becomes a $G$-differential space (cf.
\cite{gu:sy,bru:eq}), with contraction operators
$\iota_x=\iota_{x_A}$. One may hence introduce the
equivariant complex
\[ \Gamma_G(\wedge A^*\otimes\V):=(S\g^*\otimes \Gamma(\wedge A^*\otimes\V))^{G}\]
with differential $\d_G=1\otimes \d-\sum_j e^j\otimes  \iota_{e_j}$ for a basis $e_j$ of $\g$,
with dual basis $e^j$ of $\g^*$. For $A=TN$ this complex is denoted
$\Om_G(N,\V)$. Replacing $\d$ with $\d_G$ in the discussion above, one
may introduce equivariant curvatures $F^\theta_G$ for $G$-invariant connections on
$A$:
\[ F^\theta_G=\d_G\theta- \hh [\theta,\theta]_A=F^\theta-\Psi\in 
\Gamma_G(\wedge^2 A^*\otimes L)_{\on{hor}}.\] 
Here $\Psi(x)=\iota_x\theta\in \Gamma(\wedge^0 A^*\otimes L)$ for $x\in\g$.

\section{A lifting problem for transitive Lie algebroids}\label{sec:lifting}
Let $H$ be a Lie group, and $\pi\colon P\to N$ a smooth principal
$H$-bundle. Given a central extension $\wh{H}\to H$ of the structure
group by $\U(1)$, it is not always possible to lift $P$ to a principal
$\wh{H}$-bundle. As is well-known, the obstruction class is an element
of $H^3(N,\Z)$. A construction of Brylinski \cite{br:lo} gives an
explicit de Rham representative of the image of this class in
$H^3(N,\R)$. In this Section, we will develop the analogue of
Brylinski's theory for transitive Lie algebroids.
\subsection{The lifting problem}
Let $A\to N$ be a transitive Lie algebroid with anchor map $\a\colon
A\to TN$, and with structure Lie algebra bundle $L=\on{ker}(\a)$. Suppose
that
\begin{equation}\label{eq:lieseq} 
0\to N\times \R \to \wh{L}\xrightarrow{p} L\to 0\end{equation}
is a central extension, where $\wh{L}$ carries an $A$-representation
(by derivations of the Lie bracket on sections), lifting that on $L$.
The lifting problem is to find a central extension of Lie algebroids
\begin{equation}\label{eq:lift} 0\to N\times \R \to \wh{A}\xrightarrow{p} A\to 0\end{equation}
such that $\wh{L}$ is realized the kernel of the anchor map $\wh{A}\to
TN$. We may also consider the lifting problem for a given connection
$\theta\colon A\to L$, where we declare that $(\wh{A},\wh{\theta})$ lifts $(A,\theta)$
if $\wh{A}$ lifts $A$ and $p\circ \wh{\theta}=\theta\circ p$.
\begin{example}[Principal bundles I]
\label{ex:princ}
In the principal bundle case, $A=TP/H$ is the Atiyah algebroid,
$L=P\times_H\h$, and one obtains a lifting problem $\wh{L}=P\times_H
\wh{\h}$ for any given central extension $0\to \R\to \wh{\h}\to \h\to
0$ of Lie algebras. Suppose these integrate to an exact sequence $1\to
\U(1)\to \wh{H}\to H\to 1$ on the group level. Then for any principal
$\wh{H}$-bundle $\wh{P}$ lifting $P$, its Atiyah algebroid
$\wh{A}=T\wh{P}/\wh{H}$ is a lift of $A$ in the above sense.
\end{example}
\subsection{Splittings}
The set of splittings $j\colon L\to \wh{L}$ of the exact sequence
\eqref{eq:lieseq} is an affine space, with underlying vector space
$\Gamma(L^*)$. Any splitting $j$ determines a cocycle $\sig\in
\Gamma(\wedge^2 L^*)$, where
\[ \sig(\xi,\zeta)=j([\xi,\zeta]_L)-[j(\xi),j(\zeta)]_{\wh{L}},\ \ \ 
\xi,\zeta\in\Gamma(L).\]
(The right hand side lies in the kernel of $p$, hence it
takes values in the trivial bundle $N\times\R\subset \wh{L}$.) The
bracket on $\wh{L}$ is given in terms of this cocycle as
\begin{equation}\label{eq:lbracket}
 [\hat\xi,\hat\zeta]_{\hat L}=j([\xi,\zeta]_L)-\sig(\xi,\zeta) 
\end{equation}
where $\xi=p(\hat\xi),\ \zeta=p(\hat\zeta)$. Let $\theta\in
\Gamma(A^*\otimes L)$ be a principal connection, and consider the
covariant derivative of $j\in\Gamma(\wedge^0 A^*\otimes(L^*\otimes\wh{L}))$.
\begin{proposition}\label{prop:dj}
Both $\d j$ and $\d^\theta j$ map to $0$ under $p$. Thus
\[ \d j \in \Gamma(\wedge^1
A^*\otimes L^*),\ \ \d^\theta j\in\Gamma(\wedge^1
A^*\otimes L^*)_{\on{hor}}.\] 
One has $\d^\theta j=\d j+\sig(\theta,\cdot)$. The differential of $\sig$ 
is related to the differential of $j$ by
\[ (\d\sig)(\xi_1,\xi_2)=\l d j,\ [\xi_1,\xi_2]_L\r\] 
for $\xi_1,\xi_2\in \Gamma(L)$. 
\end{proposition}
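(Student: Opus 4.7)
I would prove the four assertions in the order (a), (b), (c) by direct Cartan calculus on $A$, using the defining equation $\sigma(\xi,\zeta)=j([\xi,\zeta]_L)-[j(\xi),j(\zeta)]_{\wh L}$ and the compatibility (well-defined since the two possible lifts of $\eta$ to $\wh L$ differ by a central element) that for $\eta\in\Gamma(L)\subset \Gamma(A)$ the $A$-representation on $\wh L$ is the inner adjoint action $\nabla_\eta\hat\zeta=[j(\eta),\hat\zeta]_{\wh L}$.

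For (a), note that $p\circ j=\on{id}_L\in\Gamma(L^*\otimes L)$ is $A$-invariant (any derivation annihilates the identity endomorphism), so $\d\on{id}_L=0$ by Cartan's identity. Since $p$ is $A$-equivariant it commutes with $\d$ and with $\on{Hor}^\theta_*$, giving $p(\d j)=0$ and $p(\d^\theta j)=0$. For (b), the splitting $\xi=\theta(\xi)+\on{Hor}^\theta(\a(\xi))$ yields $(\on{Hor}^\theta_*\alpha)(\xi)=\alpha(\xi)-\alpha(\theta(\xi))$ on any $A$-form $\alpha$, so $\d^\theta j=\d j-(\d j)(\theta(\cdot))$. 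For $\eta,\zeta\in\Gamma(L)$, Cartan and the inner-action formula combine to give
\[
(\iota_\eta \d j)(\zeta)=(\L_\eta j)(\zeta)=[j(\eta),j(\zeta)]_{\wh L}-j([\eta,\zeta]_L)=-\sigma(\eta,\zeta),
\]
hence $(\d j)(\theta(\xi))=-\sigma(\theta(\xi),\cdot)$ and the formula of (b) follows.

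For (c), I view $\sigma$ as a $(\wedge^2 L^*)$-valued $0$-form on $A$; then $\iota_{\xi_3}\d\sigma=\L_{\xi_3}\sigma$ for $\xi_3\in\Gamma(A)$, and the derivation rule gives
\[
(\L_{\xi_3}\sigma)(\xi_1,\xi_2)=\a(\xi_3)\sigma(\xi_1,\xi_2)-\sigma([\xi_3,\xi_1]_A,\xi_2)-\sigma(\xi_1,[\xi_3,\xi_2]_A)
\]
for $\xi_1,\xi_2\in\Gamma(L)$. On the other hand, $\langle\d j,[\xi_1,\xi_2]_L\rangle(\xi_3)=(\L_{\xi_3}j)([\xi_1,\xi_2]_L)=\nabla_{\xi_3}j([\xi_1,\xi_2]_L)-j([\xi_3,[\xi_1,\xi_2]_L]_A)$; the plan is to substitute $j([\xi_1,\xi_2]_L)=[j\xi_1,j\xi_2]_{\wh L}+\sigma(\xi_1,\xi_2)$, expand $\nabla_{\xi_3}$ as a derivation of $[\cdot,\cdot]_{\wh L}$ (noting its action on the central factor $N\times\R\subset\wh L$ is multiplication by $\a(\xi_3)$), and use Jacobi $[\xi_3,[\xi_1,\xi_2]_L]_A=[[\xi_3,\xi_1]_A,\xi_2]_L+[\xi_1,[\xi_3,\xi_2]_A]_L$ to match the two expressions.

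The main obstacle is the bookkeeping in (c). The clean key is that by (a), each difference $\nabla_{\xi_3}(j\xi_i)-j([\xi_3,\xi_i]_A)=(\L_{\xi_3}j)(\xi_i)$ takes values in the central line bundle $N\times\R\subset\wh L$, so bracketing with any $j\xi_k$ kills it and $[\nabla_{\xi_3}(j\xi_i),j\xi_k]_{\wh L}=[j([\xi_3,\xi_i]_A),j\xi_k]_{\wh L}$. All ``inner'' $\wh L$-valued contributions thus cancel pairwise against those produced by $j[\xi_3,[\xi_1,\xi_2]_L]_A$, leaving only the central pieces $\a(\xi_3)\sigma(\xi_1,\xi_2)$ and $-\sigma([\xi_3,\xi_i]_A,\xi_j)$, which reassemble into $\L_{\xi_3}\sigma$.
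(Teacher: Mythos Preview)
Your proposal is correct and follows essentially the same route as the paper. Both proofs use $p\circ j=\on{id}_L$ for (a), the computation $(\iota_\eta\d j)(\zeta)=-\sigma(\eta,\zeta)$ for $\eta\in\Gamma(L)$ for (b), and for (c) the key observation that each $(\L_{\xi_3}j)(\xi_i)$ lands in the central line $N\times\R$, so its $\wh L$-brackets with $j\xi_k$ vanish and only the scalar pieces survive. The only cosmetic differences are that the paper verifies (b) by checking that $\d j+\sigma(\theta,\cdot)$ is horizontal and agrees with $\d j$ on horizontals, whereas you use the explicit projection formula $(\on{Hor}^\theta_*\alpha)(\xi)=\alpha(\xi)-\alpha(\theta(\xi))$; and for (c) the paper expands the $\L_{\xi_3}\sigma$ side directly while you expand both sides and match. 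You are also more explicit than the paper in flagging the compatibility $\nabla_\eta=[j(\eta),\cdot]_{\wh L}$ for $\eta\in\Gamma(L)$, which the paper uses without comment.
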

\begin{proof}
  The first claim follows since $p(j)=\on{id}_L\in L^*\otimes L$,
  hence $p(\d j)=\d p(j)=0$. To prove the formula for $\d^\theta j$ we
  compute, for $\xi\in \Gamma(L)$ and $\zeta\in \Gamma(A)$,
\[ \iota_\zeta \l \d j,\xi\r=\l \L_\zeta
j,\xi\r=\L_\zeta(j(\xi))-j(\L_\zeta\xi).\]
For $\zeta \in \Gamma(L)$ the right hand side is equal to $-\sig(\zeta,\xi)$, 
and we obtain $\iota_\zeta(\d j+\sig(\theta,\cdot))=0$. This shows that $\d
j+\sig(\theta,\cdot)$ is horizontal. On the other hand, it is obvious
that $\d j$ and $\d j+\sig(\theta,\cdot)$ agree on horizontal vectors.
Now let $\xi_1,\xi_2\in \Gamma(L)$ and $\zeta\in \Gamma(A)$. We
compute, using $(\L_\zeta j)(\xi_i)=(\iota_\zeta \d j)(\xi_i)\in
C^\infty(N)\subset \Gamma(\wh{L})$, 
\[
\begin{split}
\iota_\zeta (\d \sigma)(\xi_1,\xi_2)&=(\L_\zeta \sigma)(\xi_1, \xi_2)\\
& = \L_\zeta (\sigma(\xi_1,\xi_2)) - \sigma(\L_\zeta \xi_1, \xi_2) - \sigma(\xi_1, L_\zeta \xi_2) \\
& = \L_\zeta \big(j([\xi_1, \xi_2]_L) - [j(\xi_1), j(\xi_2)]_{\wh{L}}\big) 
-j([\L_\zeta \xi_1, \xi_2]_L) +[j(\L_\zeta \xi_1), j(\xi_2)]_{\wh{L}} \\&\ \ \ - 
j([\xi_1, \L_\zeta \xi_2]_L) + [j( \xi_1), j(\L_\zeta \xi_2)]_{\wh{L}} \\
& = (\L_\zeta j)([\xi_1, \xi_2]_L) - [(\L_\zeta j)(\xi_1), j(\xi_2)]_{\wh{L}} -
[j(\xi_1), (\L_\zeta j)(\xi_2)]_{\wh{L}} \\
& = (\L_\zeta j)([\xi_1, \xi_2]_L) = \iota_\zeta (\d j)([\xi_1, \xi_2]_L).
\end{split}
\]
Hence $(\d \sigma)(\xi_1,\xi_2)=(\d j)([\xi_1, \xi_2]_L)$. 
\end{proof}

\subsection{The form $\varpi$}
Let $F^{j(\theta)}\in \Gamma(\wedge^2 A^*\otimes \wh{L})$
be the curvature-like expression,
\[ F^{j(\theta)}=\d (j(\theta))-\hh [j(\theta),j(\theta)]_{\wh{L}}.\] 
%
Since $p(F^{j(\theta)})=F^\theta$ the difference
\[ \varpi:=F^{j(\theta)}-j(F^\theta)\] 
is scalar-valued, i.e. it is an element of $\Gamma(\wedge^2 A^*)$.
\begin{proposition}
The 2-form $\varpi\in\Gamma(\wedge^2 A^*)$ is given by the formula, 
\begin{equation}\label{eq:brylinski} 
\varpi=\l \d j, \theta \r + \hh \sig(\theta,\theta)= \l \d^\theta j,\theta\r -\hh
  \sig(\theta,\theta)
.\end{equation}
Its differential is basic, so that $\d\varpi=\a^*\eta$ for a 
closed 3-form $\eta\in
\Om^3(N)$. We have 
\begin{equation}\label{eq:eta}
\a^*\eta=-\l \d^\theta j,F^\theta\r.
\end{equation}
The contractions of $\varpi$ with $\xi\in\Gamma(L)$ are given by 
$\iota_\xi\varpi=-\l\d j,\xi\r$.  
\end{proposition}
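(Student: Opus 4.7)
The plan is to prove the four assertions in order, working directly from the definition $\varpi=F^{j(\theta)}-j(F^\theta)$ and applying the Cartan-style calculus on $\Gamma(\wedge A^*\otimes\V)$.

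For the two expressions \eqref{eq:brylinski}, I would expand $F^{j(\theta)}=\d(j(\theta))-\hh[j(\theta),j(\theta)]_{\wh L}$ using the Leibniz rule for the contraction pairing:
\[
\d(j(\theta)) = \l \d j,\theta\r + j(\d\theta),
\]
and the bracket formula \eqref{eq:lbracket}, which when applied to the 1-form $j(\theta)$ gives $\hh[j(\theta),j(\theta)]_{\wh L}=j(\hh[\theta,\theta]_A)-\hh\sig(\theta,\theta)$. Subtracting $j(F^\theta)=j(\d\theta-\hh[\theta,\theta]_A)$ gives the first formula. The second follows by substituting $\d j=\d^\theta j-\sig(\theta,\cdot)$ from Proposition \ref{prop:dj} and using $\sig(\theta,\theta)=\l\sig(\theta,\cdot),\theta\r$.

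For the contraction formula, I would start from $\varpi=\l\d j,\theta\r+\hh\sig(\theta,\theta)$ and evaluate on $\xi\in\Gamma(L)$. Since $\theta|_L=\on{id}_L$ (as $\theta$ is a splitting), $\iota_\xi\theta=\xi$. A short computation using $\iota_\xi\d j=\L_\xi j$ together with the identity $(\L_\xi j)(\eta)=-\sig(\xi,\eta)$ for $\eta\in\Gamma(L)$ (established inside the proof of Proposition \ref{prop:dj}) produces two terms involving $\sig$ that cancel against $\iota_\xi(\hh\sig(\theta,\theta))=\sig(\xi,\theta(\cdot))$, leaving $\iota_\xi\varpi=-\l\d j,\xi\r$.

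The main step, and the expected obstacle, is the basicness of $\d\varpi$ together with the formula \eqref{eq:eta}. Here I would apply $\d$ to $\varpi=F^{j(\theta)}-j(F^\theta)$ and use the Bianchi-type identity
\[
\d F^\alpha = -[F^\alpha,\alpha]_{\wh L}
\]
valid for any $\wh L$-valued 1-form $\alpha$ (a consequence of $\d^2=0$ and the graded Jacobi identity applied to $[[\alpha,\alpha],\alpha]$). Because $\varpi$ takes values in the central summand $N\times\R\subset\wh L$, one has $[F^{j(\theta)},j(\theta)]_{\wh L}=[j(F^\theta),j(\theta)]_{\wh L}$, and expanding the latter by \eqref{eq:lbracket} gives $j([F^\theta,\theta]_A)-\sig(F^\theta,\theta)$. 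The $j([F^\theta,\theta]_A)$ term cancels with $\d(j(F^\theta))=\l\d j,F^\theta\r+j(\d F^\theta)=\l\d j,F^\theta\r-j([F^\theta,\theta]_A)$, yielding
\[
\d\varpi = \sig(F^\theta,\theta)-\l\d j,F^\theta\r=-\l\d^\theta j,F^\theta\r.
\]
Since both $\d^\theta j$ and $F^\theta$ are horizontal (by Proposition \ref{prop:dj} and \eqref{eq:curv}), $\d\varpi$ is horizontal. Basicness is then automatic: $\L_\xi\d\varpi=\d\iota_\xi\d\varpi=0$ for $\xi\in\Gamma(L)$. Lemma \ref{lemma} (applied to the trivial representation) produces a unique $\eta\in\Om^3(N)$ with $\a^*\eta=\d\varpi$, and $\eta$ is closed by injectivity of $\a^*$. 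The bookkeeping of signs in the Bianchi computation is where I expect the main difficulty, since one must track both the grading sign for Lie-algebra-valued wedge products and the antisymmetry of $\sig$.
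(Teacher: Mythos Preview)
Your argument is correct. The derivation of \eqref{eq:brylinski} matches the paper's. For the contraction formula you start from the first expression $\varpi=\l\d j,\theta\r+\hh\sig(\theta,\theta)$ whereas the paper uses the second one $\varpi=\l\d^\theta j,\theta\r-\hh\sig(\theta,\theta)$; both routes are equally short.

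The genuine difference is in computing $\d\varpi$. The paper differentiates the explicit formula \eqref{eq:brylinski} directly, obtaining $\d\varpi=-\l\d j,\d\theta\r-\sig(\theta,\d\theta)+\hh(\d\sig)(\theta,\theta)$, and then invokes the last identity of Proposition~\ref{prop:dj}, namely $(\d\sig)(\xi_1,\xi_2)=\l\d j,[\xi_1,\xi_2]_L\r$, together with $\sig(\theta,[\theta,\theta])=0$ to reduce this to $-\l\d^\theta j,F^\theta\r$. You instead differentiate the defining expression $\varpi=F^{j(\theta)}-j(F^\theta)$, use the Bianchi identity $\d F^{j(\theta)}=-[F^{j(\theta)},j(\theta)]_{\wh L}$, and exploit the centrality of $\varpi$ to replace $F^{j(\theta)}$ by $j(F^\theta)$ inside the bracket. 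Your approach is a bit more structural and has the pleasant feature of not needing the $\d\sig$ identity from Proposition~\ref{prop:dj} at all; the paper's approach is a straight-line calculation once that identity is in hand. The sign bookkeeping you flag is indeed the only delicate point, and you have it right: with $F=\d\alpha-\hh[\alpha,\alpha]$ one gets $\d F=-[F,\alpha]$, and the graded antisymmetry of $\sig$ gives $\sig(F^\theta,\theta)=-\sig(\theta,F^\theta)$, which matches $\d^\theta j=\d j+\sig(\theta,\cdot)$.
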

\begin{proof}
We compute:
\[ \begin{split}
\varpi
&=\d (j(\theta))-j(\d\theta)-\hh\big([j(\theta),j(\theta)]_{\wh{L}}-j([\theta,\theta]_L)\big)\\
&=\l \d j,\theta\r+\hh \sig(\theta,\theta)\\
&=\l \d^\theta j,\theta\r -\hh
  \sig(\theta,\theta),\\
\d\varpi&=-\l \d j,\d\theta\r-\sig(\theta,\d\theta)+\hh (\d\sigma)(\theta,\theta)\\
&=-\l \d^\theta j,\d\theta\r+\hh
(\d\sigma)(\theta,\theta)\\
&=-\l \d^\theta j,F^\theta\r.
\end{split}\]
Here we have used $\sig(\theta,[\theta,\theta])=0$ and Proposition
\ref{prop:dj}. Since $\d \varpi \in \Gamma(\wedge^3 A^*)$ is horizontal,
by Lemma \ref{lemma} it is also basic. Hence, it is an image 
of a unique closed 3-form $\eta \in \Omega^3(N)$ under the map $\a^*$.

Finally, for the contractions of $\varpi$ with elements
$\xi\in \Gamma(L)$ we find,
\[ \iota_\xi\varpi=-\l \d^\theta j,\xi \r+\sig(\theta,\xi)=-\l \d j,\xi\r. \] 
\end{proof}
The next Proposition describes the dependence of $\eta$ on
the choice of splitting and connection. 
\begin{proposition}
  Let $j'=j+\beta$ be a new splitting, where $\beta\in \Gamma(L^*)$,
  and $\theta'=\theta+\lambda$ a new connection, with $\lambda\in
  \Gamma(A^*\otimes L)_{\on{hor}}$. Then $\eta'-\eta=\d\gamma$ where $\gamma\in\Om^2(N)$ is
  given by the following element of $\Gamma(\wedge^2
  A^*)_{\on{basic}}$, 
\[ \a^*\gamma=\l \d^\theta
j,\lambda\r+ \hh
\sig(\lambda,\lambda)-\l\beta,F^\theta+\d^\theta\lambda\r
+ \hh \beta([\lambda, \lambda]_L).\]
In particular, the cohomology class $[\eta]\in H^3(N,\R)$ is
independent of the choices of $j,\theta$. 
\end{proposition}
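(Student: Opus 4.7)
The plan is to compute $\varpi'-\varpi$ directly from the formula $\varpi = \l \d j,\theta\r + \hh \sig(\theta,\theta)$ of the preceding proposition, and to identify the result as $\a^*\gamma + \d\mu$ for some $\mu \in \Gamma(A^*)$. Since $\d^2 = 0$, this yields $\d(\varpi' - \varpi) = \d(\a^*\gamma)$, hence $\a^*(\eta' - \eta) = \a^*(\d\gamma)$, and the claim follows from the injectivity of $\a^*$ on $\Om^\bullet(N)$; the cohomology-class statement is then immediate.

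I would split the analysis into two stages, varying first the connection and then the splitting. Fixing $j$ and passing from $\theta$ to $\theta' = \theta + \lam$, bilinearity of $\sig$ combined with $\d^\theta j = \d j + \sig(\theta,\cdot)$ from Proposition \ref{prop:dj} yields a change of
\[
\l \d^\theta j,\lam\r + \hh \sig(\lam,\lam).
\]
Fixing this new $\theta'$ and then passing from $j$ to $j' = j + \beta$, the centrality of the image of $\beta$ in $\wh{L}$ makes the cross brackets $[j(\cdot),\beta(\cdot)]_{\wh{L}}$ vanish, so the defining equation for $\sig$ gives the transformation rule $\sig' = \sig + \beta([\cdot,\cdot]_L)$. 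Consequently this second step contributes an additional
\[
\l \d\beta,\theta'\r + \hh \beta([\theta',\theta']_L).
\]

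I would then rewrite this second contribution via the Leibniz rule $\d\l\beta,\theta'\r = \l\d\beta,\theta'\r + \l\beta,\d\theta'\r$ combined with the curvature identity $F^{\theta'} = \d\theta' - \hh[\theta',\theta']_L$, obtaining $\d\l\beta,\theta'\r - \l\beta,F^{\theta'}\r$. Expanding $F^{\theta'} = F^\theta + \d^\theta\lam - \hh [\lam,\lam]_L$ and summing the two stages, I expect to arrive at the clean identity
\[
\varpi' - \varpi \;=\; \a^*\gamma + \d\l\beta,\theta'\r,
\]
where $\a^*\gamma$ matches the stated formula. It remains to check that $\a^*\gamma$ is basic, so that $\gamma \in \Om^2(N)$ is well-defined; each summand involves only horizontal ingredients ($\d^\theta j$, $\lam$, $F^\theta$, $\d^\theta\lam$, $\sig$, and brackets paired against $\beta$), so is horizontal, and horizontal coincides with basic in the trivial representation by Lemma \ref{lemma}. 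The only real work is careful bookkeeping of signs and brackets when expanding $\sig'$, $F^{\theta'}$, and $\d\langle\beta,\theta'\rangle$; there are no conceptual obstacles.
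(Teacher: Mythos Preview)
Your proposal is correct and follows essentially the same route as the paper: the paper also computes $\varpi'-\varpi$ directly, separating the ``non-$\beta$'' terms (your Stage~1) from the ``$\beta$'' terms (your Stage~2), rewrites the latter via the Leibniz rule and the curvature identity to extract the exact piece $\d\l\beta,\theta'\r$, and then invokes Lemma~\ref{lemma} to identify the remaining horizontal expression as $\a^*\gamma$. The only cosmetic difference is that you phrase the decomposition as two successive variations while the paper groups terms by whether or not they contain $\beta$; the computations are identical.
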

\begin{proof}
  From its defining formula, we see that the cocycle $\sigma$ changes
  by a coboundary:
$\sig'(\xi_1,\xi_2)=\sig(\xi_1,\xi_2)+\beta([\xi_1,\xi_2]_L)$. 
Hence, 
\[ \varpi'= \l \d j', \theta' \r + \hh \sigma(\theta', \theta') +
\hh \beta([\theta', \theta']_L) . \]
First, consider terms which do not involve $\beta$:
\[ \l \d j, \theta+\lambda \r + \hh \sigma(\theta+\lambda,\theta+\lambda) =
\varpi + \l \d^\theta j, \lambda \r + \hh \sigma(\lambda, \lambda) . \]
The remaining terms may be written as 
\[ \begin{split}
  \l \d \beta, \theta+\lambda \r + 
  \hh \beta([\theta+\lambda, \theta+\lambda]_L)
  &= \d \l \beta, \theta+\lambda \r - \l \beta, \d \theta + \d \lambda \r
  + \hh \beta([\theta+\lambda, \theta+\lambda]_L) 
  \\&=
  \d \l\beta,\theta+\lambda\r-\l\beta,F^\theta+\d^\theta\lambda\r
  + \hh \beta([\lambda, \lambda]_L) . \\
\end{split}\]
Hence, $\d(\varpi'-\varpi)=\d \sigma$, where
\[ \sigma=\l \d^\theta j,\lambda\r+ \hh
\sig(\lambda,\lambda)-\l\beta,F^\theta+\d^\theta\lambda\r
+ \hh \beta([\lambda, \lambda]_L).\]
Since $\sigma \in \Gamma(\wedge^2 A^*)_{\on{hor}}$, by Lemma \ref{lemma}
it is basic, and we conclude that $\gamma \in \Omega^2(N)$ defined
by equality $\a^*\gamma =\sigma$ satisfies $\eta'-\eta=\d \gamma$.
\end{proof}
\subsection{The cohomology class $[\eta]$ as an obstruction class}
We will now show that the cohomology class of $\eta$ is precisely the
obstruction class for our lifting problem.
\begin{theorem}\label{th:liftingproblem}
    Suppose that $\theta$ is a connection on $A$ and 
  $j\colon L\to \wh{L}$ is a splitting. Then there is a 1-1-
  correspondence between:
\begin{enumerate}
\item isomorphism classes of lifts $(\wh{A},\wh{\theta})$ of the data $(A,\theta)$, and 
\item 2-forms $\om\in\Om^2(N)$ such that $\d\om=-\eta$.
\end{enumerate}
It follows that $[\eta]=0$ precisely if the lifting problem
\eqref{eq:lieseq}, \eqref{eq:lift} admits a solution.
\end{theorem}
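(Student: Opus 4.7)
The plan is to establish the bijection by explicit construction in both directions. Given a lift $(\wh A,\wh\theta)$, the given splitting $j$ of $\wh L\to L$ determines a unique vector bundle splitting $s\colon A\to\wh A$ extending $j$ and satisfying $\wh\theta\circ s=j\circ\theta$, because for each $\xi\in A$ the fiberwise equation $\wh\theta(\hat\xi)=j(\theta(\xi))$ has a unique solution in the $\R$-torsor $p^{-1}(\xi)$ (using that $\wh\theta$ restricts to the identity on $N\times\R\subset\wh L$). Via $s$ I identify $\wh A\cong A\oplus(N\times\R)$ and encode the bracket as a cocycle $\tau\in\Gamma(\wedge^2 A^*)$ in the standard form
\[ [(\xi_1,f_1),(\xi_2,f_2)]_{\wh A}=\bigl([\xi_1,\xi_2]_A,\ \a(\xi_1)f_2-\a(\xi_2)f_1+\tau(\xi_1,\xi_2)\bigr). \]
The Jacobi identity for $\wh A$ is precisely $\d\tau=0$, while $s|_L=j$ forces $\tau|_L$ to be the cocycle $\sigma$ associated with the central extension $\wh L$ (up to the sign implicit in this convention).

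The key claim is that $\tau-\varpi\in\Gamma(\wedge^2 A^*)$ is horizontal. For $\xi\in\Gamma(L)$ one has $\iota_\xi\varpi=-\langle\d j,\xi\rangle$ by Proposition \ref{prop:dj}. I plan to compute $\iota_\xi\tau$ directly from the bracket formula, using that on $\Gamma(L)$ the prescribed $A$-representation on $\wh L$ is forced by centrality to agree with the adjoint action through any lift to $\wh L$, and check that the same expression $-\langle\d j,\xi\rangle$ emerges. Granting horizontality, Lemma \ref{lemma} applied to the trivial representation produces a unique $\om\in\Om^2(N)$ with $\tau-\varpi=\a^*\om$; since $\d\tau=0$ and $\d\varpi=\a^*\eta$, this yields $\d\om=-\eta$. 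Independence of the choice of representative of the isomorphism class of $(\wh A,\wh\theta)$ follows from the uniqueness characterization of $s$: any isomorphism $\Psi\colon(\wh A,\wh\theta)\to(\wh A',\wh\theta')$ of lifts must carry $s$ to $s'$, hence $\tau$ to $\tau'$, giving $\om=\om'$.

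Conversely, given $\om$ with $\d\om=-\eta$, I set $\tau:=\varpi+\a^*\om$, which is closed with the correct restriction to $L$. Use $\tau$ to define the bracket on $\wh A:=A\oplus(N\times\R)$ via the formula above (Jacobi from $\d\tau=0$), with anchor $(\xi,f)\mapsto\a(\xi)$, with kernel identified with $\wh L$ via $j$, and with connection $\wh\theta(\xi,f):=j(\theta(\xi))+(0,f)$ satisfying $\wh\theta|_{\wh L}=\on{id}$ and $p\circ\wh\theta=\theta\circ p$. The induced $A$-action on $\wh L$ coming from the bracket of $\wh A$ agrees with the prescribed one, again by centrality-forced uniqueness. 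The two procedures are manifestly mutually inverse, and the concluding statement is then immediate: a 2-form $\om$ with $\d\om=-\eta$ exists iff $[\eta]=0$ in $H^3(N,\R)$, so the lifting problem admits a solution iff $[\eta]=0$. The main obstacle will be the horizontality computation for $\tau-\varpi$ on $\Gamma(L)$, which requires carefully matching the cocycle describing the extension $\wh A$ against the explicit definition $\varpi=F^{j(\theta)}-j(F^\theta)$, and using that the $L$-action on $\wh L$ is unambiguously the adjoint action through any lift.
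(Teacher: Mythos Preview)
Your argument is correct, but it proceeds differently from the paper. The paper works with the decomposition $\wh{A}=\wh{L}\oplus TN$ furnished by the connection: it defines the bracket on $\Gamma(\wh{A})$ piecewise (on $\wh{L}$, between $\wh{L}$ and horizontal lifts via the given $A$-representation, and on horizontal lifts via an unknown $\omega\in\Om^2(N)$), and then checks the Jacobi identity for three horizontal entries directly, which yields the equation $\d\omega=-\eta$ through the explicit formula $\a^*\eta=-\langle\d^\theta j,F^\theta\rangle$. Your route instead takes the full splitting $s\colon A\to\wh{A}$ and packages the entire bracket into a closed Lie algebroid $2$-cocycle $\tau\in\Gamma(\wedge^2 A^*)$; you then recover $\omega$ from the identity $\tau-\varpi=\a^*\omega$, using the contraction formula $\iota_\xi\varpi=-\langle\d j,\xi\rangle$ for $\xi\in\Gamma(L)$ (that formula is the last assertion of the proposition describing $\varpi$, not Proposition~\ref{prop:dj}). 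This is arguably cleaner: once you know $\iota_\xi\tau=\iota_\xi\varpi$ on $\Gamma(L)$, horizontality plus Lemma~\ref{lemma} and $\d\tau=0$, $\d\varpi=\a^*\eta$ give everything at once, and the converse direction becomes a one-line definition. The paper's approach, by contrast, avoids introducing $\tau$ and never invokes the contraction identity for $\varpi$ in the proof; it is more hands-on but also more transparent about where the obstruction equation comes from. Both approaches implicitly use that the restriction of the prescribed $A$-representation on $\wh{L}$ to $L$ coincides with the adjoint action through any lift; in your converse direction this is what ensures the bracket you build on $\wh{A}=A\oplus(N\times\R)$ really has the correct $\wh{L}$ and $A$-action, and your verification via $\iota_\xi\langle\d j,\zeta\rangle=\hat\nabla_\xi(j(\zeta))-j(\nabla_\xi\zeta)$ makes this go through.
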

\begin{proof}
We first show how to construct a solution of the lifting problem, provided
$\eta$ is exact. Let $A=L\oplus TN$ be the decomposition defined by the connection $\theta$,
and put
\[ \wh{A}:=\wh{L}\oplus TN\] 
with the obvious projection $p\colon \wh{A}\to A$, with the connection
$\hat\theta$ the projection to the first summand, and with anchor map
$\wh{a}$ the projection to the second summand. Let $j_A=j\oplus
\on{id}_{TN}\colon A\to \wh{A}$.  We want to consider Lie brackets
$[\cdot,\cdot]_{\hat{A}}$ on $\Gamma(\wh{A})$, extending the bracket
on $\Gamma(\wh{L})$, and such that $p$ induces a Lie algebra
homomorphism $\Gamma(\wh{A})\to \Gamma(A)$.  If
$\hat\zeta\in\Gamma(\hat L )$, we have
\[ -[\hat\zeta,\hat\xi]_{\hat A}=[\hat\xi,\hat\zeta]_{\hat A}=\L_\xi
\hat\zeta,\ \ \xi=p(\hat\xi)\] 
using the $A$-representation on $\hat L$.  
Hence, the bracket is determined if one of the entries is a section of
$\wh{L}$. Consequently we only need to specify the bracket on horizontal
sections. For $X,Y\in\mf{X}(N)$ these brackets will have the form
\[\begin{split}
 [\on{Hor}^{\hat\theta}(X),\on{Hor}^{\hat\theta}(Y)]_{\hat A}
&=j_A([\on{Hor}^{\theta}(X),\on{Hor}^{\theta}(Y)]_{A})-\om(X,Y)\\
&=\on{Hor}^{\hat\theta}([X,Y])+j(F^\theta(X,Y))-\om(X,Y)\end{split}
\]
for some 2-form $\om\in\Om^2(N)$. Having defined the bracket in this
way, consider the Jacobi identity
\[ [\hat\xi_1,[\hat\xi_2,\hat\xi_3]_{\hat A}]_{\hat A}+\on{cycl.}=0.\]
If $\hat\xi_3$ lies in $\Gamma(\wh{L})$, this identity is equivalent to 
the representation property, 
$\L_{\xi_1}\L_{\xi_2}\hat\xi_3-\L_{\xi_2}\L_{\xi_1}\hat\xi_3-\L_{[\xi_1,\xi_2]}\hat\xi_3=0$, 
where $\xi_i=p(\hat\xi_i)$. Hence, the Jacobi identity is automatic if
one of the entries lies in $\wh{L}$. It remains to consider the case
that $\xi_i=\on{Hor}^{\hat\theta}(X_i)$ for $i=1,2,3$. Separating
terms according to the decomposition $\wh{A}=(N\times\R)\oplus A$, we
have,
\[
\begin{split}
\lefteqn{ [\on{Hor}^{\hat\theta}(X_1),[\on{Hor}^{\hat\theta}(X_2),\on{Hor}^{\hat\theta}(X_3)]_{\hat
  A}]_{\hat A}}\\
&=-\L_{X_1}\om(X_2,X_3)+\om(X_1,[X_2,X_3])
+\l d^\theta j(X_1),\ F^\theta(X_2,X_3)\r+\cdots
\end{split}\]
where $\cdots$ indicates sections of $j_A(A)\cong 0\oplus A$. 
So the scalar part of the Jacobi identity reads, 
\[ -\L_{X_1}\om(X_2,X_3)+\om(X_1,[X_2,X_3])+\l d^\theta j(X_1),\ F^\theta(X_2,X_3)\r+\on{cycl.}=0\]
Equivalently, $\d\om-\l\d^\theta j,F^\theta\r=0$. By Equation
\eqref{eq:eta}, we have $\a^*\eta=-\l\d^\theta j,F^\theta\r$. We hence
conclude that the bracket $[\cdot,\cdot]_{\hat A}$ defined by $\om$ is
a Lie bracket if and only if $\d\om=-\eta$.

Conversely, if $p\colon \wh{A}\to A$ is a solution of the lifting
problem, choose a connection $\wh{\theta}$ lifting $\theta$. This
gives a splitting $\wh{A}=\wh{L}\oplus TN$ lifting the splitting of
$A$. Define $\om$ as the scalar component of the bracket on
$\Gamma(\hat A)$, restricted to horizontal sections. The
calculation above shows that $\d\om=-\eta$. 
\end{proof}

\begin{example}[Principal bundles II]
  This is a continuation of Example \ref{ex:princ}, where we
  considered the lifting problem for a principal $H$-bundle $P\to N$.
  It was shown by Brylinski that $[\eta]\in H^3(N,\R)$ is the image of
  the obstruction class under the coefficient homomorphism
  $H^3(N,\Z)\to H^3(N,\R)$.  Given a solution $\wh{P}\to N$ of the
  lifting problem, the connection $\theta\in \Gamma(A^*\otimes L)\cong
  \Om^1(P,\h)^H$ is an ordinary principal connection on $P$, and
  $\wh{\theta}$ is a lift of $\theta$ to $\wh{P}$.
\end{example}

\subsection{The equivariant lifting problem}
\label{subsec:equivariant}
Suppose now that the sequence \eqref{eq:lieseq} is $G$-equivariant,
with the trivial action on the bundle $N\times \R$, and that the
$A$-representation on $\Gamma(\wh{L})$ is $G$-equivariant.  
We may then consider the $G$-equivariant version of the lifting problem: Thus, we
are looking for a $G$-equivariant lift $\wh{A}\to A$, such that the
action on $\wh{A}$ has infinitesimal generators $x_{\wh{A}}$
satisfying $p(x_{\wh{A}})=x_A$.

Suppose there is a $G$-equivariant
splitting $j$ of the sequence \eqref{eq:lieseq}, and a $G$-invariant
connection $\theta$ on $A$.  Replacing $\d$ with the equivariant
differential in the discussion above, we find that the 2-form $\varpi$
coincides with its equivariant extension. Its equivariant differential is
$\d_G\varpi=\a^*\eta_G$ where $\a^*\eta_G=-\l\d^\theta j,F^\theta_G\r$. Thus
\[ \a^*(\eta_G-\eta)=\l\d^\theta j,\Psi\r\]
where $\Psi\in\Om^0(N,L)$ was defined in \ref{subsec:equiv}.  To
address the equivariant lifting problem, we use the notation from the
proof of Theorem \ref{th:liftingproblem}.  Let $\wh{A}=\wh{L}\oplus
TN$ carry the diagonal $G$-action, the map $p\colon \wh{A}\to A$ is
$G$-equivariant. The bracket $[\cdot,\cdot]_{\hat A}$ defined by $\om$
with $\d\om=-\eta$, is $G$-invariant provided that $\om$ is
$G$-invariant.

\begin{theorem}
  Let $(A,\theta,x_A)$ be a $G$-equivariant transitive Lie algebroid,
  with invariant connection $\theta$ and with equivariant generators
  $x_A$. Let $\wh{L}\to L$ be a $G$-equivariant central extension,
  together with a $G$-equivariant splitting $j$, defining
  $\eta_G\in\Om^3_G(N)$ as above. Then there is a 1-1 correspondence
  between
\begin{enumerate}
\item isomorphism classes of equivariant lifts
      $(\hat{A},\hat{\theta},x_{\hat{A}})$ of the data
      $(A,\theta,x_A)$.
\item equivariant 2-forms $\om_G\in\Om^2_G(N)$ such that
      $\d_G\om_G=-\eta_G$.
\end{enumerate}
\end{theorem}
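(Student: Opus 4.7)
The strategy is to extend the proof of Theorem~\ref{th:liftingproblem} to the equivariant Cartan model, with the extra data of the equivariant generators $x_{\wh A}$ (versus $x_A$) corresponding to the moment part of $\om_G$. Write $\om_G = \om + \mu$ and $\eta_G = \eta + \eta_{(1)}$ in their Cartan-model decompositions, where $\mu \in (\g^* \otimes C^\infty(N))^G$ and $\eta_{(1)}$ is characterized by $\a^*\eta_{(1)}(x) = \l \d^\theta j, \Psi(x)\r$, as in Subsection~\ref{subsec:equivariant}. Using $\d_G = \d - \sum_j e^j \iota_{e_j}$, the equation $\d_G\om_G = -\eta_G$ decomposes into the non-equivariant condition $\d\om=-\eta$ together with the moment identity $\d\mu(x) - \iota_{x_N}\om = -\eta_{(1)}(x)$ for all $x \in \g$, where $x_N = \a(x_A) \in \mf{X}(N)$.

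\textbf{From (a) to (b).} Given an equivariant lift $(\wh A,\wh\theta,x_{\wh A})$, Theorem~\ref{th:liftingproblem} produces $\om \in \Om^2(N)$ with $\d\om=-\eta$; the 2-form $\om$ is automatically $G$-invariant because $\wh\theta$, $\theta$, and $j$ are. To extract the moment, observe that since $p(x_{\wh A}) = x_A$, the section $\wh\theta(x_{\wh A}) - j(\theta(x_A)) \in \Gamma(\wh L)$ lies in the kernel of $p$, hence in $N\times\R$, and thus defines a function $\mu(x) \in C^\infty(N)$; equivariance of $x \mapsto \mu(x)$ is immediate from the equivariance of $\wh\theta$ and $j$. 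The moment identity is then obtained by extracting the scalar component of the generator-compatibility relation $[x_{\wh A},\on{Hor}^{\wh\theta}(Y)]_{\wh A} = \L_x \on{Hor}^{\wh\theta}(Y) = \on{Hor}^{\wh\theta}([x_N,Y])$, using the explicit expression for the bracket on $\wh A$ from the proof of Theorem~\ref{th:liftingproblem} and the identification of $\eta_{(1)}$ recalled above.

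\textbf{From (b) to (a).} Given $\om_G = \om + \mu$ with $\d_G \om_G = -\eta_G$, Theorem~\ref{th:liftingproblem} applied to $\om$ produces a lift $(\wh A,\wh\theta) = (\wh L \oplus TN, \on{pr}_{\wh L})$ whose bracket is built from $\om$; since $\om$, $\theta$, and $j$ are all $G$-invariant, the resulting Lie algebroid and connection carry natural $G$-actions. Define equivariant generators by
\[
x_{\wh A} := \on{Hor}^{\wh\theta}(x_N) + j(\theta(x_A)) + \mu(x) \in \Gamma(\wh A),
\]
which manifestly satisfies $p(x_{\wh A}) = x_A$ and is $G$-equivariant in $x$. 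It remains to verify the bracket relation $[x_{\wh A},\wh\xi]_{\wh A} = \L_x \wh\xi$. For $\wh\xi \in \Gamma(\wh L)$ this holds by the lifted $A$-representation on $\wh L$; the only nontrivial case is $\wh\xi = \on{Hor}^{\wh\theta}(Y)$, where the horizontal components match automatically and the scalar component boils down precisely to the moment identity $\d\mu(x) - \iota_{x_N}\om = -\eta_{(1)}(x)$, which is the degree-$(0,1)$ piece of the assumed equation $\d_G \om_G = -\eta_G$.

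\textbf{Main obstacle.} The essential computation is the scalar-component calculation of $[x_{\wh A}, \on{Hor}^{\wh\theta}(Y)]_{\wh A}$ in both directions: in one direction it produces $\mu$ and shows that the moment equation holds; in the other it shows that the moment equation suffices to make $x_{\wh A}$ a valid generator. This is a direct but sign-sensitive manipulation, paralleling the scalar Jacobi computation in Theorem~\ref{th:liftingproblem} but with one of the three entries replaced by the equivariant generator $x_{\wh A}$; the key input that makes the computation close is the explicit formula $\a^*\eta_{(1)}(x) = \l \d^\theta j,\theta(x_A)\r$ from Subsection~\ref{subsec:equivariant}.
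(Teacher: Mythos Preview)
Your proposal is correct and follows essentially the same argument as the paper: decompose $\om_G$ into its form-degree components, invoke the non-equivariant Theorem~\ref{th:liftingproblem} for the top part, and identify the remaining moment condition with the scalar component of the generator-compatibility relation $[x_{\wh A},\on{Hor}^{\wh\theta}(Y)]_{\wh A}=\on{Hor}^{\wh\theta}([x_N,Y])$. The only cosmetic difference is that the paper writes the generators as $x_{\wh A}=j_A(x_A)+\Phi(x)$ and sets $\om_G=\om-\Phi$, so your $\mu$ is their $-\Phi$; this accounts for the sign you flag as delicate.
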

\begin{proof}
To describe generators for the action, it suffices to describe their
scalar part. Thus write
\[ x_{\wh{A}}=j(x_A)+\Phi(x)\]
for some $G$-equivariant map $\Phi\colon N\to \g^*$. Then $x_{\wh{A}}$ are generators for the
$\g$-action if and only if the map $x\mapsto x_{\hat A}$ 
defines the $\g$-representation on $\hat{A}$, i.e. 
\[ [x_{\hat A},\hat\xi]_{\hat A}=\L_{x_A}\hat\xi\]
for $\hat\xi\in\Gamma(\hat A)$. For $\hat\xi\in\Gamma(\wh{L})$, this
property is automatic. It is hence
enough to consider the condition 
\[ [x_{\hat A},\on{Hor}^{\hat \theta}(X)]_{\hat A}=\L_{x_A}\on{Hor}^{\hat\theta}(X)
=\on{Hor}^{\hat \theta}([x_N,X]).\]
Writing 
$ x_{\hat A}=j_A(x_A)+\Phi(x)=\on{Hor}^{\hat
  \theta}(x_N)+j(\Psi(x))+\Phi(x)$, 
we find, 
\[ \begin{split}
\lefteqn{[x_{\hat A},\on{Hor}^{\hat \theta}(X)]_{\hat A}-
\on{Hor}^{\hat \theta}([x_N,X])}\\
&=j(F^\theta(x_N,X))-\om(x_N,X)
-\L_X \Phi(x)+\L_{\on{Hor}^{\theta}(X)}j(\Psi(x))\\
&=-\om(x_N,X)-\iota_X \d\Phi(x)+\l\d^\theta j(X),\Psi(x)\r+\ldots
\end{split}
\]
where $\ldots$ indicates sections of $j_A(A)=0\oplus A$. The $\ldots$
terms have to cancel (by considering their image under $p$), hence we 
obtain the condition 
\[ \om(x_N,\cdot)+\d\Phi(x)=\l\d^\theta j(X),\Psi(x)\r.\] 
Since $\a^*\eta_G=\a^*\eta+\l\d^\theta j(X),\Psi\r$, this is the component of
form degree 1 of the equation $\d_G\om_G=-\eta_G$, where
$\om_G=\om-\Phi$ is an equivariant extension of $\om$.
\end{proof}

If $G$ is compact (so that invariant connections, splittings etc. can
be obtained by averaging), it follows that $[\eta_G]\in H^3_G(N,\R)$
is precisely the obstruction for the equivariant lifting problem.

Below, we will encounter situations where $\Phi=0$, so that $\om$
coincides with its equivariant extension. Equivalently, $j(x_A)$ 
are generators for the action. Here is a first example. 

\begin{example}
  Suppose $N=G/K$ for a compact subgroup $K$. Since
  $H_G(N,\R)=H_{K}(\pt,\R)$ vanishes in odd degrees, the class
  $[\eta_G]$ is necessarily trivial. Moreover, it is easy to see that
  $\eta_G=-\d_G \om$ for a unique invariant 2-form $\om$. Hence, one
  obtains a solution of the lifting problem with $x_{\hat A}=j(x_A)$.
\end{example}

\subsection{Relation with Courant algebroids}
In the previous sections, we explained how the lifting problem for a transitive Lie algebroid defines an obstruction class in $H^3(N,\R)$. By a well-known result of \v{S}evera, the group $H^3(N,\R)$ classifies exact Courant algebroids over $N$. We will explain now how to give a direct description of this Courant algebroid. 
As before, we start out by choosing a connection $\theta$ on $A$, as well as a splitting $j$. These data define a 2-form $\varpi\in \Gamma(\wedge^2 A^*)$, such that  $\d\varpi=\a^*\eta$ is basic. 

Let $A\oplus A^*$ carry the symmetric bilinear form extending the pairing between $A$ and $A^*$, and the standard Courant bracket, 
\[ \Cour{(v_1,\alpha_1),\ (v_2,\alpha_2)}=([v_1,v_2]_A,\ \L_{v_1}\alpha_2-\iota_{v_2}\d\alpha_1).\]
\begin{proposition}
  The map $f\colon L\to A\oplus A^*,\ \xi\mapsto
  (\xi,\iota_\xi\varpi)$ defines an isotropic $L$-action on $A\oplus
  A^*$. That is, its image is isotropic, and the induced map on
  sections preserves brackets.
\end{proposition}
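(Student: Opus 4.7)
The proof splits into two independent checks: isotropy of the image and preservation of brackets. Both reduce to standard manipulations with Cartan's identities, together with the crucial fact from the previous results that $\d\varpi=\a^*\eta$ is horizontal and that $\iota_\xi\a^*\eta=0$ for $\xi\in\Gamma(L)$.

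For isotropy, I would compute the pairing
\[ \l f(\xi),f(\zeta)\r=\iota_\xi(\iota_\zeta\varpi)+\iota_\zeta(\iota_\xi\varpi)=0,\]
which vanishes by antisymmetry of $\varpi\in\Gamma(\wedge^2 A^*)$. In particular $\l f(\xi),f(\xi)\r=0$, so the image is isotropic.

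For bracket preservation, I would unfold the Courant bracket: for $\xi,\zeta\in\Gamma(L)\subset \Gamma(A)$,
\[ \Cour{f(\xi),f(\zeta)}=\big([\xi,\zeta]_A,\ \L_\xi(\iota_\zeta\varpi)-\iota_\zeta\d(\iota_\xi\varpi)\big).\]
The first slot is $[\xi,\zeta]_L$ by definition of the Lie algebroid bracket on $L\subset A$. For the second slot, I would use Cartan's identities $\L_\xi=\d\iota_\xi+\iota_\xi\d$ and $[\L_\xi,\iota_\zeta]=\iota_{[\xi,\zeta]_A}$ to rewrite
\[ \L_\xi(\iota_\zeta\varpi)-\iota_\zeta\d(\iota_\xi\varpi)=\iota_\zeta\iota_\xi\d\varpi+\iota_{[\xi,\zeta]_A}\varpi.\]

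The key step is then to observe that the first term vanishes: since $\d\varpi=\a^*\eta$ and $\a(\xi)=0$ for $\xi\in\Gamma(L)$, we have $\iota_\xi\d\varpi=\iota_\xi\a^*\eta=\a^*(\iota_{\a(\xi)}\eta)=0$. Hence the second slot reduces to $\iota_{[\xi,\zeta]_L}\varpi$, and the whole expression equals $f([\xi,\zeta]_L)$, as required. The only point requiring any care is the bookkeeping of Cartan identities; the geometric content is entirely carried by the horizontality of $\d\varpi$, which was established in the Proposition preceding the statement.
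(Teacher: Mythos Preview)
Your proof is correct and follows essentially the same approach as the paper. The paper first notes that $\d\varpi$ basic implies $\d\iota_\xi\varpi=\L_\xi\varpi$ and then applies the commutator $[\L_\xi,\iota_\zeta]=\iota_{[\xi,\zeta]_A}$, whereas you perform the Cartan manipulation first and kill the $\iota_\zeta\iota_\xi\d\varpi$ term afterwards; this is purely a reordering of the same two ingredients.
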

\begin{proof}
Since $\d\varpi$ is basic, we have $\d\iota_\xi\varpi=L_\xi\varpi$. Hence
\[ \begin{split}
\Cour{f(\xi_1),f(\xi_2)}&=\Cour{(\xi_1,\iota_{\xi_1}\varpi),(\xi_2,\iota_{\xi_2}\varpi)}\\
&=([\xi_1,\xi_2]_A,\ L_{\xi_1}\iota_{\xi_2}\varpi-\iota_{\xi_2}\d\iota_{\xi_1}\varpi) \\
&=([\xi_1,\xi_2]_A,\ \iota_{[\xi_1,\xi_2]_A}\varpi)=f([\xi_1,\xi_2]_A).
\end{split}\]
The property $\l f(\xi),f(\xi)\r=0$ is straightforward.
\end{proof}

As in Bursztyn-Cavalcanti-Gualtieri \cite{bur:red} we may consider the
reduction of $A$ by the isotropic $L$-action. 
\begin{proposition}
The reduced Courant algebroid $f(L)^\perp/f(L)$ is canonically
isomorphic to $TN\oplus T^*N$ with the $\eta$-twisted Courant
bracket. 
\end{proposition}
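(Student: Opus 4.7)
The plan is to exhibit an explicit vector-bundle isomorphism $\Theta\colon f(L)^\perp/f(L)\to TN\oplus T^*N$, then verify that it intertwines both the symmetric bilinear forms and the brackets, with the $\eta$-twist on the right-hand side arising precisely from $\d\varpi=\a^*\eta$.

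First I compute $f(L)^\perp$. With the natural pairing $\langle(v_1,\alpha_1),(v_2,\alpha_2)\rangle=\alpha_1(v_2)+\alpha_2(v_1)$, the condition $(v,\alpha)\in f(L)^\perp$ reads $\alpha(\xi)+\varpi(v,\xi)=0$ for every $\xi\in\Gamma(L)$; equivalently $\alpha-\iota_v\varpi$ is horizontal, and thus equals $\a^*\beta$ for a unique $\beta\in\Om^1(N)$. Set $\Theta([(v,\alpha)])=(\a(v),\beta)$. Replacing $(v,\alpha)$ by $(v+\xi,\alpha+\iota_\xi\varpi)$ with $\xi\in\Gamma(L)$ changes neither $\a(v)$ nor $\alpha-\iota_v\varpi$, so $\Theta$ descends to the quotient. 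A horizontal lift $v=\on{Hor}^\theta(X)$ followed by $\alpha=\a^*\beta+\iota_v\varpi$ produces a preimage of an arbitrary $(X,\beta)$, giving surjectivity; injectivity is immediate, since $\Theta([(v,\alpha)])=0$ forces $v\in\Gamma(L)$ and $\alpha=\iota_v\varpi$, hence $(v,\alpha)\in f(L)$. Compatibility of the pairings is a one-line check: substituting $\alpha_i=\iota_{v_i}\varpi+\a^*\beta_i$ into $\alpha_1(v_2)+\alpha_2(v_1)$, the two cross terms $\varpi(v_1,v_2)$ and $\varpi(v_2,v_1)$ cancel, leaving $\beta_1(X_2)+\beta_2(X_1)$.

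The main step is the bracket calculation. I expand the Courant bracket of two elements of $f(L)^\perp$ and substitute $\alpha_i=\iota_{v_i}\varpi+\a^*\beta_i$. Using $[\L_{v_1},\iota_{v_2}]=\iota_{[v_1,v_2]_A}$ together with $\d\alpha_1=\L_{v_1}\varpi-\iota_{v_1}\d\varpi+\a^*\d\beta_1$ and the identities $\iota_{v}\circ\a^*=\a^*\circ\iota_{\a(v)}$, $\L_{v}\circ\a^*=\a^*\circ\L_{\a(v)}$, the terms involving $\L_{v_1}\varpi$ cancel, leaving
\[
\Cour{(v_1,\alpha_1),(v_2,\alpha_2)}
=\bigl([v_1,v_2]_A,\ \iota_{[v_1,v_2]_A}\varpi
+\a^*\bigl(\L_{X_1}\beta_2-\iota_{X_2}\d\beta_1+\iota_{X_2}\iota_{X_1}\eta\bigr)\bigr),
\]
where the last summand arises exclusively from $-\iota_{v_2}\iota_{v_1}\d\varpi=-\iota_{v_2}\iota_{v_1}\a^*\eta=\a^*\iota_{X_2}\iota_{X_1}\eta$. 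In particular the bracket is again in $f(L)^\perp$, and its image under $\Theta$ is $\bigl([X_1,X_2],\,\L_{X_1}\beta_2-\iota_{X_2}\d\beta_1+\iota_{X_2}\iota_{X_1}\eta\bigr)$, which is precisely the $\eta$-twisted Courant bracket on $TN\oplus T^*N$.

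The only real obstacle is bookkeeping in the bracket computation: the crucial cancellation of the $\L_{v_1}\varpi$-terms is what both keeps the output in $f(L)^\perp$ and isolates the single remaining $\d\varpi$-contribution, which then becomes the $\eta$-twist. Everything else — well-definedness, invertibility and compatibility with the pairings — is essentially a matter of unwinding the definitions.
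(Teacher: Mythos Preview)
Your proof is correct and follows essentially the same route as the paper: both parametrize $f(L)^\perp$ by elements of the form $(v,\iota_v\varpi+\a^*\beta)$ (the paper writes this as $f(v)+\alpha$ with $\alpha\in\Gamma(T^*N)$) and then compute the Courant bracket to isolate the single $\d\varpi=\a^*\eta$ contribution as the twist. You are more explicit than the paper about well-definedness, bijectivity, and compatibility with the pairing; note two harmless sign slips in your intermediate steps (the orthogonality condition should read $\alpha(\xi)-\varpi(v,\xi)=0$, and the $\eta$-term arises from $+\iota_{v_2}\iota_{v_1}\d\varpi$), though your final formulas are correct.
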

\begin{proof}
  Let $f\colon A\to A\oplus A^*,\ v\mapsto (v,\iota_v\varpi)$ be the
  obvious extension of the action map. Then $f(L)^\perp=f(A)+T^*N$,
  where $T^*N$ is embedded as the annihilator of $L$ in $A^*$, and
  hence $ f(L)^\perp/f(L)=f(A)/f(L)\oplus T^*N=TN\oplus T^*N$.  For
  $v_1,v_2\in \Gamma(A)$ and if $\alpha_1,\alpha_2\in \Gamma(T^*N)\cong\Gamma(A^*)_{\on{basic}}$ we have
\[ \begin{split}
\Cour{f(v_1)+\alpha_1,f(v_2)+\alpha_2}&=
([v_1,v_2]_A,\L_{v_1}\iota_{v_2}\varpi-\iota_{v_2}\d\iota_{v_1}\varpi+L_{v_1}\alpha_2-\iota_{v_2}\d 
\alpha_1)\\
&=([v_1,v_2]_A,\ \iota_{[v_1,v_2]_A}\varpi+\iota_{v_2}\iota_{v_1}\a^*\eta+L_{v_1}\alpha_2-\iota_{v_2}
\d \alpha_1)\\
&=f([v_1,v_2]_A)+\iota_{v_2}\iota_{v_1}\a^*\eta+L_{v_1}\alpha_2-\iota_{v_2}\d \alpha_1.
\end{split}\]
This shows that the Courant bracket on $\Gamma(f(L)^\perp/f(L))$ is
the $\eta$-twisted Courant bracket on $TN\oplus T^*N$.
\end{proof}

\section{The Atiyah algebroid $A\to G$}
\subsection{The bundle of twisted loop algebras}
Let $G$ be a Lie group. For $g\in G$ define the twisted loop algebra
\[ L_g=\{\xi\in C^\infty(\R,\g)|\ \xi(t+1)=\Ad_g\xi(t)\},\]
with bracket $[\xi_1,\xi_2]_L(t)=-[\xi_1(t),\xi_2(t)]_\g$ \emph{minus}
\footnote{The sign change will be convenient for what follows. It is
  related to the appearance of the minus sign in Example
  \ref{ex:princ}.} the pointwise Lie bracket on $C^\infty(\R,\g)$.
Let $L\to G$ be the Lie algebra bundle with fibers $L_g$.  (The
isomorphism type of the fiber $L_g$ may depend on the connected
component of $G$ containing $g$.)

\begin{remark}
  Let us discuss briefly the local triviality of $L$.  Consider a
  connected component of $G$, with base point $g_0$.  For any $g$ in
  the same connected component, the choice of any path
  $\gamma=\gamma_{g}\in C^\infty([0,1],G)$ from $\gamma(0)=g_0$ to
  $\gamma(1)=g$, with $\gamma$ constant near $t=0,1$, defines a Lie
  algebra isomorphism
\[ L_{g_0}\to L_g,\ \xi\mapsto \ti{\xi}\]
where $\ti{\xi}(t)=\Ad_{\gamma(t)}\xi(t)$ for $t\in [0,1]$.  One may
take $\gamma_{g}(t)$ to depend smoothly on $g,t$ (as $g$ varies in a
small open subset), thus obtaining local trivializations of $L$.  The
smooth sections of $L$ are thus functions $\xi\in
C^\infty(G\times\R,\g)$ satisfying $\xi(g,t+1)=\Ad_g \xi(g,t)$.
\end{remark}

\subsection{The Lie algebroid $A\to G$}
Let $\theta^L,\theta^R\in \Om^1(G,\g)$ be the Maurer-Cartan forms on
$G$. We will work with the right trivialization of the tangent bundle,
$TG\to G\times\g,\ X\mapsto \iota_X\theta^R$. Note that if $X,Y\in\mf{X}(G)$
correspond to $v=\iota_X \theta^R,\ w=\iota_Y\theta^R$, then $[X,Y]$
corresponds to 
\[ \iota_{[X,Y]}\theta^R=-[v,w]_\g+X v-Yw\]
where the subscript $\g$ indicates the pointwise bracket. For $g\in G$ let
\[ A_g=\{\xi\in C^\infty(\R,\g)|\ \exists v_\xi\in \g\colon 
\xi(t+1)=\Ad_g\xi(t)+v_\xi\}.\]
We obtain an exact sequence
\[ 0\to L_g\to A_g \xrightarrow{\a} T_gG\to 0\]
where the anchor map $\a$ is defined by $\iota_{\a(\xi)}\theta^R=v_\xi$. 
Let $A\to G$ be the bundle with fibers $A_g$. 

\begin{proposition}
The bundle $A$ with anchor map $\a$ is a
transitive Lie algebroid over $G$, with bracket on 
\[ \Gamma(A)=\{\xi\in C^\infty(G\times \R,\g)|\ \exists v_\xi\in C^\infty(G,\g)\colon 
\xi(t+1)=\Ad_g\xi(t)+v_\xi\}.\]
given by
\[ [\xi,\zeta]_A=-[\xi,\zeta]_\g+X\zeta-Y\xi.\]
Here $X,Y\in\mf{X}(G)$ are determined by $v_\xi=\iota_X\theta^R,\ 
v_\zeta=\iota_Y\theta^R$. 
\end{proposition}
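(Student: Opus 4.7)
The plan is to verify each Lie algebroid axiom directly on $\Gamma(A)$, with the Jacobi identity as the main computational step.

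\textbf{Bundle structure and surjectivity of the anchor.} Local triviality of $A\to G$ can be established by a minor variant of the construction of local trivializations of $L$ from the preceding Remark: a smooth family of paths $\gamma_g$ from a basepoint $g_0$ to $g$ (constant near endpoints) induces a fiber isomorphism $A_{g_0}\to A_g$ sending $\xi$ to $\widetilde\xi(t)=\Ad_{\gamma(t)}\xi(t)$ on $[0,1]$, extended by the quasi-periodicity rule with the shift vector transforming accordingly. For surjectivity of $\a$, given $X\in\mf{X}(G)$ with $v=\iota_X\theta^R\in C^\infty(G,\g)$, I would pick a smooth function $\chi\colon\R\to[0,1]$ equal to $0$ for $t\le 0$ and $1$ for $t\ge 1$, define $\xi(g,t)=\chi(t)v(g)$ on $[0,1]$, and extend by $\xi(g,t+1)=\Ad_g\xi(g,t)+v(g)$; constancy of $\chi$ near integers ensures smoothness, and $\a(\xi)=X$ by construction.

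\textbf{Well-definedness of the bracket, anchor compatibility, Leibniz, antisymmetry.} For $\xi,\zeta\in\Gamma(A)$ with associated vector fields $X,Y$, the key identity is
\[X\cdot(\Ad_g\phi(g))=[v_\xi,\Ad_g\phi(g)]_\g+\Ad_g(X\phi)(g)\]
for any $\g$-valued function $\phi$ on $G$, obtained from $\frac{d}{ds}\big|_{s=0}\Ad_{\exp(sv_\xi)g}=\ad_{v_\xi}\Ad_g$. Applying this identity to $(X\zeta)(g,t+1)$ and $(Y\xi)(g,t+1)$ and expanding $-[\xi,\zeta]_\g(g,t+1)=-[\Ad_g\xi(g,t)+v_\xi,\Ad_g\zeta(g,t)+v_\zeta]_\g$, the cross terms cancel and one obtains
\[[\xi,\zeta]_A(g,t+1)=\Ad_g[\xi,\zeta]_A(g,t)+\bigl(-[v_\xi,v_\zeta]_\g+Xv_\zeta-Yv_\xi\bigr).\]
By the formula for $\iota_{[X,Y]}\theta^R$ displayed just before the proposition, the parenthesis equals $\iota_{[X,Y]}\theta^R$; hence $[\xi,\zeta]_A\in\Gamma(A)$ and the anchor respects brackets on sections. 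Antisymmetry is immediate. For Leibniz, observing that $v_{f\zeta}=fv_\zeta$ so that the vector field associated to $f\zeta$ is $fY$, a direct expansion gives $[\xi,f\zeta]_A=f[\xi,\zeta]_A+(Xf)\,\zeta=f[\xi,\zeta]_A+\a(\xi)(f)\,\zeta$.

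\textbf{Jacobi identity.} This is the main obstacle. My plan is to embed $\Gamma(A)$ into a larger space in which Jacobi reduces to a direct algebraic cancellation. Let $B$ denote the vector space of pairs $(X,\xi)$ with $X\in\mf{X}(G)$ and $\xi\in C^\infty(G\times\R,\g)$, equipped with
\[[(X,\xi),(Y,\zeta)]=\bigl([X,Y],\ -[\xi,\zeta]_\g+X\zeta-Y\xi\bigr).\]
The map $\Gamma(A)\hookrightarrow B$, $\xi\mapsto(\a(\xi),\xi)$, is injective, has image closed under the bracket, and preserves brackets by the previous step, so it suffices to verify Jacobi on $B$. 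Expanding $[(X,\xi),[(Y,\eta),(Z,\zeta)]]+\on{cycl.}$, the first components reduce to Jacobi for vector fields. For the second components I would organize the terms into three groups: the triple-$\g$-bracket terms cancel by Jacobi in $\g$; the pure second-derivative terms give $\sum_{\on{cyc}}(XY\zeta-XZ\eta-[Y,Z]\xi)$, which vanishes upon recognizing $XY\zeta-YX\zeta=[X,Y]\zeta$; and the mixed terms (one $\g$-bracket, one derivative) cancel after invoking the Leibniz rule $X[\eta,\zeta]_\g=[X\eta,\zeta]_\g+[\eta,X\zeta]_\g$ and antisymmetry of $[\cdot,\cdot]_\g$, with each of the six monomials $[X\eta,\zeta]_\g,\ [\eta,X\zeta]_\g,\ [Y\zeta,\xi]_\g,\ [\zeta,Y\xi]_\g,\ [Z\xi,\eta]_\g,\ [\xi,Z\eta]_\g$ appearing with equal and opposite sign. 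The expected difficulty is purely bookkeeping, in organizing this six-fold cancellation correctly.
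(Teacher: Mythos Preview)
Your proposal is correct and follows essentially the same route as the paper: the surjectivity argument via a cutoff function, and the verification that the bracket preserves $\Gamma(A)$ via the identity $X\cdot(\Ad_g\phi)=[v_\xi,\Ad_g\phi]_\g+\Ad_g(X\phi)$, match the paper's computations almost line for line. The only difference is that the paper simply declares the Jacobi identity ``straightforward to check'', whereas you organize it by embedding $\Gamma(A)$ into the larger space $B$ of pairs $(X,\xi)$; this is a clean way to see it (indeed $B$ is just the semidirect product of $\mf{X}(G)$ acting by derivations on $C^\infty(G\times\R,\g)$ with the opposite pointwise bracket), but it is the same direct verification the paper has in mind.
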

\begin{proof}
To show that $\a$ is surjective, fix $f\in
C^\infty([0,1],\R)$ with $f(0)=0,f(1)=1$, and constant near $t=0,1$.
For $X\in T_gG$ let $\xi(t)=f(t)\iota_X\theta^R$ for $t\in
[0,1]$. Then $\xi(1)-\xi(0)=\iota_X\theta^R$, so $\xi$ extends
uniquely to an element $\xi\in A_g$ with $\a(\xi)=X$. This argument
also verifies that $A$ is locally trivial, in fact $A\cong L\oplus TG$.

To check that $[\cdot,\cdot]_A$ preserves the space
$\Gamma(A)\subset C^\infty(G\times\R,\g)$, we calculate
(at any given $g\in G$)
\[ [\xi(t+1),\zeta(t+1)]_{\g}
=\Ad_g [\xi(t),\zeta(t)]_{\g}+[\Ad_g\xi(t),v_\zeta]_{\g}+[v_\xi,\Ad_g\zeta(t)]_{\g}+[v_\xi,v_\zeta]_{\g}.\]
On the other hand, 
\[ \begin{split}(X\zeta)(g,t+1)&=\f{\p}{\p
    s}|_{s=0}\Big(\zeta\big(\exp(sv_\xi(g))g,t+1)\big)\Big)\\
  &=\f{\p}{\p
    s}|_{s=0}\Big(\Ad_{\exp(sv_\xi(g))g}\big(\zeta(\exp(sv_\xi(g))g,t)\big)+v_\zeta(\exp(sv_\xi(g))g)\Big)\\
  &=[v_\xi,\Ad_g\zeta(t)]_{\g} +\Ad_g(X\zeta)(t)+X v_\zeta, 
\end{split}\]
with a similar expression for $(Y\xi)(g,t+1)$. This verifies
\[ [\xi,\zeta]_A(t+1)=\Ad_g([\xi,\zeta]_A(t))+v_{[\xi,\zeta]_{\mf{X}}}.\]
This shows that $[\cdot,\cdot]_A$ takes values in $A$ and also that
$\a([\xi,\zeta]_A)=[\a(\xi),\a(\zeta)]$. It is straightforward to check
that $[\cdot,\cdot]_A$ obeys the Jacobi identity. 
\end{proof}

\begin{proposition}
The Lie group $G$ acts on $A\to G$ by Lie algebroid automorphisms 
covering the conjugation action on $G$. This action is given on 
sections $\xi\in\Gamma(A)$ by 
\[(k.\xi)(g,t)=\Ad_k\xi(\Ad_{k^{-1}}g,t),\ \xi\in\Gamma(A),\,k\in G,\]
and has infinitesimal generators 
\[ \g\to \Gamma(A),\ \ x\mapsto x_A=-x.\]
\end{proposition}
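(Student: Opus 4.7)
The plan is to verify in sequence that (a) the formula $(k.\xi)(g,t)=\Ad_k\xi(\Ad_{k^{-1}}g,t)$ gives a well-defined action on $\Gamma(A)$; (b) this action is by Lie algebroid automorphisms covering conjugation on $G$; and (c) the infinitesimal generators are given by $x_A=-x$. The key algebraic identity throughout is
\[ \Ad_k\,\Ad_{\Ad_{k^{-1}}g}=\Ad_{k\cdot k^{-1}gk}=\Ad_{gk}=\Ad_g\Ad_k, \]
which converts quasi-periodicity of $\xi$ at the translated point $\Ad_{k^{-1}}g$ into quasi-periodicity of $k.\xi$ at $g$.

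For (a), I would substitute $k.\xi$ into the quasi-periodicity relation. Applying the identity above to $\xi(\Ad_{k^{-1}}g,t+1)=\Ad_{\Ad_{k^{-1}}g}\xi(\Ad_{k^{-1}}g,t)+v_\xi(\Ad_{k^{-1}}g)$ yields
\[ (k.\xi)(g,t+1)=\Ad_g\,(k.\xi)(g,t)+\Ad_k\, v_\xi(\Ad_{k^{-1}}g). \]
Hence $k.\xi\in\Gamma(A)$ with $v_{k.\xi}(g)=\Ad_k v_\xi(\Ad_{k^{-1}}g)$, which is precisely the transformation law for the right trivialization of the pushforward of $\a(\xi)$ under the conjugation diffeomorphism $g\mapsto kgk^{-1}$. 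This immediately gives the anchor equivariance required by (b).

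For the bracket preservation part of (b), I apply $k.$ termwise to $[\xi,\zeta]_A=-[\xi,\zeta]_\g+X\zeta-Y\xi$. The pointwise bracket is $\Ad$-equivariant; the vector field $X$ with right trivialization $v_\xi$ pushes forward under conjugation to the vector field corresponding to $v_{k.\xi}$ by the computation already done in (a); and directional derivatives transform naturally under any diffeomorphism. Combining these three facts gives $k.[\xi,\zeta]_A=[k.\xi,k.\zeta]_A$.

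For (c), I observe that the constant section $x_A(g,t):=-x$ has $v_{x_A}(g)=-x+\Ad_g x=-(x-\Ad_g x)$, so $\a(x_A)=-x_G$, where $x_G$ is the conjugation generator with right trivialization $x-\Ad_g x$. The bracket formula then delivers
\[ [x_A,\xi]_A=-[-x,\xi]_\g+X_{x_A}\zeta-Y_\xi(-x)=[x,\xi]_\g-x_G\xi. \]
On the other hand, differentiating the action formula via the chain rule in both arguments yields $\frac{d}{du}\big|_{u=0}(\exp(ux).\xi)(g,t)=[x,\xi(g,t)]_\g-(x_G\xi)(g,t)$, matching the bracket computation. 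The main obstacle is simply bookkeeping of signs: the minus in $x_A=-x$ is forced by the convention that $[\cdot,\cdot]_A$ involves \emph{minus} the pointwise bracket (cf.\ Example \ref{ex:princ}), and without this convention both the bracket preservation check and the identification of generators would be off by a sign.
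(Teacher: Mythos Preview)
Your proof is correct and follows essentially the same approach as the paper's: the same quasi-periodicity check for well-definedness and anchor equivariance, the same (declared straightforward) verification of bracket preservation, and the same bracket computation $[x_A,\xi]_A$ compared with the $u$-derivative of $\exp(ux).\xi$. Two minor points: there is a typo $X_{x_A}\zeta$ (should be $\xi$), and your sign convention for the conjugation generator $x_G$ (right trivialization $x-\Ad_g x$) is opposite to the paper's $x_G=x^L-x^R$ (right trivialization $\Ad_g(x)-x$), so where the paper gets $\a(x_A)=x_G$ and $[x_A,\xi]_A=[x,\xi]_\g+x_G\xi$, you get the mirror identities; your argument is internally consistent, but be aware of this when matching against the paper's later formulas.
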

\begin{proof}
Let $X\in \mf{X}(G)$ be the vector field such that
$\iota_X\theta^R=v_\xi$, and let $k.X:=(\d\Ad_k)(X)$ its push-forward
under the conjugation action. Then 
\[ (\iota_{k.X}\theta^R)(g)=\Ad_k\big((\iota_X\theta^R)(\Ad_{k^{-1}}g)\big).\]
The following calculation shows that the action is well-defined, and that the anchor map is equivariant: 
\[ 
\begin{split}
  (k.\xi)(g,t+1)&=\Ad_k\big(\xi(\Ad_{k^{-1}}g,t+1)\big)\\
  &=\Ad_k\big(\Ad_{k^{-1}gk}\xi(\Ad_{k^{-1}}g,t)+(\iota_X\theta^R)(\Ad_{k^{-1}}g)\big)\\
  &=\Ad_g\big((k.\xi)(g,t)\big)+(\iota_{k.X}\theta^R)(g)
\end{split}
\]
It is straightforward to check that
$k.[\xi,\zeta]_A=[k,\xi,k.\zeta]_A$.  For $x\in\g$ the generating
vector field $x_G=x^L-x^R$ for the conjugation action on $G$ satisfies
$\iota_{x_G}\theta^R=\Ad_g(x)-x$. Hence $x_A(g,t):=-x$
defines a section of $A$, with $\a({x_A})=x_G$. For all $\xi\in\Gamma(A)$, 
\[ [{x_A},\xi]_A(g,t)=[x,\xi(g,t)]_\g+x_G \xi 
=\f{\p}{\p u}\Big|_{u=0} (\exp(ux).\xi)(g,t), 
\]
confirming that the map $x\mapsto {x_A}$ gives generators for the action. 
\end{proof}

\subsection{The bundle $PG\to G$}
We will now interpret $A\to G$ as the Atiyah algebroid of a principal
bundle over $G$. Suppose first that $G$ is connected. Let $\pi\colon
PG\to G$ be the bundle with fibers,
\[ (PG)_g=\{\gamma\in C^\infty(\R,G)|\ 
\gamma(t+1)=g\gamma(t)\}.\]
By an argument similar to that for $L_g$, one sees that $PG\to G$ is a
locally trivial bundle. In fact it is a principal bundle with fiber
the loop group $LG=\pi^{-1}(e)$. We will argue that $A\to G$ may be
regarded as the Atiyah algebroid of the principal $LG$-bundle $PG\to
G$.  Let $\gamma\in (PG)_g$.  Given a family of paths $\gamma_s\in PG$
with $\gamma_0=\gamma$, let $\zeta\colon \R\to \g$ be defined as
\[ \zeta(t)=\f{\p}{\p s}|_{s=0} \big(\gamma_s(t)\gamma(t)^{-1}\big).\]
Put $g_s=\pi(\gamma_s)$, so that $g_s=\gamma_s(t+1)\gamma_s(t)^{-1}$
for all $t$. We may write $\gamma_s(t)=\exp(s\zeta_s(t))\gamma(t)$, so that
$\zeta_0(t)=\zeta(t)$. Then 
\[\begin{split}
 g_s&=\exp(s\zeta_s(t+1))\gamma(t+1)\gamma(t)^{-1}\exp(-s\zeta_s(t))\\&=
\exp(s\zeta_s(t+1))g\exp(-s\zeta_s(t)).\end{split}\]
We find, 
\[ \f{\p}{\p s}|_{s=0} \big(g_s g^{-1}\big)
=\zeta(t+1)-\Ad_g \zeta(t).\]
This identifies $A_g$ as the space of maps for which $\zeta(t+1)-\Ad_g
\zeta(t)$ is constant. The formula for $[\cdot,\cdot]_A$ is the
expected bracket on $LG$-invariant vector fields on $PG$. However,
rather than attempting to construct Lie brackets of vector fields on
infinite-dimensional manifolds, we will take this formula simply as a
definition.

\begin{remark}
  If $G$ is disconnected, the condition $\gamma(t+1)=g\gamma(t)$
  implies that $g$ is in the identity component. One may however
  extend the definition, as follows: For any given component of $G$,
  pick a base point $g_0$, and take $(PG)_g$ (with $g$ in the
  component of $g_0$ to consist of paths $\gamma$ such that
  $\gamma(t+1)=g\gamma(t)g_0^{-1}$. Then $PG\to G$ is a principal
  $L_{g_0}$-bundle over the given component.
\end{remark}

\subsection{Connections on $A\to G$}
Let us next discuss connections on the Atiyah algebroid over $ G$.  It
will be convenient to describe $\theta$ in terms of the horizontal
lift, $\on{Hor}^\theta\colon TG\to A\subset C^\infty(\R,\g)$. Write
$\on{Hor}^\theta=-\alpha$, and think of $\alpha$ as a family of
1-forms $\alpha_t\in\Om^1(G,\g)$.
\begin{lemma}
A family of 1-forms $\alpha_t$ defines a horizontal lift $TG\to A$ if
and only if 
\begin{equation}\label{eq:alpha}
 \alpha_{t+1}=\Ad_g\alpha_t-\theta^R=:g\bullet \alpha_t.
\end{equation}
Here $\bullet$
denotes the `gauge action' of the identity map $g\in C^\infty(G,G)$ 
on $\Om^1(G,\g)$. The resulting connection is $G$-equivariant if and
only if $\alpha_t\in\Om^1(G,\g)^G$. 
\end{lemma}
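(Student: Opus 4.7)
The plan is to unwind the definitions, interpreting the quasi-periodicity condition cutting out $A_g \subset C^\infty(\R,\g)$ as a condition on the family $\alpha_t$.

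First I would translate the requirement that $\on{Hor}^\theta$ lands in $A$. Fix $g \in G$ and $X \in T_g G$, and consider the path $t \mapsto -\alpha_t(X) \in \g$. By definition of $A_g$, this element lies in $A_g$ if and only if the difference $-\alpha_{t+1}(X) - \Ad_g(-\alpha_t(X))$ is independent of $t$, and in that case the anchor produces the element of $T_g G$ corresponding to this constant under the right trivialization. Since $\on{Hor}^\theta$ must split the anchor map, the constant is forced to equal $\iota_X \theta^R$. Rearranging gives
\[ \alpha_{t+1}(X) = \Ad_g \alpha_t(X) - \iota_X \theta^R, \]
which globally reads $\alpha_{t+1} = \Ad_g \alpha_t - \theta^R$, i.e.\ $\alpha_{t+1} = g \bullet \alpha_t$. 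The converse is immediate: any family satisfying \eqref{eq:alpha} defines, pointwise, an element of $A_g$, and the assignment $X \mapsto -\alpha(X)$ is visibly $C^\infty(G)$-linear, so it defines a splitting of the anchor.

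For the equivariance statement, I would use that in the right trivialization the conjugation action of $k \in G$ sends $X \in T_g G$ to $k \cdot X \in T_{kgk^{-1}}G$ with $\iota_{k\cdot X} \theta^R = \Ad_k \iota_X \theta^R$. Combined with the $G$-action on $\Gamma(A)$ recorded in the previous proposition, $(k.\xi)(g,t) = \Ad_k \xi(\Ad_{k^{-1}}g, t)$, the identity $\on{Hor}^\theta(k \cdot X) = k.\on{Hor}^\theta(X)$ translates directly into
\[ \alpha_t(k \cdot X)\big|_{kgk^{-1}} = \Ad_k\, \alpha_t(X)\big|_g, \]
which is precisely the condition $\alpha_t \in \Om^1(G,\g)^G$.

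There is no serious obstacle; the main thing to watch is bookkeeping of signs, in particular the convention $\on{Hor}^\theta = -\alpha$ (which is consistent with the minus sign in the bracket on $L$) and the fact that the constant appearing in the quasi-periodicity condition must match $\iota_X \theta^R$ rather than its negative, thereby producing $-\theta^R$ on the right-hand side of \eqref{eq:alpha}.
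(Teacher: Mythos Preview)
Your proposal is correct and follows essentially the same approach as the paper: you unwind the quasi-periodicity condition defining $A_g$ on the path $t\mapsto -\alpha_t(X)$ and match the resulting constant with $\iota_X\theta^R$, exactly as the paper does. Your treatment of the equivariance statement is more detailed than the paper's (which simply declares it ``clear''), but the argument is the same in substance.
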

\begin{proof}
 The condition for $\on{Hor}^\theta=-\alpha$ to define a horizontal lift is that
for all $X\in\mf{X}(G)$, 
\[ -\iota_X\alpha_{t+1}=-\Ad_g(\iota_X\alpha_t)+\iota_X\theta^R
=-\iota_X(\Ad_g(\alpha_t)-\theta^R|_g)
\]
for all such $X$. This gives the condition on $\alpha_t$. It is clear
that $\on{Hor}^\theta$ is $G$-equivariant exactly if $\alpha$ is
$G$-equivariant.
\end{proof}

The connection $\theta=\theta^\alpha\colon A\to L$ defined by $\alpha$ is 
$\theta(\xi)=\xi+\alpha(\a(\xi))$. 
Let $F^{\alpha_t}=\d\alpha_t+\hh [\alpha_t,\alpha_t]_\g$ be the curvature
of $\alpha_t$. By the property of the curvature under gauge
transformations, 
\[ F^{\alpha_{t+1}}=F^{g\bullet\alpha_t}=\Ad_g F^{\alpha_t}.\]
\begin{proposition}\label{prop:curv}
The curvature $F^\theta\in \Om^2(G,L)$ of the connection
$\theta(\xi)=\xi+\alpha(\a(\xi))$ is given by 
\[ F^\theta(X,Y)(t)=F^{\alpha_t}(X,Y),\ \ X,Y\in\mf{X}(G).\]
If $\alpha$ is $G$-invariant, then the corresponding map
$\Psi\colon\g\to \Gamma(L)$ (cf. \ref{subsec:equivariant}) is
\[ \Psi(x)=-x+\iota(x_N)\alpha.\]
\end{proposition}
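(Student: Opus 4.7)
The plan is to exploit horizontality of $F^\theta\in\Gamma(\wedge^2 A^*\otimes L)_{\on{hor}}$: by Lemma \ref{lemma}, the 2-form is determined by its values on pairs of horizontal lifts $\on{Hor}^\theta(X)=-\alpha(X)$, $\on{Hor}^\theta(Y)=-\alpha(Y)$ for vector fields $X,Y\in\mf{X}(G)$. Using the defining formula $F^\theta=\d\theta-\hh[\theta,\theta]_A$ and the fact that $\theta$ annihilates horizontal sections, the bracket contribution drops out and one is left with
\[ F^\theta(\on{Hor}^\theta(X),\on{Hor}^\theta(Y))=-\theta\bigl([-\alpha(X),-\alpha(Y)]_A\bigr).\]
Thus the whole problem reduces to expanding the Lie algebroid bracket of two horizontal lifts and applying $\theta(\xi)=\xi+\alpha(\a(\xi))$.

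Next I would plug $\xi=-\alpha(X)$, $\zeta=-\alpha(Y)$ into the bracket formula of the previous proposition. Since $v_\xi=\iota_X\theta^R$ and $v_\zeta=\iota_Y\theta^R$, the vector fields appearing in the bracket formula are just $X$ and $Y$ themselves. The pointwise $\g$-bracket term yields $-[\alpha_t(X),\alpha_t(Y)]_\g=-\hh[\alpha_t,\alpha_t]_\g(X,Y)$, while the derivative terms $X(-\alpha(Y))-Y(-\alpha(X))$, combined through the Cartan-Koszul identity $\d\alpha_t(X,Y)=X\alpha_t(Y)-Y\alpha_t(X)-\alpha_t([X,Y])$, contribute $-\d\alpha_t(X,Y)-\alpha_t([X,Y])$. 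Assembling these,
\[ [-\alpha(X),-\alpha(Y)]_A(g,t)=-F^{\alpha_t}(X,Y)-\alpha_t([X,Y]).\]

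Now apply $\theta$. Using that the anchor is a morphism of Lie algebroids, $\a([-\alpha(X),-\alpha(Y)]_A)=[X,Y]$, so the correction term $\alpha_t(\a(\cdot))=\alpha_t([X,Y])$ precisely cancels the $-\alpha_t([X,Y])$ above, leaving
\[ \theta\bigl([-\alpha(X),-\alpha(Y)]_A\bigr)=-F^{\alpha_t}(X,Y),\]
which upon the outer sign change yields $F^\theta(X,Y)(t)=F^{\alpha_t}(X,Y)$, proving the first claim.

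For the statement about $\Psi$, I would simply unwind the definition from \S\ref{subsec:equiv}: by construction $\Psi(x)=\iota_x\theta=\theta(x_A)$. By the preceding proposition, $x_A(g,t)=-x$ and $\a(x_A)=x_G=x_N$, so
\[ \Psi(x)=x_A+\alpha(\a(x_A))=-x+\alpha(x_N)=-x+\iota(x_N)\alpha.\]
The main obstacle is purely bookkeeping of signs — the sign flip in the definition of $[\cdot,\cdot]_L$, the minus in $\on{Hor}^\theta=-\alpha$, and the right-trivialization convention for $\theta^R$ all enter — but once the Cartan-Koszul formula is applied cleanly, all corrections align and the expected Maurer-Cartan-type curvature $F^{\alpha_t}$ emerges.
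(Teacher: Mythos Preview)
Your proof is correct and follows essentially the same route as the paper. The paper phrases the curvature directly via $F^\theta(X,Y)=\on{Hor}^\theta([X,Y])-[\on{Hor}^\theta(X),\on{Hor}^\theta(Y)]_A$, while you reach the equivalent expression $-\theta\bigl([\on{Hor}^\theta(X),\on{Hor}^\theta(Y)]_A\bigr)$ from $F^\theta=\d\theta-\hh[\theta,\theta]$ evaluated on horizontal lifts; the ensuing bracket expansion and the computation of $\Psi(x)=\theta(x_A)$ are the same in both.
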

\begin{proof}
This follows from the definition of the curvature in terms of
horizontal lifts: 
\[\begin{split} F^\theta(X,Y)&=\on{Hor}_\theta([X,Y])-[\on{Hor}_\theta(X),\on{Hor}^\theta(Y)]_A\\
&=-\alpha([X,Y])-[\alpha(X),\alpha(Y)]_A\\
&=-\alpha([X,Y])+[\alpha(X),\alpha(Y)]_\g+X\alpha(Y)-Y\alpha(X)\\
&=(\d\alpha+\hh[\alpha,\alpha])(X,Y).\end{split}\]
If $\alpha$ is $G$-invariant, so that $\theta$ is $G$-equivariant, the
map $\Psi(x)=-\iota_{x_A}\theta$ is given as
\[ \Psi(x)=-\iota_{x_A}\theta=x_A-\on{Hor}^\theta(x_N)=-x+\iota(x_N)\alpha.\]
\end{proof}
To construct a family of 1-forms $\alpha_t\in\Om^1(G,\g)$ with the
transformation property \eqref{eq:alpha}, take any $\alpha_0$ (for
example $\alpha_0=0$), and put
$\alpha_n=g^n\bullet \alpha_0$. Pick a smooth function $f\colon [0,1]\to \R$ such
that $f(t)=0$ near $t=0$ and $f(t)=1$ near $t=1$, and let 
\begin{equation}\label{eq:standardalpha}
\alpha_t =\alpha_n+f(t-n)(\alpha_{n+1}-\alpha_n)\end{equation}
for $n\le t\le n+1$. The resulting $\alpha_t$ is smooth, and has the
desired transformation property. If $\alpha_0\in\Om^1(G,\g)$ is $G$-invariant, then
\[ k^*\alpha_n=(\Ad_k(g))^n\bullet k^*\alpha_0
=(\Ad_k(g^n))\bullet \Ad_k(\alpha_0)=\Ad_k(\alpha_n),\]
hence $\alpha_t\in\Om^1(G,\g)$ is $G$-invariant for all $t$.

\section{The lifting problem for $A\to G$}
An invariant inner product on $\g$ defines central extensions
$\wh{L}_g$ of the twisted loop algebras $L_g$. In this Section, we
will work out the 2-form $\varpi\in \Gamma(\wedge^2 A^*)$ defined by
the lifting problem, and discuss some of its properties.

\subsection{Central extensions}
Suppose the Lie algebra $\g$ carries an invariant symmetric bilinear
form $\cdot$ (possibly indefinite, or even degenerate). This then
defines a central extension
\begin{equation}\label{eq:L} 0\to G\times \R\to \wh{L}\to L\to 0,\end{equation}
where $\wh{L}_g=L_g\oplus \R$ with bracket, 
\[ [(\xi_1,s_1),(\xi_2,s_2)]_{\wh{L}}=\Big(-[\xi_1,\xi_2]_\g,\
\int_0^1\dot{\xi}_1\cdot\xi_2\Big).\]
Here $\dot{\xi}=\f{\p \xi}{\p t}$, and the integral is relative to the
measure $\d t$. The $G$-action on $L$ lifts to an action on $\wh{L}$,
by $k.(\xi,s)=(k.\xi,s)$. 
\begin{proposition}
  The representation of $A$ on $L$ (given by $\nabla_\xi\zeta=[\xi,\zeta]_A$)
lifts to the Lie algebra bundle
  $\wh{L}$, by the formula,
\[ \wh{\nabla}_\xi(\zeta,s)=\Big(\nabla_\xi\zeta,\ \a(\xi)s+\int_0^1 \dot\xi\cdot\zeta  \Big)\]
for $\xi\in\Gamma(A),\ (\zeta,s)\in \Gamma(\wh{L})$. This
representation is equivariant relative to the $G$-actions on $A,\
\wh{L}$. 
\end{proposition}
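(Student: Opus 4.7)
There are three things to check: well-definedness together with the Leibniz rule; that $\wh\nabla$ acts by derivations of $[\cdot,\cdot]_{\wh L}$ and is flat (i.e.\ defines a representation); and $G$-equivariance. The integrand $\dot\xi\cdot\zeta$ is $1$-periodic in $t$ — differentiating $\xi(t+1)=\Ad_g\xi(t)+v_\xi$ gives $\dot\xi(t+1)=\Ad_g\dot\xi(t)$, which combined with $\zeta(t+1)=\Ad_g\zeta(t)$ and $\Ad$-invariance of $\cdot$ gives the periodicity — so the integral defines a smooth function on $G$. The Leibniz rule is immediate: an $f\in C^\infty(G)$ is $t$-constant, so pulls out of the integral, and $\a(\xi)(fs)=f\a(\xi)(s)+s\a(\xi)(f)$ provides the correction.

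The heart of the proof is verifying the representation and derivation properties, both by direct computation. Three ingredients are enough: the explicit bracket $[\xi,\zeta]_A=-[\xi,\zeta]_\g+X_\xi\zeta-X_\zeta\xi$ (with $X_\xi,X_\zeta\in\mf X(G)$ determined by $v_\xi,v_\zeta$); pointwise $\Ad$-invariance $[a,b]_\g\cdot c+b\cdot[a,c]_\g=0$; and the fact that $X_\xi=\a(\xi)$, as a vector field on $G$ alone, commutes with $\int_0^1(\cdot)\,\d t$ and obeys the product rule $X_\xi(f\cdot h)=(X_\xi f)\cdot h+f\cdot(X_\xi h)$ (since $\cdot$ is a constant bilinear form on $\g$). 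For the derivation property, expanding both sides of
\[ \wh\nabla_\xi[(\zeta_1,s_1),(\zeta_2,s_2)]_{\wh L} = [\wh\nabla_\xi(\zeta_1,s_1),(\zeta_2,s_2)]_{\wh L}+[(\zeta_1,s_1),\wh\nabla_\xi(\zeta_2,s_2)]_{\wh L} \]
reduces the first components to the known derivation property of $\nabla$ on $\Gamma(L)$, while the central components reduce to an identity in which invariance cancels the pair $[\xi,\dot\zeta_1]_\g\cdot\zeta_2$ against $\dot\zeta_1\cdot[\xi,\zeta_2]_\g$ and rewrites $[\dot\xi,\zeta_1]_\g\cdot\zeta_2$ as $\dot\xi\cdot[\zeta_1,\zeta_2]_\g$, producing exactly the required equation. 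Flatness $\wh\nabla_{[\xi_1,\xi_2]_A}=[\wh\nabla_{\xi_1},\wh\nabla_{\xi_2}]$ is an entirely analogous computation: after substituting the formula for $[\xi_1,\xi_2]_A$ and using the product rule for $X_{\xi_1},X_{\xi_2}$ under the integral, invariance arranges all pairs to cancel correctly.

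For $G$-equivariance: the $G$-action fixes the central summand, $\nabla$ on $L$ is already $G$-equivariant, and the integral is $G$-equivariant because $\Ad_k\dot\xi\cdot\Ad_k\zeta=\dot\xi\cdot\zeta$ pointwise by $\Ad$-invariance. The one genuine obstacle is organising the central-component computation so that all invariance cancellations land cleanly; notably no integration by parts is ever needed — because $X_\xi$ is a vector field on $G$ alone and the form is constant, everything is handled pointwise in $t$, and $1$-periodicity ensures boundary terms never appear.
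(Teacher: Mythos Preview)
Your proposal is correct and follows essentially the same route as the paper: both verify flatness and the derivation property by direct computation, using the explicit bracket $[\xi,\zeta]_A=-[\xi,\zeta]_\g+X_\xi\zeta-X_\zeta\xi$, $\Ad$-invariance of $\cdot$, and the product rule for the anchor vector fields under the integral, with $G$-equivariance then being immediate. Your remark that no integration by parts is needed is accurate (the paper's argument avoids it too); the only cosmetic differences are that the paper packages the flatness step via the identity $\int_0^1[\xi_1,\dot\xi_2]_A\cdot\zeta=-\int_0^1\dot\xi_2\cdot[\xi_1,\zeta]_A+\a(\xi_1)\int_0^1\dot\xi_2\cdot\zeta$ rather than fully expanding, and that you add the periodicity/well-definedness check which the paper leaves implicit.
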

\begin{proof}
  We first verify that this formula defines an $A$-representation.
  Clearly $\xi\mapsto \wh{\nabla}_{\xi}$ is $C^\infty(N)$-linear. For
  $\xi_1,\xi_2\in \Gamma(A)$ and $\zeta\in \Gamma(L)$, we have
\[ \int_0^1 [\xi_1,\dot{\xi}_2]_A\cdot\zeta=-\int_0^1 \dot{\xi}_2\cdot[\xi_1,\zeta]_A
+\a(\xi_1)\int_0^1 \dot{\xi}_2\cdot \zeta,\]
by the definition of the bracket on $A$ and the $\Ad$-invariance
of $\cdot$. Note that $\a(\dot{\xi})=0$ for all $\xi \in \Gamma(A)$.
Subtracting a similar equation with $1\leftrightarrow 2$ interchanged, 
one obtains 
\[ \int_0^1 \f{\p}{\p t}([\xi_1,\xi_2]_A) \cdot \zeta=
\a(\xi_1)\int_0^1 \dot{\xi}_2\cdot\zeta
-  \a(\xi_2)\int_0^1 \dot{\xi}_1\cdot\zeta+
\int_0^1 (\dot{\xi}_1\cdot\nabla_{\xi_2}\zeta
-\dot{\xi}_2\cdot
\nabla_{\xi_1}\zeta)
\]
which easily implies the property
$\wh\nabla_{\xi_1}\wh\nabla_{\xi_2}-\wh\nabla_{\xi_2}\wh\nabla_{\xi_1}=\wh\nabla_{[\xi_1,\xi_2]_A}$.
We next check that this representation acts by derivations of the Lie
bracket on $\Gamma(\wh{L})$. We have
\[\begin{split}
  [\wh\nabla_\xi(\zeta_1,s_1),(\zeta_2,s_2)]_{\wh{L}}
  &=\Big(-[\nabla_\xi\zeta_1,\zeta_2]_\g,\ \int_0^1 \big(
  -\f{\p}{\p t}[\xi,\zeta_1]_\g+\a(\xi)\dot{\zeta}_1\big)\cdot \zeta_2 \Big),\\
  [(\zeta_1,s_1),\wh\nabla_\xi(\zeta_2,s_2)]_{\wh{L}}&=\Big(-[\zeta_1,\nabla_\xi\zeta_2]_\g,\ 
  \int_0^1 \dot{\zeta}_1\cdot \big(-[\xi,\zeta_2]_\g
  +\a(\xi)\zeta_2\big)\Big),
\end{split}\]
which adds up to
\[ \wh\nabla_\xi[(\zeta_1,s_1),(\zeta_2,s_2)]_{\wh{L}}=\Big(-\nabla_\xi[\zeta_1,\zeta_2]_\g,\ \a(\xi)\int_0^1
\dot{\zeta}_1\cdot\zeta_2   -\int_0^1
\dot{\xi}\cdot[\zeta_1,\zeta_2]_\g \Big)
\]
as required. Equivariance of the action is clear.
\end{proof}


By definition, $\wh{L}$ comes with the $G$-equivariant splitting
$j\colon L\to \wh{L},\ \xi\mapsto (\xi,0)$, with associated cocycle
\[ \sig(\xi_1,\xi_2)=-\int_0^1 \dot{\xi}_1\cdot \xi_2.\  
\] 
Let $\alpha_t\in\Om^1(G,\g)$ be a family of 1-forms with the
transformation property \eqref{eq:alpha} and let $\theta^\alpha\colon
A\to L$ the corresponding connection.  Using the results from the last
Section, we obtain a 2-form $\varpi^\alpha\in \Gamma(\wedge^2 A^*)$
and a closed 3-form $\eta^\alpha\in \Om^3(G)$, whose cohomology class
is the obstruction to the existence of a lift $\wh{A}$.  If $\alpha$
is $G$-equivariant, we also obtain an equivariant extension
$\eta_G^\alpha$ of the 3-form.  We will now derive explicit formulas.

\subsection{The 2-form $\varpi^\alpha$}\label{subsec:kappat}
To begin, we need the covariant derivative $\d^{\theta^\alpha}
j\in\Om^1(G,L^*)$ of the splitting. Note that the derivative
$\dot{\alpha}_t$ satisfies $\dot{\alpha}_{t+1}=\Ad_g\dot{\alpha}_t$,
so it defines an element $\dot{\alpha}\in \Om^1(G,L)$.
\begin{lemma}\label{lem:zeta}
For $\zeta\in \Gamma(L)$ one has
\[ \l\d^{\theta^\alpha} j,\zeta\r=-\int_0^1 \dot{\alpha}\cdot\zeta\  .\]
\end{lemma}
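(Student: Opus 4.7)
The plan is to invoke Proposition 3.3, which gives $\d^{\theta^\alpha} j = \d j + \sig(\theta^\alpha,\cdot)$ and asserts that this is horizontal, and then to contract both sides with an arbitrary section $\xi\in\Gamma(A)$. Since the target is horizontal, the identity will hold after contraction with $\a(\xi)\in\mf{X}(G)$ of the claimed right-hand side $-\int_0^1\dot\alpha\cdot\zeta\in\Om^1(G)$ viewed in $\Gamma(\wedge^1A^*)_{\on{hor}}$ via $\a^*$.

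For the first summand, I would use $\iota_\xi\l\d j,\zeta\r=(\L_\xi j)(\zeta)=\L_\xi(j(\zeta))-j(\L_\xi\zeta)$. Here $\L_\xi$ on $\Gamma(\wh L)$ is $\wh\nabla_\xi$ as constructed in the previous proposition, while $\L_\xi\zeta=\nabla_\xi\zeta=[\xi,\zeta]_A\in\Gamma(L)$. The explicit formula
\[
\wh\nabla_\xi(\zeta,0)=\Bigl([\xi,\zeta]_A,\ \int_0^1\dot\xi\cdot\zeta\Bigr)
\]
and $j([\xi,\zeta]_A)=([\xi,\zeta]_A,0)$ yield $\iota_\xi\l\d j,\zeta\r=\int_0^1\dot\xi\cdot\zeta$, taking values in the trivial central summand.

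For the second summand, I would use the cocycle $\sig(\xi_1,\xi_2)=-\int_0^1\dot\xi_1\cdot\xi_2$ together with the explicit form $\theta^\alpha(\xi)=\xi+\alpha(\a(\xi))$. Then
\[
\iota_\xi\sig(\theta^\alpha,\zeta)=\sig(\theta^\alpha(\xi),\zeta)=-\int_0^1\bigl(\dot\xi+\dot\alpha(\a(\xi))\bigr)\cdot\zeta,
\]
and adding this to the first summand cancels the $\int_0^1\dot\xi\cdot\zeta$ contribution, leaving $-\int_0^1\dot\alpha(\a(\xi))\cdot\zeta=-\iota_{\a(\xi)}\bigl(\int_0^1\dot\alpha\cdot\zeta\bigr)$, as claimed.

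I do not anticipate a serious obstacle; the computation is mechanical once one organizes it through Proposition 3.3. The only bookkeeping point worth noting is that differentiating the defining relations $\xi(t+1)=\Ad_g\xi(t)+v_\xi$ and $\alpha_{t+1}=\Ad_g\alpha_t-\theta^R$ in $t$ removes the inhomogeneous pieces, so $\dot\xi\in\Gamma(L)$ and $\dot\alpha\in\Om^1(G,L)$; by $\Ad$-invariance of $\cdot$ the integrands $\dot\xi\cdot\zeta$ and $\dot\alpha\cdot\zeta$ are $1$-periodic in $t$, so the integrals over $[0,1]$ are well defined and the manipulations above make sense.
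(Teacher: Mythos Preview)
Your proposal is correct and follows essentially the same approach as the paper's proof: both invoke the decomposition $\d^{\theta^\alpha}j=\d j+\sig(\theta^\alpha,\cdot)$ from Proposition~\ref{prop:dj}, contract with an arbitrary $\xi\in\Gamma(A)$, compute $\iota_\xi\l\d j,\zeta\r=\int_0^1\dot\xi\cdot\zeta$ via the lifted representation $\wh\nabla_\xi$, and then observe that the $\dot\xi$-term cancels against part of $\sig(\theta^\alpha(\xi),\zeta)$, leaving $-\int_0^1\iota_{\a(\xi)}\dot\alpha\cdot\zeta$. Your additional remark on periodicity of the integrands is a welcome bit of bookkeeping that the paper leaves implicit.
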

\begin{proof}
  Recall that $\l\d^{\theta^\alpha} j,\zeta\r=\l \d
  j,\zeta\r+\sig({\theta^\alpha},\zeta)$.  For $\xi\in \Gamma(A)$ we
  compute
\[ \begin{split}
  \iota_\xi\l \d j,\zeta\r&=\L_\xi j(\zeta)-j(\L_\xi\zeta)
  =\int_0^1 \dot{\xi}\cdot \zeta,\\
  \sig(\iota_\xi{\theta^\alpha},\zeta)&=
  \sig(\xi+\iota_{\a(\xi)}\alpha,\zeta) =-\int_0^1
  \iota_{\a(\xi)}\dot{\alpha}\cdot \zeta-\int_0^1 \dot{\xi}\cdot
  \zeta.\qedhere\end{split} \]
\end{proof}

Equation \eqref{eq:brylinski}  together with this Lemma shows that 
\[ \varpi^\alpha=-\int_0^1 \dot{\alpha}\cdot \theta^\alpha-\hh\sig(\theta^\alpha,\theta^\alpha).\]
It is convenient to introduce the forms $\kappa_t\in
(\Gamma(A^*)\otimes\g)^G$, 
\[ \kappa_t(\xi)=-\xi_t,\ \ \xi\in \Gamma(A).\]
\begin{lemma}\label{lem:kappa}
The forms $\kappa_t$ satisfy $F_G^{\kappa_t}(x)+x=0$, and 
\[\kappa_{t+1}=\Ad_g(\kappa_t)-\a^*\theta^R=:
g\bullet\kappa_t.\]
\end{lemma}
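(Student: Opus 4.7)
The plan is to establish both assertions by direct computation, using the characterization $\xi(t+1)=\Ad_g\xi(t)+v_\xi$ (with $v_\xi=\iota_{\a(\xi)}\theta^R$) of sections of $A$, together with the explicit bracket formula $[\xi,\zeta]_A=-[\xi,\zeta]_\g+X\zeta-Y\xi$ where $X=\a(\xi),\ Y=\a(\zeta)$.

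For the transformation law, I would simply evaluate at any $g \in G$:
\[ \kappa_{t+1}(\xi)=-\xi(t+1)=-\Ad_g\xi(t)-v_\xi=\Ad_g\kappa_t(\xi)-(\a^*\theta^R)(\xi), \]
which immediately gives $\kappa_{t+1}=\Ad_g(\kappa_t)-\a^*\theta^R$. The $G$-invariance of $\kappa_t$, implicit in the lemma's setup, is likewise clear: under $(k.\xi)(g,t)=\Ad_k\xi(\Ad_{k^{-1}}g,t)$, the definition $\kappa_t(\xi)(g)=-\xi(g,t)$ is manifestly $\Ad$-equivariant.

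For the curvature identity, the main computation is the Lie algebroid differential of $\kappa_t$, viewing $\g$ as the trivial $A$-representation. By Cartan's formula and the $A$-bracket,
\[ \d\kappa_t(\xi,\zeta)=X(-\zeta_t)-Y(-\xi_t)+[\xi,\zeta]_A(t)=-[\xi_t,\zeta_t]_\g=-[\kappa_t(\xi),\kappa_t(\zeta)]_\g, \]
so the $\g$-valued Maurer--Cartan-type curvature vanishes: $\d\kappa_t+\hh[\kappa_t,\kappa_t]_\g=0$. (The sign flip compared to \eqref{eq:curv} is accounted for by the relation $[\cdot,\cdot]_L=-[\cdot,\cdot]_\g$ on $L$, so that rewriting the curvature in terms of the $\g$-bracket changes its sign.) The equivariant correction is then $\Psi(x)=\iota_x\kappa_t=\kappa_t(x_A)=-(x_A)(t)=x$, since the generator $x_A\in\Gamma(A)$ is the constant section with value $-x$. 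Consequently $F_G^{\kappa_t}(x)=-\Psi(x)=-x$, proving $F_G^{\kappa_t}(x)+x=0$.

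There is no real obstacle here; the only point requiring care is the bookkeeping of sign conventions, stemming from the fact that the bracket on $L$ is \emph{minus} the pointwise bracket on $C^\infty(\R,\g)$, which propagates into the curvature formula when it is expressed in terms of the $\g$-bracket rather than the $L$-bracket.
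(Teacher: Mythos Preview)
Your proof is correct and follows essentially the same route as the paper: both compute $\d\kappa_t$ via the Cartan formula using the explicit bracket $[\xi,\zeta]_A=-[\xi,\zeta]_\g+X\zeta-Y\xi$, obtain the contraction with the generator from $x_A=-x$, and read off the transformation law from the defining property $\xi(t+1)=\Ad_g\xi(t)+v_\xi$. (Your value $\iota_{x_A}\kappa_t=x$ is the correct sign; the paper's displayed $-x$ appears to be a typo, but the conclusion $F_G^{\kappa_t}(x)+x=0$ is unaffected.)
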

\begin{proof}
We have
\[
\d\kappa_t(\xi,\zeta)=-\kappa_t([\xi,\zeta]_A)+\a(\xi)\kappa_t(\zeta)-\a(\zeta)\kappa_t(\xi)=-[\xi,\zeta]_\g
=-\hh[\kappa_t,\kappa_t](\xi,\zeta).\]
This shows $F^{\kappa_t}=0$. Furthermore, for $x\in \g$ we have
$\iota(x_A)\kappa_t=-x$,
by definition of $x_A$. Hence $F_G^{\kappa_t}(x)+x=0$.
The transformation property $\kappa_{t+1}=\Ad_g\kappa_t-\a^*\theta^R$
follows from the definition of $A$. 
\end{proof}

Let $Q^\alpha\in\Om^2(G)$ be the 2-form (see Section \ref{subsec:CS1})
\begin{equation}\label{eq:qalpha}
 Q^\alpha=\hh \theta^L\cdot\alpha_0+\hh \int_0^1 \alpha_t
 \cdot\dot{\alpha}_t,\end{equation}
and define $Q^\kappa\in \Gamma(\wedge^2 A^*)$ by a similar expression,
with $\alpha_t$ replaced by $\kappa_t$.

\begin{proposition}
We have $\varpi^\alpha=\a^* Q^\alpha-Q^\kappa$.
\end{proposition}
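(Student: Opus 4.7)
The strategy is direct computation: evaluate both sides on sections $\xi,\zeta\in\Gamma(A)$ with anchors $X=\a(\xi)$ and $Y=\a(\zeta)$, and verify equality.

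First, I expand $\varpi^\alpha(\xi,\zeta)$ using the formula $\varpi^\alpha=\l\d^{\theta^\alpha}j,\theta^\alpha\r-\hh\sig(\theta^\alpha,\theta^\alpha)$ from \eqref{eq:brylinski}. Lemma \ref{lem:zeta} gives $\l\d^{\theta^\alpha}j,\eta\r(\xi)=-\int_0^1\dot\alpha_t(X)\cdot\eta_t\,dt$ for $\eta\in\Gamma(L)$, and the cocycle formula $\sig(\eta_1,\eta_2)=-\int_0^1\dot\eta_{1,t}\cdot\eta_{2,t}\,dt$ applies to $\theta^\alpha(\xi)_t=\xi_t+\alpha_t(X)$. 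The cross terms of the form $\dot\alpha_t(X)\cdot(\zeta_t+\alpha_t(Y))$ cancel between the two summands, yielding
\[
\varpi^\alpha(\xi,\zeta)=\int_0^1\bigl[\dot\alpha_t(Y)\cdot(\xi_t+\alpha_t(X))+\dot\xi_t\cdot(\zeta_t+\alpha_t(Y))\bigr]\,dt.
\]

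Next I compute $\a^* Q^\alpha(\xi,\zeta)=Q^\alpha(X,Y)$ from \eqref{eq:qalpha} and $Q^\kappa(\xi,\zeta)$ from the analogous formula, using $\kappa_t(\xi)=-\xi_t$ and $\dot\kappa_t(\xi)=-\dot\xi_t$. Forming the difference $\a^* Q^\alpha-Q^\kappa$ and comparing with the expression for $\varpi^\alpha$ above, the verification reduces --- after applying integration by parts to the mixed terms $\int_0^1\dot\alpha_t(Y)\cdot\xi_t\,dt$ and their companions --- to the boundary identity
\[
\alpha_1(Y)\cdot\xi_1-\alpha_0(Y)\cdot\xi_0=\alpha_0(Y)\cdot\theta^L(X)-\theta^L(Y)\cdot\xi_0-\theta^R(X)\cdot\theta^R(Y)
\]
(and its companion under $(\xi,X)\leftrightarrow(\zeta,Y)$). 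This follows from the twisting relations $\alpha_1=\Ad_g\alpha_0-\theta^R$ and $\xi_1=\Ad_g\xi_0+\theta^R(X)$, together with $\Ad$-invariance of the pairing $\cdot$ and $\theta^L=\Ad_{g^{-1}}\theta^R$.

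The key observation for the final step is that the symmetric term $\theta^R(X)\cdot\theta^R(Y)$ --- which arises naturally from the boundary contributions --- is killed by the antisymmetrization in $(X,Y)$, while the remaining antisymmetric pieces assemble precisely into the Maurer--Cartan-type contributions $\hh\theta^L\cdot\alpha_0$ of $Q^\alpha$ and $\hh\a^*\theta^L\cdot\kappa_0$ of $Q^\kappa$ (after noting that $\alpha_0(Y)+\zeta_0=\theta^\alpha(\zeta)_0$, etc.). The main obstacle is organizational: keeping track of the many quadratic terms in $\xi_t,\zeta_t,\alpha_t(X),\alpha_t(Y)$ and their time derivatives, and carefully applying the twisting conditions at $t=0,1$ to extract the Maurer--Cartan structure out of what are \emph{a priori} just boundary evaluations of integrated expressions.
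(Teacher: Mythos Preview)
Your computation is correct, but the paper's proof is considerably shorter because it works at the level of $\g$-valued 1-forms rather than evaluating on sections. The key observation you do not exploit is the identity $\theta^\alpha=\a^*\alpha-\kappa$ (immediate from $\theta^\alpha(\xi)_t=\xi_t+\alpha_t(\a(\xi))$ and $\kappa_t(\xi)=-\xi_t$). Substituting this into $\varpi^\alpha=-\int_0^1\dot\alpha\cdot\theta^\alpha+\hh\int_0^1\dot\theta^\alpha\cdot\theta^\alpha$, the quadratic expansion produces the two ``diagonal'' terms $\hh\int_0^1\alpha\cdot\dot\alpha$ and $-\hh\int_0^1\kappa\cdot\dot\kappa$, while the cross terms combine into the total derivative $-\hh\int_0^1\frac{\partial}{\partial t}(\kappa\cdot\alpha)$. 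The boundary evaluation then uses $\kappa_1=g\bullet\kappa_0$ and $\alpha_1=g\bullet\alpha_0$ simultaneously, and $\theta^R\cdot\theta^R=0$ kills the symmetric piece automatically---no need to track it and antisymmetrize by hand. Your approach recovers the same boundary contributions, but only after integration by parts on the mixed $\dot\alpha\cdot\xi$ terms and separate bookkeeping for the four quadratic pieces; the form-level calculation packages all of that into one line. What your approach buys is that it never leaves the concrete description of sections, so one sees exactly how the twisting relations for $\alpha$ and for elements of $A$ interact; the paper's approach buys brevity and makes the structure $Q^\alpha-Q^\kappa$ visible almost immediately.
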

\begin{proof}
By definition, $\theta^\alpha=\a^*\alpha-\kappa$.
To simplify notation, we omit the pull-back $\a^*$ in the following
computation, i.e. we view $\Om(G)$ as a subspace of $\Gamma(\wedge A^*)$:
\[ \begin{split}\varpi^\alpha&=-\int_0^1 \dot{\alpha}\cdot(\alpha-\kappa)+\hh\int_0^1
(\dot{\alpha}-\dot{\kappa})\cdot (\alpha-\kappa)\\
&=\hh \int_0^1 \alpha\cdot \dot{\alpha}-\hh \int_0^1
\kappa \cdot  \dot{\kappa}
-\hh \int_0^1 \f{\p}{\p t}(\kappa\cdot\alpha)\\
&=\hh \int_0^1 \alpha\cdot \dot{\alpha}-\hh \int_0^1
\kappa \cdot  \dot{\kappa}
-\hh(\Ad_g\kappa_0-\theta^R)\cdot(\Ad_g\alpha_0-\theta^R)
+\hh \kappa_0\cdot\alpha_0\\
&=Q^\alpha-Q^\kappa.
\end{split}
.\]
\end{proof}

\begin{lemma}
For $\alpha$ as in \eqref{eq:standardalpha}, one has
\[ Q^\alpha=\Big(\f{\theta^L+\theta^R}{2}\Big)\cdot \alpha_0+\hh
\alpha_0\cdot\Ad_g\alpha_0.\]
In particular, $Q^\alpha=0$ for $\alpha_0=0$. 
\end{lemma}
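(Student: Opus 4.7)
The plan is a direct computation on the fundamental interval $[0,1]$, exploiting the fact that for a $\g$-valued $1$-form $\mu$ on $G$, the symmetry of the bilinear form together with the antisymmetry of the wedge product forces $\mu\cdot\mu=0$.

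First I would restrict attention to $t\in[0,1]$, where the explicit interpolation \eqref{eq:standardalpha} reads
\[
\alpha_t=\alpha_0+f(t)\beta,\qquad \beta:=\alpha_1-\alpha_0=\Ad_g\alpha_0-\theta^R-\alpha_0,
\]
so that $\dot\alpha_t=f'(t)\beta$. Expanding the integrand gives
\[
\alpha_t\cdot\dot\alpha_t = f'(t)\,\alpha_0\cdot\beta + f(t)f'(t)\,\beta\cdot\beta.
\]

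Next I would observe that $\beta\cdot\beta=0$: since $\beta$ is a $\g$-valued $1$-form and the pairing is symmetric, graded commutativity of the wedge product gives $\beta\cdot\beta=-\beta\cdot\beta$. Hence the second term drops out, and $\int_0^1 f'(t)\,dt=1$ yields
\[
\int_0^1 \alpha_t\cdot\dot\alpha_t = \alpha_0\cdot\beta = \alpha_0\cdot\Ad_g\alpha_0 - \alpha_0\cdot\theta^R,
\]
again using $\alpha_0\cdot\alpha_0=0$ for the same reason.

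Finally I would substitute into \eqref{eq:qalpha}, using $\alpha_0\cdot\theta^R=-\theta^R\cdot\alpha_0$:
\[
Q^\alpha=\hh\theta^L\cdot\alpha_0+\hh\alpha_0\cdot\Ad_g\alpha_0-\hh\alpha_0\cdot\theta^R
=\Big(\frac{\theta^L+\theta^R}{2}\Big)\cdot\alpha_0+\hh\alpha_0\cdot\Ad_g\alpha_0.
\]
When $\alpha_0=0$ every term on the right vanishes, so $Q^\alpha=0$. There is no real obstacle here; the only thing one must not overlook is that symmetry of the form plus antisymmetry of the wedge on $1$-forms makes the quadratic-in-$\beta$ term disappear, which is precisely what forces the answer to involve only $\alpha_0$ and its adjoint-translate.
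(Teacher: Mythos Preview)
Your argument is correct and essentially identical to the paper's: both restrict to $[0,1]$, use the linear interpolation $\alpha_t=\alpha_0+f(t)(g\bullet\alpha_0-\alpha_0)$, exploit $\mu\cdot\mu=0$ for $\g$-valued $1$-forms to kill the quadratic term, integrate $f'$ to get $\alpha_0\cdot(g\bullet\alpha_0)$, and then expand. The paper just keeps the shorthand $g\bullet\alpha_0$ a step longer before unpacking it into $\Ad_g\alpha_0-\theta^R$, whereas you expand immediately; the content is the same.
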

\begin{proof}
By assumption, 
$\alpha_t=\alpha_0+f(t)(g\bullet\alpha_0-\alpha_0)$ for $0\le t\le 1$,
where $f(0)=0$ and $f(1)=1$. Hence 
$\alpha_t\cdot\dot{\alpha}_t
=\dot{f}\alpha_0\cdot(g\bullet\alpha_0)$,  and therefore 
\[\hh \int_0^1 \alpha_t\cdot\dot{\alpha}_t=\hh
\alpha_0\cdot(g\bullet\alpha_0)
=\hh \alpha_0\cdot\Ad_g\alpha_0+\hh\theta^R\cdot\alpha_0
.\]
Adding $\hh \theta^L\cdot\alpha_0$, the formula for $Q^\alpha$ follows.
\end{proof}

For the rest of this
paper, we will write $\varpi:=-Q^\kappa\in \Gamma(\wedge^2 A^*)$,
that is
\begin{equation}\label{eq:varpiformula}
 \varpi=-\hh \int_0^1 \kappa\cdot \dot{\kappa}-\hh 
\a^*\theta^L\cdot 
\kappa_0.\end{equation}
Thus $\varpi^\alpha=\varpi$ for any choice of $\alpha$ with
$Q^\alpha=0$. 
More explicitly, for $\xi\in \Gamma(A)$ we have
\[ \begin{split}
\varpi(\xi,\cdot)&=\hh \int_0^1 (\xi\cdot
\dot{\kappa}-\dot{\xi}\cdot \kappa)
-\hh v_\xi\cdot \Ad_g\kappa_0-\hh \Ad_g\xi_0\cdot
\mathsf{a}^*\theta^R\\
&=-\int_0^1 \dot{\xi}\cdot\kappa+\hh(\xi_1\cdot\kappa_1-\xi_0\cdot\kappa_0)
-\hh v_\xi\cdot \Ad_g\kappa_0-\hh \Ad_g\xi_0\cdot
\mathsf{a}^*\theta^R\\
&=-\int_0^1\dot{\xi}\cdot\kappa-\Ad_g(\xi_0)\cdot\mathsf{a}^*\theta^R-\hh
v_\xi\cdot \mathsf{a}^*\theta^R
\end{split}\]
Taking another contraction with $\zeta\in\Gamma(A)$, 
\[ \varpi(\xi,\zeta)=\int_0^1 \dot{\xi}\cdot\zeta-\Ad_g(\xi_0)\cdot v_\zeta-\hh
v_\xi\cdot v_\zeta.\]

\subsection{The 3-form $\eta^\alpha$}
Let $\eta\in\Om^3(G)$ be the \emph{Cartan 3-form} on $G$ given as 
\[ \eta=\f{1}{12}\theta^L\cdot[\theta^L,\theta^L]\in\Om^3(G),\]
and let $\eta_G\in\Om^3_G(G)$ be its equivariant extension 
\[ \eta_G(x)=\eta-\hh (\theta^L+\theta^R)\cdot x.\]
The 2-form $\varpi=-Q^\kappa\in\Gamma(\wedge^2 A^*)$ obeys
\[ \d_G \varpi(x)=-\d_G Q^\kappa(x)=\a^*\eta_G(x)-\int_0^1 \dot\kappa_t \cdot
(F^{\kappa_t}_G(x)+x)=\a^*\eta_G(x),\]
in particular $\d\varpi=\a^*\eta$. We obtain:
\begin{theorem}\label{th:3form}
  We have $\d\varpi^\alpha=\a^*(\eta+\d Q^\alpha)$, and if $\alpha$ is
  $G$-invariant, $\d_G\varpi^\alpha=\a^*(\eta_G+\d_G Q^\alpha)$. In
  particular, taking an invariant $\alpha$ with $Q^\alpha=0$, the
  2-form $\varpi\in\Gamma(\wedge^2 A^*)^G$ defined in
  \eqref{eq:varpiformula} satisfies
\[ \d_G\varpi=\a^*\eta_G.\]
\end{theorem}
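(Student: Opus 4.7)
The theorem has three assertions; the third is the specialization of the second to $Q^\alpha=0$, and the first is the $S\g^*$-degree-zero component of the second. So my plan focuses on proving the equivariant identity
\[ \d_G\varpi^\alpha=\a^*(\eta_G+\d_G Q^\alpha). \]
From the preceding proposition we have $\varpi^\alpha=\a^*Q^\alpha-Q^\kappa$, and since $\a^*$ intertwines the equivariant differentials this reduces to the single clean statement
\[ \d_G Q^\kappa=-\a^*\eta_G. \]

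To prove this identity I would differentiate the explicit formula $Q^\kappa=\hh\a^*\theta^L\cdot\kappa_0+\hh\int_0^1\kappa_t\cdot\dot\kappa_t$ term by term. Passing $\d_G$ under the integral and using Lemma \ref{lem:kappa} to substitute $\d_G\kappa_t=\hh[\kappa_t,\kappa_t]+F^{\kappa_t}_G$ with $F^{\kappa_t}_G(x)=-x$, together with $\d_G\dot\kappa_t=[\kappa_t,\dot\kappa_t]$ (obtained by differentiating this relation in $t$), the $\Ad$-invariance of ``$\cdot$'' collapses the cubic-in-$\kappa$ terms into a total $t$-derivative, and the only surviving contribution from the integrand is proportional to $\int_0^1 \dot\kappa_t\cdot(F^{\kappa_t}_G(x)+x)$, which vanishes by Lemma \ref{lem:kappa}. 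This cancellation is the key point: it is precisely the flatness of the tautological connection $\kappa_t$ that keeps the computation clean, and it is what makes working with $\kappa$ genuinely simpler than with an arbitrary $\alpha$.

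What remains are boundary contributions, namely the derivative of $\hh\a^*\theta^L\cdot\kappa_0$ together with the $t=0,1$ endpoints produced by the implicit integration by parts. Using the transformation rule $\kappa_1=\Ad_g\kappa_0-\a^*\theta^R$ from Lemma \ref{lem:kappa} to re-express $\kappa_1$ in terms of $\kappa_0,\theta^L,\theta^R$, and invoking the Maurer-Cartan equation $\d\theta^L+\hh[\theta^L,\theta^L]=0$, I would then simplify using $\Ad$-invariance of the bilinear form. The $\kappa_0$-dependent pieces should cancel identically, leaving the Cartan 3-form $-\a^*\eta=-\f{1}{12}\a^*\theta^L\cdot[\theta^L,\theta^L]$ in form degree three, and the equivariant correction $\hh\a^*(\theta^L+\theta^R)\cdot x$ in form degree one — together, exactly $-\a^*\eta_G$.

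The main obstacle I anticipate is the bookkeeping in this boundary computation: tracking signs coming from symmetric versus graded-antisymmetric pairings of $\g$-valued forms, organizing the $\Ad_g$-terms produced by the transformation rule for $\kappa_1$, and verifying that every $\kappa_0$-dependent contribution cancels without leaving a residue. The geometric content — that the entire bulk integrand is killed by the flatness identity $F^{\kappa_t}_G+x=0$ — is transparent once Lemma \ref{lem:kappa} is in hand; the delicate part is extracting the precise algebraic shape of the surface terms so that they assemble into the equivariant Cartan 3-form and nothing else.
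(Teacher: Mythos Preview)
Your reduction to $\d_G Q^\kappa=-\a^*\eta_G$ and your identification of the key mechanism --- that the bulk integrand collapses because $F^{\kappa_t}_G(x)+x=0$ --- is exactly right and matches the paper. The paper, however, sidesteps the boundary bookkeeping you flag as the ``main obstacle'' by invoking the equivariant Chern-Simons identity from the appendix (equation \eqref{eq:CSform2}): for any family $\beta_t$ with $\beta_{t+1}=\Phi\bullet\beta_t$ one has $\int_0^1\dot\beta_t\cdot(F^{\beta_t}_G(x)+x)=\Phi^*\eta_G+\d_G Q^\beta$. With $\beta=\kappa$ and $\Phi=g$ this gives $\d_G Q^\kappa=-\a^*\eta_G+\int_0^1\dot\kappa_t\cdot(F^{\kappa_t}_G(x)+x)=-\a^*\eta_G$ in one line. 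Your direct computation is essentially the unpacked proof of that very identity in the flat case; what the Chern-Simons packaging buys is that the delicate boundary algebra is carried out once, in the gauge-transformation law $\on{CS}_G(\Phi\bullet\beta)=\on{CS}_G(\beta)+\Phi^*\eta_G-\hh\d_G(\beta\cdot\Phi^*\theta^L)$, so the Cartan 3-form appears structurally rather than having to be reassembled from the surface terms by hand.
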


\section{Fusion}\label{subsec:mult}
In this Section, we will study multiplicative properties of the Atiyah
algebroid over $G$, and of the forms $\varpi$. We begin by introducing
a (partial) multiplication on $A$, using concatenation of paths.
Let $\xi'\in
A_{g'},\ \xi''\in A_{g''}$, with
\[ \xi'_1=\xi''_0.\]
The \emph{concatenation} $\xi''*\xi'$ is defined as follows: 

\[ (\xi''*\xi')_t=\begin{cases} \xi'_{2t}&\text{ if $0\le t\le
    \hh$}\\
\xi''_{2t-1}&\text{ if $\hh\le t\le
    1$}\\
\end{cases}\]
extended to all $t$ by the property, 
\[ (\xi''*\xi')_{t+1}
=\Ad_{g'' g'} (\xi''*\xi')_t+(\Ad_{g''}v_{\xi'}+v_{\xi''}).\]
This is consistent since, putting $t=0$,   
\[ \xi''_1=\Ad_{g''}\xi''_0+v_{\xi''}=
\Ad_{g'' g'}\xi'_0+(\Ad_{g''}v_{\xi'}+v_{\xi''}).\]
Then $\xi''*\xi'\in A_{g'' g'}$ provided the concatenation is
\emph{smooth}.  The concatenation is smooth if, for example,
$\xi'',\xi'$ are constant near $t=0$.
Let 
\[ A^{[2]}\subset A\times A\] 
be the sub-bundle of \emph{composable paths}, with fiber at $(g'',g')$ the set of pairs
$(\xi'',\xi')\in A_{g''}\times A_{g'}$ such that $\xi'_1=\xi''_0$ and
such that $\xi''*\xi'$ is smooth. 
One easily checks that $A^{[2]}$ is a Lie subalgebroid of $A\times A$,
i.e. that the bracket on $\Gamma(A\times A)$ restricts to
$\Gamma(A^{[2]})$. The kernel of its anchor map $\a^{[2]}\colon
A^{[2]}\to TG^2$ is denoted $L^{[2]}$; it is a sub Lie algebra bundle
of $L\times L$.

Concatenation gives a bundle map $\on{mult}_A\colon A^{[2]}\to A$, 
covering the group multiplication $\on{mult}_G\colon G\times G\to G$. 
That is, we have a commutative diagram, 
\[ \begin{CD} 
A^{[2]} @>>{\on{mult}_A}> A\\
@VVV @VVV\\
G\times G @>>{\on{mult}_G}> G
\end{CD}\]
We have three transitive Lie algebroids over $G^2$, with inclusion maps
\begin{equation}\label{eq:3bun} 
A^2 \leftarrow A^{[2]} \rightarrow \on{mult}_G^!A,
\end{equation}
Here the left map is given by the definition of $A^{[2]}$, while the
right map is concatenation. The two maps correspond to reductions of
the structure Lie algebroids to $L^{[2]}$, 
\footnote{By analogy, one
  may think of $L^{[2]}$ as `figure eight' loops. The two maps
  correspond to viewing the figure eight either as a single loop or as
  a pair of two loops.}
\[ L^2\leftarrow L^{[2]}\rightarrow \on{mult}_G^* L.\] 
We are interested in compatible principal connections on the 
three transitive Lie algebroids \eqref{eq:3bun} over $G\times G$.
Write the elements of $G^2$ as $(g'',g')$, and use the similar
notation to indicate projections to the two factors.  Let
$\alpha',\alpha''\colon \R \to \Om^1(G^2,\g)^G$ be smooth families of
1-forms with
\[ \alpha'_{t+1}=g'\bullet\alpha'_t,\ \ \alpha''_{t+1}=g''\bullet \alpha''_t.\] 
Assume both of these are constant near $t=0$ (hence near any integer
$t=n$), and that $\alpha'_1=\alpha''_0$. The concatentation (cf. Prop.
\ref{prop:q}) $\alpha''*\alpha'\colon \R\to \Om^1(G^2,\g)^G$ defines a
connection $\theta^{\alpha''*\alpha'}$ on $m_G^!A$, while the pair
$\alpha'',\alpha'$ defines a connection $\theta^{\alpha'',\alpha'}$ on
$A\times A$. These two connections are compatible, in the sense that
they restrict to the same connection on $A^{[2]}$.
For the corresponding forms $\varpi^{\alpha'}$ etc. this implies
\[ \varpi^{\alpha''*\alpha'}\Big|_{A^{[2]}}=
\varpi^{\alpha'',\alpha'}\Big|_{A^{[2]}}     ,\]
and hence the 3-forms satisfy 
$ \on{mult}_G^*\eta^{\alpha''*\alpha'}=
\eta^{\alpha'',\alpha'}$.

Let $\varpi\in \Gamma(\wedge^2 A^*)$ be the 2-form defined 
in \eqref{eq:varpiformula}. Then 
\[ \begin{split}
\varpi^{\alpha''*\alpha'}&=
\on{mult}_A^!\varpi+Q^{\alpha''*\alpha'},\  \\
 \varpi^{\alpha',\alpha''}&=\pr_1^!\varpi+\pr_2^!\varpi+Q^{\alpha'}+Q^{\alpha''}.
\end{split}\]
Using the property \eqref{Qconcat} of $Q^\alpha$ under concatenation,
we obtain:
\begin{proposition}\label{prop:mult}
  The 2-form $\varpi$ satisfies, over $A^{[2]}\subset A\times A$, 
\[\on{mult}_A^!\varpi=\pr_1^!\varpi+\pr_2^!\varpi-\lambda\]
Here $\lambda\in \Om^2(G\times G)$ is the 2-form,
$\lambda=\hh \pr_1^*\theta^L\cdot \pr_2^*\theta^R$.
\end{proposition}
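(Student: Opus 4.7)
The plan is to read off the multiplicativity of $\varpi$ from the multiplicativity defect of the form $Q^\alpha$ under concatenation, by comparing the two natural expressions for the same 2-form on $A^{[2]}$ that are exhibited in the paragraph immediately preceding the statement.

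First, I fix a good choice of compatible connection data. Pick smooth $G$-invariant families $\alpha'_t, \alpha''_t \in \Omega^1(G\times G,\g)^G$, both constant in $t$ near all integers, with $\alpha'_{t+1} = g' \bullet \alpha'_t$ and $\alpha''_{t+1} = g'' \bullet \alpha''_t$, and arranged so that $\alpha'_1 = \alpha''_0$. Then the concatenation $\alpha'' * \alpha'$ is smooth and defines a connection $\theta^{\alpha'' * \alpha'}$ on $\on{mult}_G^! A$; the pair $(\alpha'', \alpha')$ defines a connection $\theta^{\alpha'', \alpha'}$ on $A \times A$; and, as noted in the excerpt, these restrict to the same connection on $A^{[2]}$, so that
\[
 \varpi^{\alpha'' * \alpha'}\big|_{A^{[2]}} = \varpi^{\alpha'', \alpha'}\big|_{A^{[2]}}.
\]

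Next, I substitute the two decompositions stated just before Proposition \ref{prop:mult},
\[
 \varpi^{\alpha'' * \alpha'} = \on{mult}_A^! \varpi + Q^{\alpha'' * \alpha'}, \qquad \varpi^{\alpha'', \alpha'} = \pr_1^! \varpi + \pr_2^! \varpi + Q^{\alpha'} + Q^{\alpha''},
\]
into this equality. This rearranges to
\[
 \on{mult}_A^! \varpi - \pr_1^! \varpi - \pr_2^! \varpi \;=\; Q^{\alpha'} + Q^{\alpha''} - Q^{\alpha'' * \alpha'}
\]
as forms on $A^{[2]}$. The right-hand side is basic, i.e.\ pulled back from $G\times G$, and in particular does not depend on the auxiliary choice of $\alpha', \alpha''$.

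Finally, I invoke the concatenation formula \eqref{Qconcat} for $Q^\alpha$ already established in the excerpt, which gives
\[
 Q^{\alpha'' * \alpha'} = Q^{\alpha'} + Q^{\alpha''} + \lambda, \qquad \lambda = \tfrac{1}{2} \pr_1^* \theta^L \cdot \pr_2^* \theta^R.
\]
Substituting yields $\on{mult}_A^! \varpi = \pr_1^! \varpi + \pr_2^! \varpi - \lambda$ on $A^{[2]}$, as claimed. The main technical content — and the place where I would expect the only real work to sit — is the verification of the concatenation formula for $Q^\alpha$: starting from $Q^\alpha = \hh \theta^L \cdot \alpha_0 + \hh \int_0^1 \alpha_t \cdot \dot\alpha_t$, splitting the integral for $\alpha'' * \alpha'$ at $t = \hh$ and rescaling produces $Q^{\alpha'}$ and $Q^{\alpha''}$ together with boundary contributions at the splicing point; these, combined with the Maurer--Cartan identity $\on{mult}_G^* \theta^L = \pr_2^*\theta^L + \Ad_{(g')^{-1}} \pr_1^* \theta^L$ and the matching condition $\alpha'_1 = \alpha''_0$ together with the twistings of $\alpha', \alpha''$, collect precisely into the cross-term $\tfrac{1}{2}\pr_1^*\theta^L \cdot \pr_2^*\theta^R$.
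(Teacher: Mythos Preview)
Your argument is correct and follows exactly the paper's approach: the proof in the paper is precisely the paragraph preceding the proposition, where the equality of the two restricted forms on $A^{[2]}$ is combined with the decompositions $\varpi^{\alpha''*\alpha'}=\on{mult}_A^!\varpi+Q^{\alpha''*\alpha'}$ and $\varpi^{\alpha'',\alpha'}=\pr_1^!\varpi+\pr_2^!\varpi+Q^{\alpha'}+Q^{\alpha''}$, and then the concatenation identity \eqref{Qconcat} (proved in the appendix) is invoked. Your closing remark correctly identifies that the only real computation lies in verifying \eqref{Qconcat}.
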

This `lifts' the property of the Cartan 3-form, 
$\on{mult}_G^*\eta=\pr_1^*\eta+\pr_2^*\eta-\d \lambda$.

\section{Pull-backs}
\subsection{The lifting problem for $\Phi^!A$}
Given a $G$-equivariant map $\Phi\colon M\to G$, consider the pull-back algebroid
$A_M=\Phi^!A\to M$.  Sections of $A_M$ are pairs $(X,\xi)$, where
$X\in\mf{X}(M)$ and $\xi\in C^\infty(M\times \R,\g)$ such that for all
$t$,
\[ \xi_{t+1}=\Ad_\Phi \xi_t +\iota_X\Phi^*\theta^R.\]
The bracket between two such sections reads, 
\[ [(X,\xi),(Y,\zeta)]_{A_M}=([X,Y],\ -[\xi,\zeta]_\g
+X\zeta-Y\xi),\]
and the anchor map is $\a_M(X,\xi)=X$. The sections $x_{A_M}=\Phi^!
x_A\in \Gamma(A_M)$ are generators for the $G$-action on $A_M$. 

Suppose $\alpha_t\in\Om^1(G,\g)^G$ is a family of 1-forms as in
\eqref{eq:standardalpha}, with $Q^\alpha=0$, thus
$\varpi^\alpha=\varpi$ and $\eta^\alpha_G=\eta_G$.  
Let $\varpi_M=\Phi^!\varpi\in \Gamma(\wedge^2 A_M^*)$. 
%
%
Suppose 
\[ \Phi^*\eta_G=-\d_G\om.\] 
for an invariant 2-form $\om$. As shown in Section \ref{subsec:equivariant}, this
gives an equivariant solution of the lifting problem for $A_M$, 
relative to the central extension $\wh{L}_M=\Phi^*\wh{L}\to L_M=\Phi^*L$. 
Since we are assuming $\om_G=\om$, this  solution will have the additional
property that $j_{A_M}(x_{A_M})$ are generators for the action on
$\hat{A}_M$.  Since $\d_G\varpi_M=a_M^*\Phi^*\eta_G$, the sum
\[ \mathsf{a}_M^*\om+\varpi_M\in \Gamma(\wedge^2 A_M^*) \]
is equivariantly closed. Let us compute its kernel. For the following
theorem, we assume that the inner product on $\g$ is non-degenerate.

\begin{theorem}
Suppose $\Phi\colon M\to G$ is a $G$-equivariant map, and
$\om\in\Om^2(M)$ is an invariant 2-form 
such that $\d_G\om=-\Phi^*\eta_G$.

At any point $m\in M$, the kernel of $\mathsf{a}_M^*\om+\varpi_M\in \Gamma(\wedge^2
A_M^*)$ admits a direct sum decomposition,
\begin{equation}\label{eq:kernel}  
\ker(\mathsf{a}_M^*\om+\varpi_M)=\g\oplus (\ker(\om)\cap\ker(\d\Phi)).
\end{equation}
Here elements $v\in T_mM\cap \ker(\d_m\Phi)\subset T_mM$ are embedded
in $\ker(\mathsf{a}_M^*\om+\varpi_M)\subset A_M\subset TM\oplus A$ as elements of
the form $(v,0)$, while $\g$ is embedded diagonally as generators for
the action, $x\mapsto (x_M,{x_A})$.  
\end{theorem}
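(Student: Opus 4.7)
The plan is to compute the kernel of the bilinear form $\a_M^*\om+\varpi_M$ on the fiber $(A_M)_m$ directly. Writing $g=\Phi(m)$ and substituting $v_\xi=\d\Phi(X)$, $v_\zeta=\d\Phi(Y)$ into the boxed formula for $\varpi$ at the end of Section~\ref{subsec:kappat}, one gets

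\[ (\a_M^*\om+\varpi_M)((X,\xi),(Y,\zeta))=\om(X,Y)+\int_0^1\dot\xi\cdot\zeta-\Ad_g(\xi_0)\cdot\d\Phi(Y)-\hh\,\d\Phi(X)\cdot\d\Phi(Y). \]

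First I will verify that both summands on the right-hand side of \eqref{eq:kernel} lie in the kernel. For $v\in\ker(\om)\cap\ker(\d_m\Phi)$, the pair $(v,0)\in(A_M)_m$ makes the three $\varpi_M$-terms vanish (since $\xi_0=0$, $\dot\xi=0$, and $\d\Phi(v)=0$), leaving only the trivial condition $\om(v,Y)=0$. For the diagonal embedding $x\mapsto(x_M|_m,-x)$, which lies in $(A_M)_m$ by $G$-equivariance ($\d\Phi(x_M|_m)=\Ad_g x-x$), the pairing collapses, after using $\Ad$-invariance of the inner product and the identity $\iota_Y\Phi^*\theta^L=\Ad_{g^{-1}}\iota_Y\Phi^*\theta^R$, to

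\[ \om(x_M|_m,Y)+\hh\,x\cdot\iota_Y\Phi^*(\theta^L+\theta^R). \]

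This vanishes by reading off the form-degree-$1$ component of $\d_G\om=-\Phi^*\eta_G$; with $\eta_G(x)=\eta-\hh(\theta^L+\theta^R)\cdot x$ this gives precisely $\iota_{x_M}\om=-\hh\,\Phi^*(\theta^L+\theta^R)\cdot x$.

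To show every kernel element decomposes in this way, take $(X,\xi)\in\ker(\a_M^*\om+\varpi_M)$ and first test against sections of the form $(0,\zeta)$ with $\zeta\in L_g$. The anchor of $(0,\zeta)$ vanishes, killing the $\om$-term, and $v_\zeta=0$ kills the last two $\varpi_M$-terms, so the kernel condition reduces to

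\[ \int_0^1\dot\xi\cdot\zeta\,\d t=0\qquad\text{for all }\zeta\in L_g. \]

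Differentiating the twist relation $\xi(t+1)=\Ad_g\xi(t)+\d\Phi(X)$ shows $\dot\xi\in L_g$. Testing against $\zeta$ compactly supported in $(0,1)$ (which extends to an element of $L_g$ trivially, no compatibility at integers being required) and using non-degeneracy of the inner product on $\g$ forces $\dot\xi\equiv 0$. Hence $\xi\equiv -x$ is a constant path for some $x\in\g$; the twist condition then yields $\d\Phi(X)=\Ad_g x-x=\d\Phi(x_M|_m)$, so $v:=X-x_M|_m\in\ker(\d_m\Phi)$. Subtracting the already-verified kernel element $(x_M|_m,-x)$ gives $(v,0)\in\ker(\a_M^*\om+\varpi_M)$, and the first-paragraph computation with $\xi=0$ reduces this to $\om(v,Y)=0$ for every $Y$, i.e. $v\in\ker(\om)$.

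Directness of the sum is immediate: $(x_M|_m,-x)=(v,0)$ forces $-x=0$, hence $x=0$ and $v=x_M|_m=0$. I expect the most delicate step to be the non-degeneracy argument on twisted loops, which relies essentially on the hypothesis that $\cdot$ on $\g$ is non-degenerate and on a standard bump-function density argument; everything else is algebraic manipulation combined with extracting the degree-$1$ component of the equivariant Cartan identity $\d_G\om=-\Phi^*\eta_G$.
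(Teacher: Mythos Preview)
Your argument is correct and follows essentially the same strategy as the paper's proof: both test a putative kernel element against $\zeta\in L_g$ to force $\dot\xi=0$, then subtract the generator $(x_M,-x)$ to reduce to $(v,0)$ with $v\in\ker(\om)\cap\ker(\d\Phi)$. The one cosmetic difference is that the paper verifies that the generators $x_{A_M}=(x_M,x_A)$ lie in the kernel by the one-line abstract observation that $\d_G(\a_M^*\om+\varpi_M)=0$ forces $\iota_{x_{A_M}}(\a_M^*\om+\varpi_M)=0$, whereas you unpack this into the explicit moment-map identity $\iota_{x_M}\om=-\hh\Phi^*(\theta^L+\theta^R)\cdot x$; these are the same computation viewed from two sides.
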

\begin{proof}
By definition, the fiber of $\Phi^!A=A_M$ at $m\in M$ is the subspace 
of $T_mM\oplus A_{\Phi(m)}$, consisting of pairs $(v,\xi)$ such that 
$(d_m\Phi)(v)=\a(\xi)$. 

The property $\d_G(\mathsf{a}_M^*\om+\varpi_M)=0$ means in particular that elements
of the form $(x_M,\,x_A)$ are in the kernel of $\om+\varpi_M$.  On
the other hand, elements of the form $(v,0)$ with $v\in \ker(d_m\Phi)$
are contained in $A_M$, and they are in the kernel of
$\om+\varpi_M$ if and only if $v\in \ker(\om)$.
This proves the inclusion $\supseteq$ in \eqref{eq:kernel}. 

For the opposite inclusion, consider a general element $(w,\xi)\in
A_M\subset TM\oplus A$ in the kernel of $\mathsf{a}_M^*\om+\varpi_M$ at $m\in M$. 
We have $\iota_{(w,\xi)}\varpi_M=\Phi^!\iota_\xi\varpi$, where
$\iota_\xi\varpi$ is given by the calculation following
\eqref{eq:varpiformula}. We thus obtain the condition
\[ \mathsf{a}_M^*(\iota_w\om)
-\int_0^1\dot{\xi}\cdot\kappa_M-\Ad_g(\xi_0)\cdot\mathsf{a}_M^*\theta^R-\hh
v_\xi\cdot \mathsf{a}_M^*\theta^R=0,\]
where $\kappa_M=\Phi^!\kappa$. Taking a contraction with $\zeta\in
\ker(\mathsf{a}_M)\cong L_{\Phi(m)}$, we obtain 
\[ \int_0^1\dot{\xi}\cdot\zeta=0.\]
Since this is true for all $\zeta\in  L_{\Phi(m)}$, the non-degeneracy
of the inner product implies $\dot{\xi}=0$. Thus $\xi$ is a
constant path. Letting $x=-\xi\in\g$, it follows that $(v,0)$
with $v=w-x_M$ lies in the kernel. As seen above, this means that
$v\in \ker(\om)\cap \ker(\d\Phi)$.
\end{proof}

The conditions, $\d_G\om=-\Phi^*\eta_G$ and $\ker(\om)\cap
\ker(\d\Phi)=0$ are exactly the defining conditions for a
\emph{q-Hamiltonian $G$-space} \cite{al:mom}. \footnote{In
  \cite{al:mom}, the second condition was stated in the form
  $\ker(\om)=\{\xi_M|\, \Ad_{\Phi(m)}\xi=-\xi\}$. The equivalence to
  $\ker(\om)\cap \ker(\d\Phi)=0$ was observed independently by
  Bursztyn-Crainic \cite{bur:di} and Xu \cite{xu:mom}.} That is, for a q-Hamiltonian $G$-space the
kernel of $\mathsf{a}_M^*\om+\varpi_M\in \Gamma(\wedge^2 A_M^*)$ is
the action Lie algebroid for the $G$-action, embedded as the Lie
subalgebroid of $A_M$ spanned by the generators of the $G$-action
$x_{A_M}$.

\subsection{The subalgebroid $A'$ and its pull-back $\Phi^!A'$}
Let $A'\subset A$ be the $G$-invariant subalgebroid, consisting of
$\xi\in A$ with $\xi_0=0$. 
Then $A'$ is again a transitive Lie algebroid, and $A=\g\ltimes A'$,
where $\g$ is embedded by the generators of the $G$-action.
The Lie algebroid $A'$ may be viewed as the Atiyah algebroid of the principal
$L_eG$-bundle $P_eG\to G$, where the subscript indicates paths based
at the group unit $e$. In turn, $P_eG$ may be identified with the
space $\Om^1(S^1,\g)$ of connections on $S^1=\R/\Z$, where the
identification is given by the map $\gamma\mapsto
\gamma^{-1}\d\gamma$. (Conversely, $\gamma$ is recovered by parallel
transport.)  There is a natural projection $q\colon A\to A',\ 
\xi\mapsto\xi-\xi(0)$, with $\a(q(\xi))=\a(\xi)+\xi(0)_G$. Of course,
$q$ does not preserve brackets.

Suppose now that $\Phi\colon M\to G$ is a $G$-equivariant map, and let
$A_M'=\Phi^!A'$.  The projection $q$ induces a projection map
$q_M\colon A_M\to A_M'$, given on sections by
\[ q_M(\xi,X)=(\xi-\xi(0),\ X+\xi(0)_M).\]
Its kernel is the trivial bundle $\g_M=M\times \g\subset A_M$, embedded by
the  map $x\mapsto (-x,x_M)$ generating the $G$-action. Even though
$q_M$ does not preserve brackets, we have: 
\begin{corollary}
If $E\subset A_M$ is a $G$-invariant Lie subalgebroid, transverse to
$\g_M$, then $q_M(E)\subset A'_M$ is a Lie subalgebroid of $A'_M$.  
\end{corollary}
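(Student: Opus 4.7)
I interpret ``$E$ transverse to $\g_M$'' as $E \cap \g_M = 0$ fiber-wise, since the alternative $E + \g_M = A_M$ would trivially give $q_M(E) = A'_M$. Under this reading, $q_M|_E \colon E \to A'_M$ is fiber-wise injective, so $q_M(E)$ is a smooth vector subbundle of $A'_M$ with $q_M|_E$ a vector bundle isomorphism onto it; in particular every $\eta' \in \Gamma(q_M(E))$ lifts uniquely to a section $\eta \in \Gamma(E)$. The anchor on $q_M(E)$ is just the restriction of the $A'_M$-anchor, so only bracket closure requires argument.

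For that, I use the splitting $A_M = \g_M^\# \oplus A'_M$ coming from $A = \g \ltimes A'$, where $\g_M^\#$ denotes the image of the generator embedding $y \mapsto y^\# = y_{A_M}$ and $q_M$ is the projection to the second summand. Write lifts as $\eta_i = y_i^\# + \eta_i'$ with $y_i \in C^\infty(M,\g)$, and expand
\[ [\eta_1,\eta_2]_{A_M} = [y_1^\#, y_2^\#]_{A_M} + [y_1^\#, \eta_2']_{A_M} + [\eta_1', y_2^\#]_{A_M} + [\eta_1', \eta_2']_{A'_M}. \]
Applying $q_M$ (which fixes sections of $A'_M$) and using $[\eta_1,\eta_2]_{A_M} \in \Gamma(E)$ yields
\[ [\eta_1',\eta_2']_{A'_M} = q_M([\eta_1,\eta_2]_{A_M}) - q_M\bigl([y_1^\#, y_2^\#]_{A_M}\bigr) - q_M\bigl([y_1^\#, \eta_2']_{A_M}\bigr) - q_M\bigl([\eta_1', y_2^\#]_{A_M}\bigr), \]
with the leading term already in $\Gamma(q_M(E))$.

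To handle the remaining three terms, the plan rests on two facts. First, a Leibniz computation using that $\# \colon \g \to \Gamma(A)$ is a Lie algebra homomorphism on constants shows that $\g_M^\# \subset A_M$ is itself a Lie subalgebroid; so $[y_1^\#, y_2^\#]_{A_M} \in \Gamma(\g_M^\#) = \ker q_M$ and the second term vanishes. Second, rewriting $\eta_i' = \eta_i - y_i^\#$ reduces the cross terms to $q_M([y^\#,\eta]_{A_M})$ with $\eta \in \Gamma(E)$; expanding $y = \sum_k f_k x_k$ with $x_k \in \g$ constant, the Leibniz rule gives
\[ [y^\#, \eta]_{A_M} = \sum_k f_k\, [x_k^\#, \eta]_{A_M} - \sum_k \mathsf{a}_M(\eta)(f_k)\, x_k^\#, \]
whose first sum is in $\Gamma(E)$ by $G$-invariance of $E$ (each $x_k^\#$ is a genuine infinitesimal generator) and whose second sum is in $\Gamma(\g_M^\#) = \ker q_M$. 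Hence both cross-term projections lie in $\Gamma(q_M(E))$, and the displayed identity shows $[\eta_1',\eta_2']_{A'_M} \in \Gamma(q_M(E))$.

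I expect the main obstacle to be precisely this Leibniz bridge: $G$-invariance of $E$ provides closure only under bracketing with constant generators $x_k^\#$, so one must carefully trade a varying $y \in C^\infty(M,\g)$ for $f_k x_k$ and verify that the extra derivative term falls in $\ker q_M$. The rest is bookkeeping in the decomposition $A_M = \g_M^\# \oplus A'_M$ that the paper has already prepared.
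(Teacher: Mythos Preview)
Your argument is correct, but the paper takes a slicker route. Rather than expanding the bracket term by term in the decomposition $A_M=\g_M\oplus A'_M$, the paper observes the single identity
\[
q_M(E)=q_M(E+\g_M)=(E+\g_M)\cap A'_M,
\]
valid because $\g_M=\ker q_M$ is contained in $E+\g_M$. It then notes that both $E+\g_M$ and $A'_M$ are Lie subalgebroids of $A_M$, so their intersection is one as well; closure under brackets drops out for free. Your proof and the paper's rest on exactly the same ingredients --- that $\g_M$ is a subalgebroid and that $G$-invariance forces $[x^\#,\Gamma(E)]\subset\Gamma(E)$ for constant $x\in\g$ --- but the paper packages them as ``$\Gamma(E+\g_M)$ is bracket-closed'' and avoids the four-term expansion entirely. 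What you gain is explicitness: your Leibniz computation actually \emph{proves} that $\Gamma(E+\g_M)$ is closed under brackets, a fact the paper simply asserts. What the paper gains is brevity and a cleaner conceptual picture (subalgebroid as an intersection of two subalgebroids).
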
 
\begin{proof}
  The transversality implies that $q_M(E)$ is a sub-bundle of $A_M'$,
  of the same rank as $E$. Letting $\g_M=M\times\g\subset A_M$ be the
  embedding given by the generators of the $\g$-action, we have
\[ q_M(E)=q_M(E+\g_M)=(E+\g_M)\cap A_M'.\]
But the sections of $E+\g_M$ are closed under $[\cdot,\cdot]_A$, as
are the sections of $A_M'$.
\end{proof}

\begin{example}
In this example, we assume that $G$ is compact and that the inner
product $\cdot$ on $\g$ is positive definite.  
Let $\Phi\colon \Co\subset G$ be the inclusion of a conjugacy
class. Then $A_\Co=\Phi^!A$ is a sum 
\[ A_\Co= L + \g_\Co.\]
The intersection $L\cap \g_\Co$ is the sub-bundle of $\g_\Co$, spanned
by $(-x,x_\Co|_g)$ with $x\in \ker(\Ad_g-1)$. Let the fibers $L_g$ carry the inner product defined by the integration pairing. Then, after
an appropriate Hilbert space completion (for instance, using the
Sobolev space $W^{1,2}$), we obtain
\[ L_g^\C=L_g^+\oplus \ker(\Ad_g-1)^\C \oplus L_g^-\] 
where $L_g^\pm$ are the direct sum of the eigenspaces for the
positive/negative part of the spectrum of $\f{1}{\sqrt{-1}}\f{\p}{\p
  t}$, and $\ker(\Ad_g-1)^\C \cong L_g^0$ is embedded as the kernel. 
Consequently, 
\[ A_\Co^\C=\Phi^!A= L^+\oplus \g_\Co\oplus L^-.\]
Since $L^\pm$ are Lie algebra sub-bundles of $L$, their integrability is
automatic, and hence 
\[ (A'_\Co)^\C=q(L^+)\oplus q(L^-)\]
is an integrable polarization of $A'_\Co$. Letting $\O$ be the
coadjoint $LG$-group orbit corresponding to $\Co=\O/L_0G$, the bundle 
$\A'_\Co$ is interpreted as $T\O/L_0G$, and its polarization is the
standard K\"ahler structure. 

\end{example}

\section{Higher analogues of the form $\varpi$}

We had remarked above that the Cartan form $\eta$ may be viewed as a
Chern-Simons form, and similarly $\eta_G$ as an equivariant
Chern-Simons form. For any invariant polynomial $p\in (S^m\g^*)^G$, we
may define `higher analogues' $\eta^p,\ \eta^p_G$ of the Cartan form
using the theory of Bott forms. We will not assume the existence of an
invariant inner product on $\g$.
\subsection{Bott forms}\label{subsec:bott1}
Let $N$ be a manifold.  Suppose $\beta\in\Om^1(N,\g)$, and that $p\in
(S^m\g^*)^G$ is an invariant polynomial of degree $m$. Then $p(F^\beta)$ is closed, as
an application of the Bianchi identity $\d F^\beta+[\beta,F^\beta]=0$.
\footnote{For any polynomial $p\in S\g^*$, we define its derivative
  $p'\in S\g^*\otimes \g^*$ by $\l p'(v),w\r=\f{\p}{\p t}\Big|_{t=0}
  p(v+tw)$.  If $p$ is $G$-invariant, then $[x,y]\cdot p'(y)=0$ for
  all $x,y\in\g$.  Thus $\d p(F^\beta)=\d F^\beta\cdot
  p'(F^\beta)= - [\beta,F^\beta]\cdot p'(F^\beta)=0$. }
Given $\beta_0,\ldots,\beta_k\in\Om^1(N,\g)$ we
define \emph{Bott forms} $\Upsilon^p(\beta_0,\ldots,\beta_k)\in \Om^{2m-k}(N)$
\[ \Upsilon^p(\beta_0,\ldots,\beta_k)=(-1)^{[\f{k+1}{2}]}\int_{\Delta^k} p(F^\beta).\]
Here $\Delta^k=\{s\in \R^{k+1}|\ s_i\ge 0,\ \ \sum_{i=0}^k s_i=1\}$
is the standard $k$-simplex, and $\beta=\sum_{i=0}^k s_i \beta_i$,
viewed as a form $\beta\in \Om^1(N\times \Delta^k,\g)$. For a
detailed discussion of Bott forms, see \cite[Chapter 4]{va:sym}.
The Bott forms satisfy
\[ \begin{split}
\d \Upsilon^p(\beta_0,\ldots,\beta_k)&=\sum_{i=0}^k (-1)^i 
\Upsilon^p(\beta_0,\ldots,\wh{\beta}_i,\ldots \beta_k),
\\
\Upsilon^p(\Phi\bullet\beta_0,\ldots,\Phi\bullet\beta_k)&=
\Upsilon^p(\beta_0,\ldots,\beta_k),\ \ \Phi\in C^\infty(N,G).
\end{split}\]
The first identity follows from Stokes' theorem \cite[Theorem
4.1.6]{va:sym}, while the second identity comes from the gauge 
equivariance of the curvature, $F^{\Phi\bullet\beta}=\Ad_\Phi(F^\beta)$. 

Consider the special case $N=G$. For any $p\in (S^m\g^*)^G$ we define 
\[ \eta^p=\Upsilon^p(0,\theta^L)\in\Om^{2m-1}(G).\]
Then $\d\eta^p=\Upsilon^p(\theta^L)-\Upsilon^p(0)=0$, using that
$F^{\beta}=0$ for both $\beta=0,\theta^L$.
For $G$ compact, the classes $[\eta^p]$ are known to generate the
cohomology ring $H^*(G,\R)$.

\subsection{Equivariant Bott forms}
With small modifications, the definition of  Bott forms carries
over to equivariant 1-forms $\beta\in \Om^1(N,\g)^G$ for a given
$G$-action on $N$, and for the adjoint action of $G$ on $\g$. 
For any such form, and an invariant polynomial $p$,
the equivariant Bianchi identity $\d_G F_G^\beta+[\beta,F_G^\beta(x)+x]=0$
implies that $p(F_G^\beta(x)+x)$ is equivariantly closed.
Given $\beta_0,\ldots,\beta_k\in \Om^1(N,\g)^G$ we define
equivariant Bott forms $\Upsilon^p_G(\beta_0,\ldots,\beta_k)\in
\Om_G(N)$ by
\[  \Upsilon^p_G(\beta_0,\ldots,\beta_k)(x)=(-1)^{[\f{k+1}{2}]}\int_{\Delta^k} p(F_G^\beta(x)+x),\]
with $\beta=\sum_{i=0}^k s_i \beta_i$ as above.
Then 
\[ \begin{split}
\d_G \Upsilon^p_G(\beta_0,\ldots,\beta_k)&=\sum_{i=0}^k (-1)^i 
\Upsilon^p_G(\beta_0,\ldots,\wh{\beta}_i,\ldots \beta_k),
\\
\Upsilon^p_G(\Phi\bullet\beta_0,\ldots,\Phi\bullet\beta_k)&=
\Upsilon^p_G(\beta_0,\ldots,\beta_k),\ \ \ \ \Phi\in C^\infty(N,G)^G.
\end{split}\]
Again this follows from Stokes' theorem, respectively from the
property $F_G^{\Phi\bullet\beta_i}(x)+x=\Ad_\Phi(F_G^{\beta_i}(x)+x)$ of
the equivariant curvature.

If $N=G$ with conjugation action, and $p\in (S^m\g^*)^G$ we define \cite{je:gr}
\[ \eta^p_G=\Upsilon^p_G(0,\theta^L)\in \Om^{2m-1}_G(G).\]
Since $F^{\theta^L}_G(x)+x=\Ad_{g^{-1}}(x)$, we have
\[ \d_G\eta^p_G(x)=\Upsilon^p_G(\theta^L)-\Upsilon^p_G(0)
=p(\Ad_{g^{-1}}(x))-p(x)=0.\]
Thus $\eta^p_G$ are closed equivariant extensions of $\eta^p$.

\subsection{Families of flat connections}
Suppose that $\beta_t\in\Om^1(N,\g)^G$ is a family of invariant
1-forms, such that $F_G^{\beta_t}(x)+x=0$ for all $t$.  Then
\[ \d_G \Upsilon_G^p(0,\beta_t)(x)=-p(x)\] 
for all $t$, and so the difference
$\Upsilon_G^p(0,\beta_t)-\Upsilon_G^p(0,\beta_0)$ is equivariantly
closed. We will construct an equivariant primitive.  Let $\beta\in
\Om^1(N\times \Delta^1\times I,\g)^G$ be given as
\[\beta_{s,t}=s\beta_t,\ \ \ t\in I=[0,1],\ \ s\in
\Delta^1\cong [0,1].\]
We set 
\[ I^p_G(\{\beta_t\})(x)=\int_{\Delta^1\times I}
p(F^\beta_G(x)+x).\] 
\begin{lemma}\label{lem:upsilon}
If $m=\deg(p)\ge 2$,
\[{\Upsilon}^p_G(0,\beta_1)-{\Upsilon}^p_G(0,\beta_0)=\d_G
I^p_G(\{\beta_t\}),\]
\end{lemma}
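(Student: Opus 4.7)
The plan is to prove the lemma by a Stokes-theorem computation on the two-dimensional parameter space $\Delta^1 \times I$. View $\beta_{s,t} = s\beta_t$ as a $G$-invariant $\g$-valued $1$-form on the total space $N\times\Delta^1\times I$, pulled back from $N$ so that it carries no $ds$ or $dt$ components in its definition; however, its curvature on the total space does pick up such components. By the equivariant Bianchi identity applied on $N\times\Delta^1\times I$, the form $p(F^\beta_G(x)+x)$ is equivariantly closed with respect to the total equivariant differential, so if we split that total differential as $\d_G$ (acting along $N$) plus the ordinary de Rham differential along $\Delta^1\times I$, we get
\[
\d_G\, p(F^\beta_G(x)+x) = -\,d_{\Delta^1\times I}\, p(F^\beta_G(x)+x).
\]

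Next, I would integrate both sides over $\Delta^1\times I$. On the left, fiber integration commutes with $\d_G$ and produces $\d_G I^p_G(\{\beta_t\})(x)$. On the right, Stokes' theorem turns the integral of $d_{\Delta^1\times I}$ into a sum of contributions from the four faces of $\Delta^1\times I$. So the task reduces to identifying these four boundary contributions.

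The faces $\Delta^1 \times \{t=0\}$ and $\Delta^1 \times \{t=1\}$ immediately reproduce $\pm\Upsilon^p_G(0,\beta_0)$ and $\pm\Upsilon^p_G(0,\beta_1)$ respectively, directly from the definition (the signs $(-1)^{[(k+1)/2]}$ in the definition of $\Upsilon^p_G$ combine with the boundary orientation of $\Delta^1\times I$ to produce the final sign on the right-hand side). The face $\{s=0\}\times I$ contributes nothing: here $\beta|_{s=0}=0$ so $F^\beta_G+x$ restricts to $x$, a form of pure polynomial degree with no $dt$-component, and fiber integration over $I$ kills it.

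The main obstacle — and the source of the hypothesis $m\ge 2$ — is the face $\{s=1\}\times I$. The restriction of $F^\beta_G+x$ to this face is $F^{\beta_t}_G+x$ plus the term $dt\wedge\partial_t\beta_t$ produced when the exterior derivative on the total space hits the $t$-dependence of $\beta$. The flatness hypothesis $F^{\beta_t}_G(x)+x=0$ kills the first piece, leaving only $dt\wedge\partial_t\beta_t$. Evaluating $p$ on this quantity produces a factor $(dt)^m$, which vanishes precisely when $m\ge 2$. This is exactly why the lemma is stated under that hypothesis. With all four boundary contributions now accounted for, the identity $\Upsilon^p_G(0,\beta_1)-\Upsilon^p_G(0,\beta_0)=\d_G I^p_G(\{\beta_t\})$ follows, once the signs from simplex orientations and the convention $(-1)^{[(k+1)/2]}$ are matched up.
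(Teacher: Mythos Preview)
Your proposal is correct and follows essentially the same approach as the paper: both compute $\d_G I^p_G(\{\beta_t\})$ via Stokes' theorem on $\Delta^1\times I$, show the $s=0$ face contributes nothing because the restriction has no $\d t$-component, show the $s=1$ face vanishes because flatness leaves only $p(\d t\wedge\dot\beta_t)$ which is zero for $m\ge 2$, and identify the $t=0,1$ faces with $-\Upsilon^p_G(0,\beta_0)$ and $\Upsilon^p_G(0,\beta_1)$. Your framing via total closedness of $p(F^\beta_G(x)+x)$ and splitting the differential is slightly more explicit than the paper's, but the substance is identical.
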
 
\begin{proof}
We compute $\d_G I^p_G(\{\beta_t\})(x)$ by Stokes' theorem. There will
be four boundary contributions, corresponding to the four sides $s=0,\
s=1,\ t=0,\ t=1$ of the square $\Delta^1\times I$. The boundary
contribution for $s=1$ is given as the integral of 
\[ p(\d t\wedge \dot{\beta}_{t}+F_G^{\beta_{t}}(x)+x).\]

But $F_G^{\beta_{t}}(x)+x=0$ by assumption, and hence $p(\d t\wedge
\dot{\beta}_{t})=0$ since $\deg(p)\ge 2$. The boundary contribution of
$s=0$ vanishes as well, since the pull-back of $p(F^\beta_G(x)+x)$ has
no $\d t$-component there. The remaining two boundary contributions 
are ${\Upsilon}^p_G(0,\beta_1)$ and $-{\Upsilon}^p_G(0,\beta_0)$ as
desired. 
\end{proof}

The discussion for the non-equivariant case is essentially the same:
Given a family $\beta_t\in\Om^1(N,\g)$ with $F^{\beta_t}=0$, the
integral $I^p(\{\beta_t\})=\int_{\Delta^1\times I} p(F^\beta)$ has the
property $\Upsilon^p(0,\beta_1)-\Upsilon^p(0,\beta_0)=\d
I^p(\{\beta_t\})$. Writing $F^\beta=\d s \wedge\beta_t+s\d t
\wedge\dot{\beta}_t+\f{s(s-1)}{2}[\beta_t,\beta_t]$, we may carry out
the $s$-integration in the definition of $\Upsilon^p$, and find 
that $\Upsilon^p$ is explicitly given as a rational multiple of 
\begin{equation}\label{eq:explicit}
 \int_0^1
p(\beta_t,\,\dot{\beta}_t,\,[\beta_t,\beta_t],\,\ldots,\,[\beta_t,\beta_t]).\end{equation}
Here we have associated to $p\in (S^m\g^*)^G$ the multilinear form
(again denoted $p$) such that $p(x,\ldots,x)=p(x)$.

\subsection{The form $\varpi^p_G$}
The theory described above works equally well for $\Om(N)$ replaced
with $\Gamma(A)$, for $A\to N$ a Lie algebroid. In the $G$-equivariant
case, one has to require that the $G$-action on $A$ admits
infinitesimal generators $x_A$. As before, we will view 
$\Om(N)\subset \Gamma(\wedge A^*)$ respectively 
$\Om_G(N)\subset \Gamma_G(\wedge A^*)$ as the basic subcomplexes. 

Our goal is to construct primitives of $\a^*\eta^p_G\in
\Gamma_G(\wedge A^*)$, where $A\to G$ is the Atiyah algebroid over
$G$. Let $\kappa_t\in \Gamma(A^*)\otimes\g$ as in Section
\ref{subsec:kappat}. With $I^p_G(\{\kappa_t\})\in \Gamma_G(\wedge
A^*)$ as above, put
\[ \varpi^p_G=I^p_G(\{\kappa_t\})-\Upsilon^p_G(0,\a^*\theta^L,\kappa_0).\]
\begin{theorem}
The forms $\varpi^p_G$ are equivariant primitives of $\a^*\eta^p_G$:
\[ \d_G\varpi^p_G(x)=\a^*\eta^p_G(x).\]
\end{theorem}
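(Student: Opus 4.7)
\medskip

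\noindent\textbf{Proof plan.} The strategy is to combine two ingredients already in place: Lemma \ref{lem:upsilon} applied to the family $\kappa_t$, and the Stokes-type relation $\d_G\Upsilon^p_G(\beta_0,\beta_1,\beta_2)=\Upsilon^p_G(\beta_1,\beta_2)-\Upsilon^p_G(\beta_0,\beta_2)+\Upsilon^p_G(\beta_0,\beta_1)$ applied to the Bott cochain with entries $(0,\a^*\theta^L,\kappa_0)$. The two expressions share common ``mixed'' terms which must be matched by a gauge transformation; this matching is what forces the $\theta^L$ to appear in the second entry of $\Upsilon^p_G(0,\a^*\theta^L,\kappa_0)$.

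First I would verify that $\kappa_t$ satisfies the hypothesis of Lemma \ref{lem:upsilon}: by Lemma \ref{lem:kappa} one has $F_G^{\kappa_t}(x)+x=0$, so the lemma applies (with $\beta_t=\kappa_t$) and yields
\[
\d_G I^p_G(\{\kappa_t\})=\Upsilon^p_G(0,\kappa_1)-\Upsilon^p_G(0,\kappa_0).
\]
Next I would compute
\[
\d_G\Upsilon^p_G(0,\a^*\theta^L,\kappa_0)
=\Upsilon^p_G(\a^*\theta^L,\kappa_0)-\Upsilon^p_G(0,\kappa_0)+\Upsilon^p_G(0,\a^*\theta^L),
\]
and recognize the last term as $\a^*\eta^p_G$ by the very definition of $\eta^p_G$.

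The pivotal step is to identify the two ``mixed'' terms $\Upsilon^p_G(0,\kappa_1)$ and $\Upsilon^p_G(\a^*\theta^L,\kappa_0)$ via gauge invariance. Let $\Phi\in C^\infty(G,G)^G$ be the identity map (equivariant under conjugation). Since $\Ad_g\theta^L=\theta^R$ one has $\Phi\bullet(\a^*\theta^L)=0$, and by Lemma \ref{lem:kappa} one has $\Phi\bullet\kappa_0=\Ad_g\kappa_0-\a^*\theta^R=\kappa_1$. Gauge invariance of the Bott forms then gives
\[
\Upsilon^p_G(\a^*\theta^L,\kappa_0)=\Upsilon^p_G(\Phi\bullet\a^*\theta^L,\Phi\bullet\kappa_0)=\Upsilon^p_G(0,\kappa_1).
\]
Plugging this into the two displayed formulas and subtracting, the $\Upsilon^p_G(\a^*\theta^L,\kappa_0)$ and $\Upsilon^p_G(0,\kappa_0)$ contributions cancel, and only $\a^*\eta^p_G$ survives, giving $\d_G\varpi^p_G=\a^*\eta^p_G$ (up to the sign convention built into the definition of $\Upsilon^p_G$, which I would verify against the sign $(-1)^{[(k+1)/2]}$ in Section \ref{subsec:bott1}).

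The main obstacle I anticipate is bookkeeping of signs: the various sign conventions in $(-1)^{[(k+1)/2]}$, the orientation of the simplex, the left/right Maurer-Cartan conventions, and the fact that $\iota^*\theta^R=-\theta^L$ all enter the final identity. It is worth checking these carefully on the low-degree model case (e.g.\ $p$ the invariant quadratic form of Section 5), where $\varpi^p_G$ should specialize to the $\varpi$ constructed earlier, for which $\d_G\varpi=\a^*\eta_G$ has already been verified in Theorem \ref{th:3form}.
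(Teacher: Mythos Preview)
Your proposal is correct and follows essentially the same route as the paper: both apply Lemma~\ref{lem:upsilon} to the family $\kappa_t$ (using $F_G^{\kappa_t}(x)+x=0$ from Lemma~\ref{lem:kappa}), both use the Stokes relation for the Bott cochain $\Upsilon^p_G(0,\a^*\theta^L,\kappa_0)$, and both identify $\Upsilon^p_G(0,\kappa_1)$ with $\Upsilon^p_G(\a^*\theta^L,\kappa_0)$ via gauge invariance. The only cosmetic difference is the direction of the gauge transformation: you act by $\Phi=g$ (so $\Phi\bullet\a^*\theta^L=0$ and $\Phi\bullet\kappa_0=\kappa_1$), while the paper acts by $g^{-1}$ (so $g^{-1}\bullet 0=\a^*\theta^L$ and $g^{-1}\bullet\kappa_1=\kappa_0$); these are equivalent. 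Your caution about the sign bookkeeping is well placed.
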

\begin{proof}
Since $\kappa_{1}=g\bullet\kappa_0$ by Lemma \ref{lem:kappa}, we have 
\[ \Upsilon^p_G(0,\kappa_1)=\Upsilon^p_G(0,g\bullet\kappa_0)
=\Upsilon^p_G(g^{-1}\bullet 0,\kappa_0)=\Upsilon^p_G(\a^*\theta^L,\kappa_0).\]
Lemma \ref{lem:kappa} also shows that $F_G^{\kappa_t}(x)+x=0$. Hence
Lemma 
\ref{lem:upsilon} applies and
gives
\[\begin{split}
\d_G I^p_G(\{\kappa_t\})&=
{\Upsilon}^p_G(0,\kappa_1)-{\Upsilon}^p_G(0,\kappa_0)\\&=
\Upsilon^p_G(\a^*\theta^L,\kappa_0)+{\Upsilon}^p_G(\kappa_0,0)\\
&=\Upsilon^p_G(\a^*\theta^L,0)+\d_G\Upsilon^p_G(0,\a^*\theta^L,\kappa_0).
\qedhere\end{split}
\]
\end{proof}

Setting the equivariant parameter equal to $0$, i.e. defining
$\varpi^p=\varpi^p_G(0)$, this also gives in
particular non-equivariant primitives, $\d\varpi^p=\a^*\eta^p$. 

\subsection{The case $p(x)=\hh x\cdot x$}
If $p$ is homogeneous of degree $\deg(p)=2$, the formulas simplify.
With $\beta_{s,t}=s\kappa_t$, the definition of
$I^p_G(\{\kappa_t\}(x)$ gives
\[ I^p_G(\{\kappa_t\})(x)=\int_{\Delta^1\times I}p(F_G^\beta(x)+x)
=\int_{\Delta^1\times I}p(\d s\wedge \kappa_t+s \d
t\wedge\dot{\kappa}_t).\]
Indeed, only the coefficient of $\d s\wedge \d t$ in
$p(F_G^\beta(x)+x)$ will contributes to the integral.  Hence
\[ I^p_G(\{\kappa_t\})(x)=\int_0^1 p(\kappa_t,\dot{\kappa}_t),\]
where we associated to $p$ a symmetric bilinear form, again denote
$p$, with $p(x,x)=p(x)$. In particular, $I^p_G(\{\kappa_t\})=
I^p(\{\kappa_t\})$. 
A similar discussion applies to the 2-dimensional integral defining
$\Upsilon^p_G(0,\a^*\theta^L,\kappa_0)$. One obtains
\[ \Upsilon^p_G(0,\a^*\theta^L,\kappa_0)(x)=p(\a^*\theta^L,\kappa_0),\]
which again is independent of $x$. We conclude that if $p(x)=\hh x\cdot x$
for an invariant inner product $\cdot$ on $\g$, then $\varpi_G^p$
coincides with $\varpi^p$, and is given by the Formula
\eqref{eq:varpiformula}.

\subsection{Pull-back to the group unit}
The inclusion map $\iota\colon \{e\}\to G$ is $G$-equivariant, and
lifts to a morphism of Lie algebroids, $L\g\to A$. (In fact, 
$L\g=\iota^! A$.) 
Let 
\[ \sig^p=\iota^!\varpi^p,\ \ \sig^p_G=\iota^!\varpi^p_G\]
be the resulting elements of $\Gamma(\wedge L\g^*)$, resp.  
$\Gamma_G(\wedge L\g^*)$. Since $\iota^*\eta^p_G=0$, it is immediate
that these forms are closed (resp. equivariantly closed) for the Lie
algebra differential.

The pull-back of $\kappa^{L\g}:=\iota^!\kappa$ may be viewed as minus the
right-invariant Maurer-Cartan forms for the group $LG$. 
Since the pull-back of $\Upsilon^p(0,\a^*\theta^L,\kappa_0)$ vanishes,
Equation \eqref{eq:explicit} shows that $\sig^p$ is a rational
multiple of 
\[
p(\kappa_t^{L\g},\,\dot{\kappa}_t^{L\g},\,[\kappa_t^{L\g},\,\kappa_t^{L\g}],\ldots,\,
[\kappa_t^{L\g},\,\kappa_t^{L\g}]).\]
These forms are discussed by Pressley-Segal in \cite[Chapter
4.11]{pr:lo}, who prove that for compact $G$ the cohomology ring
$H^*(LG)$ is generated by the left-invariant forms, and is in fact
isomorphic to the Lie algebra cohomology of $L\g$. The forms $\sig^p$
arise as some of the generators of the cohomology. (The remaining
generators are obtained by pull-back under the evaluation map $LG\to
G,\ \gamma\mapsto \gamma_0$). Our theory thus provides closed
$G$-equivariant extensions of the Pressley-Segal generators, and gives
an explicit transgressions of these forms to $\eta^p,\eta^p_G$.

\begin{appendix}
\section{Chern-Simons forms on Lie algebroids}
In this appendix, we extend some formulas for Chern-Simons forms to
the case of Lie algebroids. We omit proofs, which are all given by
straightforward calculations (extending the well-known case $A=TN$). 

\subsection{Non-equivariant Chern-Simons forms}\label{subsec:CS1}
Suppose $A\to N$ is a Lie algebroid. We will consider the elements of
$\Gamma(\wedge A^*)$ as `forms on $A$'.  For any $\g$-valued 1-form
$\beta\in \Gamma(A^*)\otimes\g$ with `curvature'
$F^\beta=\d\beta+\hh[\beta,\beta]_\g$, the 4-form $\hh F^\beta\cdot
F^\beta\in \Gamma(\wedge^4 A^*)$ is exact. A primitive is given by the
\emph{Chern-Simons form} $\on{CS}(\beta)=\Upsilon^p(0,\beta)$ for
$p(x)=\hh x\cdot x$, where we have used the notation from Section
\ref{subsec:bott1}. Thus $\d\on{CS}(\beta)=p(F^\beta)=\hh F^\beta\cdot
F^\beta$.  A short calculation gives
\[\on{CS}(\beta)=\f{1}{2}(\d\beta)\cdot\beta+\f{1}{6}\beta\cdot[\beta,\beta]_\g\in
\Gamma(\wedge^3 A^*).\]
For $\Phi\in C^\infty(N,G)$ let
$\Phi\bullet\beta=\Ad_\Phi(\beta)-\Phi^*\theta^R$ be the gauge
transform of $\beta$. (Here the last term is viewed as an element of
of $\Gamma(A^*)$, by the pull-back map $\Om(N)\to \Gamma(\wedge A^*)$.

\begin{proposition}
For $\beta\in \Gamma(A^*)\otimes \g$ and $\Phi\in C^\infty(N,G)$, we have 
\begin{equation}
 \on{CS}(\Phi\bullet\beta)=\on{CS}(\beta)+\Phi^*\eta-\hh \d\,
 (\beta\cdot \Phi^*\theta^L).
\end{equation}
Given a smooth family $\beta_t$
one has the transgression formula,
\begin{equation}\label{eq:transgression}
\f{\p}{\p t}\on{CS}(\beta_t)=\dot{\beta}_t\cdot F^{\beta_t}-\hh
\d\,(\beta_t \cdot\dot{\beta}_t).
\end{equation}
\end{proposition}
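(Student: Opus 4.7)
The strategy is to reduce both identities to the same formal manipulations that prove the classical versions on $\Om(N)$. Since the differential calculus on $\Gamma(\wedge A^*)$ enjoys the same Leibniz and Cartan identities as the de Rham calculus (cf.\ Section~\ref{sec:trans}), all the usual tools apply verbatim. The essential inputs are the $\Ad$-invariance of $\cdot$, yielding the cyclic identity $\mu\cdot[\nu,\rho]=[\mu,\nu]\cdot\rho$ for $\g$-valued $A$-forms; the graded symmetry $\mu\cdot\nu=(-1)^{|\mu||\nu|}\nu\cdot\mu$; the graded symmetry of the bracket $[\mu,\nu]=-(-1)^{|\mu||\nu|}[\nu,\mu]$ (so that $[\beta,\beta]$ generally does not vanish for $\beta$ a 1-form); and the Maurer-Cartan equation $\d\theta^R=\hh[\theta^R,\theta^R]$.

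The transgression formula is the easier of the two, and I would just differentiate $\on{CS}(\beta_t)=\hh(\d\beta_t)\cdot\beta_t+\tfrac{1}{6}\beta_t\cdot[\beta_t,\beta_t]$ in $t$. The three cubic contributions $\tfrac{1}{6}(\dot\beta_t\cdot[\beta_t,\beta_t]+\beta_t\cdot[\dot\beta_t,\beta_t]+\beta_t\cdot[\beta_t,\dot\beta_t])$ all coincide after the cyclic identity and $[\dot\beta_t,\beta_t]=[\beta_t,\dot\beta_t]$, combining to $\hh\dot\beta_t\cdot[\beta_t,\beta_t]$. Rewriting the remaining quadratic term $\hh(\d\dot\beta_t)\cdot\beta_t$ via the Leibniz identity $\d(\beta_t\cdot\dot\beta_t)=(\d\beta_t)\cdot\dot\beta_t-\beta_t\cdot\d\dot\beta_t$ together with $\beta_t\cdot\d\dot\beta_t=(\d\dot\beta_t)\cdot\beta_t$ reassembles the result into $\dot\beta_t\cdot F^{\beta_t}-\hh\d(\beta_t\cdot\dot\beta_t)$.

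For the gauge formula, the cleanest route goes through the Bott form machinery of Section~\ref{subsec:bott1}. Gauge-invariance of $\Upsilon^p$ applied with $\Phi^{-1}$, combined with the computation $\Phi^{-1}\bullet 0=\Phi^*\theta^L$, rewrites $\on{CS}(\Phi\bullet\beta)$ as $\Upsilon^p(\Phi^*\theta^L,\beta)$ (up to an overall sign fixed by convention). The simplicial identity
\[\d\Upsilon^p(0,\Phi^*\theta^L,\beta)=\Upsilon^p(\Phi^*\theta^L,\beta)-\Upsilon^p(0,\beta)+\Upsilon^p(0,\Phi^*\theta^L),\]
together with $\eta=\eta^p=\Upsilon^p(0,\theta^L)$ for $p(x)=\hh x\cdot x$, then produces $\on{CS}(\beta)+\Phi^*\eta$ plus the differential of a 2-simplex Bott form. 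A one-line $s$-integration on $\Delta^2$ evaluates the latter: only the cross term $(\d s_1\wedge\Phi^*\theta^L)\cdot(\d s_2\wedge\beta)$ in $\hh F^\gamma\cdot F^\gamma$ carries $\d s_1\wedge\d s_2$, and its integral is a scalar multiple of $\beta\cdot\Phi^*\theta^L$, giving the claimed exact correction $-\hh\d(\beta\cdot\Phi^*\theta^L)$. A more pedestrian alternative, substituting $\Phi\bullet\beta=\Ad_\Phi\beta-\Phi^*\theta^R$ directly into the explicit formula for $\on{CS}$ and simplifying using the Maurer-Cartan equation and the identity $\d(\Ad_\Phi\beta)=\Ad_\Phi\d\beta+[\Phi^*\theta^R,\Ad_\Phi\beta]$, also works but is more tedious.

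The main obstacle is purely bookkeeping: tracking graded signs from the wedge product of $\g$-valued forms, the symmetric behavior of the bracket on odd-degree forms, and the orientation convention for $\Delta^k$ must all be handled consistently with the paper's conventions for $\Upsilon^p$ and $\on{CS}$. No new ideas beyond classical Chern--Simons calculus are required.
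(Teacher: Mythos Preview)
Your proposal is correct. The paper actually omits the proof of this proposition entirely, stating in the appendix introduction that the proofs ``are all given by straightforward calculations (extending the well-known case $A=TN$)''. Your direct differentiation for the transgression formula is exactly the intended straightforward calculation, and your Bott-form argument for the gauge formula is an elegant alternative that cleanly exploits the machinery of Section~\ref{subsec:bott1} (gauge invariance of $\Upsilon^p$ plus the simplicial identity), avoiding the more tedious direct substitution of $\Phi\bullet\beta$ into the explicit Chern--Simons formula. Both routes are valid; the only care required, as you note, is consistent sign bookkeeping with the paper's convention $\Upsilon^p(\beta_0,\ldots,\beta_k)=(-1)^{[\frac{k+1}{2}]}\int_{\Delta^k}p(F^\beta)$ and the identification $\Phi^{-1}\bullet 0=\Phi^*\theta^L$.
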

Suppose $\beta_{t+1}=\Phi\bullet \beta_t$ for some given gauge
transformation $\Phi\in C^\infty(N,G)$. Integrating
\eqref{eq:transgression} over $[0,1]$, and using the property 
of Chern-Simons forms under gauge transformations, we obtain
\begin{equation} \label{eq:CSform}
\int_0^1 \dot{\beta}_t\cdot F^{\beta_t}
=\Phi^*\eta+\d Q^\beta\end{equation}
where $Q^\beta\in\Gamma(\wedge^2 A^*)$ is the 2-form, 
\[ Q^\beta=\hh \Phi^*\theta^L\cdot\beta_0+\hh \int_0^1 \beta_t
\cdot\dot{\beta}_t.\]

\subsection{$G$-equivariant Chern-Simons forms}
Suppose that the group $G$ acts on $A\to N$, with infinitesimal
generators $x\mapsto x_A$. Then we can consider the complex 
$\Gamma_G(\wedge A^*)$ of $G$-equivariant forms. 

Suppose $\beta\in (\Gamma(A^*)\otimes\g)^G$, and let 
$F_G^\beta=\d_G\beta+\hh[\beta,\beta]$ be its `equivariant
curvature'. We have 
\[ \d_G F_G^\beta+[\beta,F_G^\beta(x)+x]=0.\]
As a consequence, the equivariant 4-form $p(F_G^\beta(x)+x)-p(x)$ for
$p(x)=\hh x\cdot x$ is equivariantly closed. 
\footnote{In the case $A=TN$, the form $\beta$ may be regarded 
as the restriction to $N\times\{e\}$ of a principal connection on
$N\times G$, invariant relative to the diagonal
action $k.(n,u)=(k.n,ku)$. The pull-back of the $G$-equivariant
curvature $F_G^\theta(x)$ to $N\times\{e\}$ is $F_G^\beta(x)+x$.}
Let $\on{CS}_G(\beta)=\Upsilon^p_G(0,\beta)
\in\Gamma_G(\wedge^3 A^*)$, with differential
$p(F_G^\beta(x)+x)-p(x)$. One finds
\[ \on{CS}_G(\beta)(x)=\f{1}{2} 
\d_G\beta(x)\cdot\beta +\f{1}{6}\beta\cdot[\beta,\beta]_\g+\beta\cdot
x.\]

\begin{proposition}
For $\beta\in (\Gamma(A^*)\otimes\g)^G$ and $\Phi\in C^\infty(N,G)^G$,  
\begin{equation} 
\on{CS}_G(\Phi\bullet\beta)=\on{CS}_G(\beta)+\Phi^*\eta_G-\hh\d_G(\beta\cdot\Phi^*\theta^L).
\end{equation}
Given a smooth family $\beta_t\in (\Gamma(A^*)\otimes\g)^G$, one has 
\[
\f{\p}{\p t}\on{CS}_G(\beta_t)(x)=\dot{\beta}_t\cdot (F^{\beta_t}_G(x)+x)-\hh
\d\,(\beta_t \cdot\dot{\beta}_t).
\]
\end{proposition}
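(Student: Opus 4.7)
The proposition states two identities: the gauge-transformation formula for $\on{CS}_G$ and a transgression formula along a smooth family $\beta_t$. I would establish the transgression formula first by direct differentiation, then derive the gauge formula using the Bott-form coboundary identity combined with gauge invariance of $\Upsilon^p_G$. Both arguments are equivariant versions of the non-equivariant statements recorded immediately before, with the simple modification that $\d$ is replaced by $\d_G$ and the curvature $F^\beta$ by $F_G^\beta(x)+x$.

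\textbf{Transgression.} I would start from the explicit formula
\[ \on{CS}_G(\beta)(x) = \hh\d_G\beta(x)\cdot\beta + \f{1}{6}\beta\cdot[\beta,\beta]_\g + \beta\cdot x \]
and apply $\p/\p t$. The Leibniz rule $\d_G(\dot\beta_t\cdot\beta_t) = \d_G\dot\beta_t\cdot\beta_t - \dot\beta_t\cdot\d_G\beta_t$, together with the graded symmetry $\dot\beta_t\cdot\beta_t = -\beta_t\cdot\dot\beta_t$ on $1$-forms and the identity $\dot\beta_t\cdot\d_G\beta_t = \d_G\beta_t\cdot\dot\beta_t$ (obtained by graded symmetry on each form-degree summand of $\d_G\beta_t$), turns the two $\hh\d_G$-terms into $-\hh\d_G(\beta_t\cdot\dot\beta_t) + \dot\beta_t\cdot\d_G\beta_t$. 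The three cubic terms collapse to $\hh\dot\beta_t\cdot[\beta_t,\beta_t]$ using the graded-cyclic invariance of $\cdot$ and the graded-\emph{symmetry} of $[\cdot,\cdot]_\g$ on $1$-forms. Adding $\dot\beta_t\cdot x$ and regrouping produces
\[ \dot\beta_t\cdot\bigl(\d_G\beta_t(x)+\hh[\beta_t,\beta_t]+x\bigr) - \hh\d_G(\beta_t\cdot\dot\beta_t) = \dot\beta_t\cdot\bigl(F_G^{\beta_t}(x)+x\bigr) - \hh\d_G(\beta_t\cdot\dot\beta_t),\]
which is the claimed formula.

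\textbf{Gauge formula.} Writing $\on{CS}_G(\beta) = \Upsilon^p_G(0,\beta)$ for $p(x) = \hh x\cdot x$, I would apply the $3$-term Bott coboundary identity to the triple $(0,\Phi^*\theta^L,\beta)$:
\[ \d_G\Upsilon^p_G(0,\Phi^*\theta^L,\beta) = \Upsilon^p_G(\Phi^*\theta^L,\beta) - \on{CS}_G(\beta) + \Phi^*\eta_G.\]
The Maurer-Cartan identity $(\Phi^{-1})^*\theta^R = -\Phi^*\theta^L$ yields $\Phi^{-1}\bullet 0 = \Phi^*\theta^L$, so by gauge invariance of $\Upsilon^p_G$ one has $\Upsilon^p_G(\Phi^*\theta^L,\beta) = \Upsilon^p_G(\Phi\bullet(\Phi^{-1}\bullet 0),\Phi\bullet\beta) = \Upsilon^p_G(0,\Phi\bullet\beta) = \on{CS}_G(\Phi\bullet\beta)$. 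What remains is an explicit evaluation of the $2$-simplex integral $\Upsilon^p_G(0,\Phi^*\theta^L,\beta)$ for the quadratic $p$: setting $\tilde\beta = s_1\Phi^*\theta^L + s_2\beta$ on $N\times\Delta^2$, only the cross-term $2(ds_1\wedge\Phi^*\theta^L)\cdot(ds_2\wedge\beta)$ in $\hh F^{\tilde\beta}\cdot F^{\tilde\beta}$ survives the $ds_1\wedge ds_2$-extraction (the $x$-shifts in $F_G^{\tilde\beta}(x)+x$ carry no $ds_i$), and fiber integration over $\Delta^2$ produces $\pm\hh\,\beta\cdot\Phi^*\theta^L$. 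Rearranging the coboundary identity then gives $\on{CS}_G(\Phi\bullet\beta) = \on{CS}_G(\beta) + \Phi^*\eta_G - \hh\d_G(\beta\cdot\Phi^*\theta^L)$.

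\textbf{Main obstacle.} Nothing here is conceptually hard; the only genuine work is consistent sign bookkeeping. The pairing $\cdot$ is graded-symmetric of sign $(-1)^{pq}$, the bracket $[\cdot,\cdot]_\g$ is graded-symmetric on $1$-forms, the Leibniz rule for $\d_G$ carries its standard $(-1)^{|\phi|}$, and the orientation on $\Delta^k$ interacts with the prefactor $(-1)^{[(k+1)/2]}$ in the definition of $\Upsilon^p_G$ so as to be compatible both with the coboundary identity and with the normalization of $\Phi^*\eta_G$. None of these individually is difficult, but they must all be coordinated; once the conventions are pinned down, the Bott-form identity and gauge invariance do essentially all of the work, and the same computation carries over \emph{verbatim} from the non-equivariant case presented earlier.
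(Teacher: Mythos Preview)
The paper does not actually prove this proposition: the appendix opens with ``We omit proofs, which are all given by straightforward calculations (extending the well-known case $A=TN$).'' So there is nothing to compare against beyond the implied method of direct expansion.

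Your transgression argument is exactly such a straightforward calculation and is correct.

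For the gauge formula you take a different route, using the Bott coboundary identity on the triple $(0,\Phi^*\theta^L,\beta)$ together with gauge invariance of $\Upsilon^p_G$, rather than directly expanding $\on{CS}_G(\Phi\bullet\beta)$ from the explicit formula. This is more conceptual, and in principle it works. But be warned that the sign bookkeeping you flag as the ``main obstacle'' is genuinely treacherous here: with the sign prefactor $(-1)^{[(k+1)/2]}$ and the stated coboundary identity taken at face value, a naive run of your argument with $\beta=0$ yields $\on{CS}(\Phi\bullet 0)=-\Phi^*\eta$, whereas direct evaluation of the explicit $\on{CS}$ formula on $-\Phi^*\theta^R$ gives $+\Phi^*\eta$. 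So either the orientation convention for $\int_{\Delta^k}$ or the normalization of the explicit $\on{CS}$ formula must be tracked more carefully than the paper spells out before your rearrangement step can be trusted. The direct-expansion approach the paper intends avoids this by never invoking the simplex integrals.
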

Hence, if $\beta_t\in (\Gamma(A^*)\otimes\g)^G$ is a family of invariant forms
with $\beta_1=\Phi\bullet \beta_0$, and letting $Q^\beta$ be defined as
above, one finds
\begin{equation}\label{eq:CSform2}
 \int_0^1 \dot{\beta}_t\cdot (F^{\beta_t}_G(x)+x)=\Phi^*\eta_G+\d_G Q^\beta.\end{equation}

\subsection{Properties of the functional $Q$}
Here are some properties of the functional $Q(\beta)=Q^\beta$.
\begin{proposition}[Properties of the functional $Q$]\label{prop:q}
\noindent
\begin{enumerate}
\item {\bf Reparametrization invariance.}
Let $\beta_t \in 
\Gamma(A^*)\otimes\g$ be a smooth family of forms with $\beta_{t+1}=\Phi\bullet
\beta_t$, and suppose $\phi\colon \R\to \R$ is an orientation preserving
  diffeomorphism such that $\phi(t+1)=\phi(t)+1$. Then 
$Q(\beta\circ
  \phi)=Q(\beta)$. 
\item {\bf Multiplicative property.} 
Let $\beta',\beta''\colon \R\to
\Gamma(A^*)\otimes\g $ be two maps such that $\beta'_{t+1}=\Phi'\bullet\beta'_t$,
$\beta''_{t+1}=\Phi''\bullet\beta''_t$. Suppose
$\beta'_1=\beta''_0$, and let the concatenation 
be defined for $0\le t\le 1$ by 
\[  (\beta''*\beta')_t=\begin{cases}\beta'_{2t}& 0\le
  t\le \hh \\
\beta''_{2t-1}& \hh \le t\le 1.
\end{cases}\]
and extend to all $t$ by the property, $
(\beta''*\beta')_{t+1}=(\Phi''\Phi')\bullet
(\beta''*\beta')_t$.  (The resulting $\beta$ is piecewise smooth, and it is
smooth e.g. if $\beta',\beta''$ are constant near $t=0$.) Then
\begin{equation}\label{Qconcat}
Q(\beta''*\beta')=Q(\beta')+Q(\beta'')+(\Phi',\Phi'')^*\lambda \end{equation}
where $\lambda\in\Om^2(G\times G)$ is the 2-form, 
$\lambda=\hh \pr_1^*\theta^L\cdot \pr_2^*\theta^R$. 
\item {\bf Inversion.}
Let $\beta\colon \R\to \Om^1(N,\g)$ with $\beta_{t+1}=\Phi\bullet
\beta_t$, 
and define $\beta^-_t=\beta_{-t}$. Then 
$\beta^-_{t+1}=\Phi^{-1}\bullet\beta_t^-$, and we have
$Q(\beta^-)=-Q(\beta)$.  
\end{enumerate}
\end{proposition}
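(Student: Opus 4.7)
The three statements are all direct consequences of the definition
\[ Q^\beta = \hh \Phi^*\theta^L\cdot\beta_0 + \hh \int_0^1 \beta_t\cdot\dot{\beta}_t \]
together with the transformation property $\beta_{t+1}=\Phi\bullet\beta_t=\Ad_\Phi\beta_t-\Phi^*\theta^R$ and the standard identities $\theta^R=\Ad_g\theta^L$, $(g^{-1})^*\theta^L=-\theta^R$, and the gauge product rule $(\Phi''\Phi')^*\theta^L=(\Phi')^*\theta^L+\Ad_{(\Phi')^{-1}}(\Phi'')^*\theta^L$. My plan is to compute the right-hand side of each statement by direct manipulation, using only these identities together with the invariance of the pairing $\cdot$ on $\g$.

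For part (a), setting $\tilde\beta_t=\beta_{\phi(t)}$ and changing variables $s=\phi(t)$ in the integral, the transgression part of $Q(\tilde\beta)$ becomes $\hh\int_{\phi(0)}^{\phi(0)+1}\beta_s\cdot\dot\beta_s\,ds$. The difference with $\hh\int_0^1\beta_s\cdot\dot\beta_s\,ds$ equals $\hh\int_0^{\phi(0)}(\beta_{u+1}\cdot\dot\beta_{u+1}-\beta_u\cdot\dot\beta_u)\,du$. The transformation property combined with invariance gives $\beta_{u+1}\cdot\dot\beta_{u+1}-\beta_u\cdot\dot\beta_u=-\Phi^*\theta^L\cdot\dot\beta_u=-\tfrac{d}{du}(\Phi^*\theta^L\cdot\beta_u)$, and integrating produces the boundary correction $-\hh\Phi^*\theta^L\cdot(\beta_{\phi(0)}-\beta_0)$, which exactly cancels the change in the $\hh\Phi^*\theta^L\cdot\beta_0$ term.

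For part (b), write $\beta=\beta''\ast\beta'$ and substitute $s=2t$ (resp.\ $s=2t-1$) in each half of the concatenation: the factor of $2$ from $\dot\beta_t=2\dot\beta'_{2t}$ kills the half-interval, so
\[ \hh\int_0^1\beta_t\cdot\dot\beta_t\,dt = \hh\int_0^1\beta'_s\cdot\dot\beta'_s\,ds + \hh\int_0^1\beta''_s\cdot\dot\beta''_s\,ds. \]
For the Maurer-Cartan term, the product rule yields
\[ \hh(\Phi''\Phi')^*\theta^L\cdot\beta_0 = \hh(\Phi')^*\theta^L\cdot\beta'_0 + \hh(\Phi'')^*\theta^L\cdot\Ad_{\Phi'}\beta'_0, \]
using invariance of $\cdot$. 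Since $\beta''_0=\beta'_1=\Ad_{\Phi'}\beta'_0-(\Phi')^*\theta^R$, the discrepancy between $\Ad_{\Phi'}\beta'_0$ and $\beta''_0$ in this expression produces exactly $\hh(\Phi'')^*\theta^L\cdot(\Phi')^*\theta^R$, which is the pull-back of $\lambda$ under $(\Phi'',\Phi')$ (modulo the labelling convention).

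Part (c) is a straightforward verification: with $\beta^-_t=\beta_{-t}$, one has $(\Phi^{-1})^*\theta^L=-\Phi^*\theta^R$, and reversing orientation gives $\hh\int_0^1\beta^-\cdot\dot\beta^-=-\hh\int_{-1}^0\beta\cdot\dot\beta$. Using the transformation law to relate the integral over $[-1,0]$ to the one over $[0,1]$, together with the identity $\beta_{-1}=\Ad_{\Phi^{-1}}\beta_0+\Phi^*\theta^L$ and the vanishing $\theta^L\cdot\theta^L=0$ (symmetry of $\cdot$ against antisymmetry of wedge), all boundary terms collapse to produce $Q(\beta^-)=-Q(\beta)$.

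The only real obstacle is bookkeeping of signs and the placement of $\Ad$ factors; the computations are otherwise routine, and part (b) is the most delicate because one must match the boundary correction to the Maurer-Cartan product rule precisely.
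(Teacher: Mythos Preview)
Your argument is correct and matches the paper's approach in all essentials: both treat part~(b) by observing that the integral terms separate cleanly under concatenation and that only the Maurer--Cartan boundary terms survive, using the product rule for $(\Phi''\Phi')^*\theta^L$ together with $\beta''_0=\Ad_{\Phi'}\beta'_0-(\Phi')^*\theta^R$; and both dismiss part~(c) as a straightforward calculation. For part~(a) your treatment is in fact slightly more economical than the paper's: rather than first reducing to the two special cases $\phi(0)=0$ (declared obvious) and $\phi(t)=t+u$ (computed explicitly), you handle a general $\phi$ in one step by changing variables to $[\phi(0),\phi(0)+1]$ and using quasi-periodicity to compare with $[0,1]$. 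Your parenthetical remark about the labelling convention in~(b) is well placed: the computation naturally produces $\hh(\Phi'')^*\theta^L\cdot(\Phi')^*\theta^R$, and the identification with $(\Phi',\Phi'')^*\lambda$ depends on which factor of $G\times G$ one calls the first.
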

\begin{proof}
(a)  
The claim is obvious if $\phi(0)=0$, since both the integral and the
  term $\hh \Phi^*\theta^L\cdot\beta_0$ are unchanged in this case. It
  remains to check the case $\phi(t)=t+u$, for some fixed $u\in\R$. It is
  enough to consider the case $0\le u\le 1$. We have
\[ \begin{split}\int_0^1 \beta_{t+u}\cdot \dot{\beta}_{t+u}&=\int_u^{1+u} \beta_t
  \cdot\dot{\beta}_t \\
&=\int_u^1 \beta_t
  \cdot\dot{\beta}_t+\int_0^u (\Ad_\Phi\beta_{t}-\Phi^*\theta^R)\cdot
  \Ad_\Phi\dot{\beta}_{t}\\
&=\int_0^1 \beta_t
  \cdot\dot{\beta}_t-\int_0^u \Phi^*\theta^L\cdot \dot{\beta}_t\\
&= \int_0^1 \beta_t \cdot\dot{\beta}_t-\Phi^*\theta^L\cdot (\beta_u-\beta_0).\qedhere
\end{split}\]

(b) In calculating $Q(\beta)-Q(\beta')-Q(\beta'')$, the integral
contributions cancel out, and we are left with 
\[ \begin{split}
Q(\beta)-Q(\beta')-Q(\beta'')
&=\hh\big((\Phi''\Phi')^*\theta^L\cdot \beta_0-(\Phi')^*\theta^L\cdot\beta_0
-(\Phi'')^*\theta^L\cdot \beta_{1/2}\big).
\end{split}\]
Since $\beta_{1/2}=\beta'_t=\Phi'\bullet\beta_0
=\Ad_{\Phi'}\beta_0-(\Phi')^*\theta^R$ and
$(\Phi''\Phi')^*\theta^L=(\Phi')^*\theta^L+\Ad_{(\Phi')^{-1}}(\Phi'')^*\theta^L$,
we are left with $\hh (\Phi')^*\theta^L\cdot (\Phi'')^*\theta^R$.

(c) is a straightforward calculation. 
\end{proof}
\end{appendix}

\def\cprime{$'$} \def\polhk#1{\setbox0=\hbox{#1}{\ooalign{\hidewidth
  \lower1.5ex\hbox{`}\hidewidth\crcr\unhbox0}}} \def\cprime{$'$}
  \def\cprime{$'$} \def\polhk#1{\setbox0=\hbox{#1}{\ooalign{\hidewidth
  \lower1.5ex\hbox{`}\hidewidth\crcr\unhbox0}}} \def\cprime{$'$}
  \def\cprime{$'$}
\providecommand{\bysame}{\leavevmode\hbox to3em{\hrulefill}\thinspace}
\providecommand{\MR}{\relax\ifhmode\unskip\space\fi MR }
\providecommand{\MRhref}[2]{%
  \href{http://www.ams.org/mathscinet-getitem?mr=#1}{#2}
}
\providecommand{\href}[2]{#2}

\end{document}